\documentclass[reqno]{amsart}
\usepackage{amssymb,amsmath,hyperref}
\usepackage{amsrefs, float}
\usepackage[foot]{amsaddr}
\usepackage{bbold,stackrel, multicol}
\usepackage{mathrsfs}

%%%%%%%%%%%%%%%%%%%%%%%%
% Dag Normann LaTeX definitions

\synctex=1
\newcommand{\Z}{{\textsf{\textup{Z}}}}

%%%%%%%%%%%%%%%%%%%%%%%%%

%\usepackage[utf8]{inputenc}
%\usepackage[russian]{babel}

\newtheorem{thm}{Theorem}

\newtheorem{cor}[thm]{Corollary}
\newtheorem{defi}[thm]{Definition}
\newtheorem{rem}[thm]{Remark}
\newtheorem{nota}[thm]{Notation}

\newtheorem{princ}[thm]{Principle}

\newtheorem{ack}[thm]{Acknowledgement}

\newcommand\be{\begin{equation}}
\newcommand\ee{\end{equation}} 

\usepackage{amsmath,amsfonts} 

\usepackage[applemac]{inputenc}

\def\bdefi{\begin{defi}}
\def\edefi{\end{defi}}
\def\bnota{\begin{nota}\rm}
\def\enota{\end{nota}}
\def\FIVE{\Pi_{1}^{1}\text{-\textup{\textsf{CA}}}_{0}}
\def\SIX{\Pi_{2}^{1}\text{-\textsf{\textup{CA}}}_{0}}
\def\SIXk{\Pi_{k}^{1}\text{-\textsf{\textup{CA}}}_{0}}
\def\SIXko{\Pi_{k+1}^{1}\text{-\textsf{\textup{CA}}}_{0}}
\def\SIXK{\Pi_{k}^{1}\text{-\textsf{\textup{CA}}}_{0}^{\omega}}

\def\ATR{\textup{\textsf{ATR}}}

\def\ZFC{\textup{\textsf{ZFC}}}
\def\ZF{\textup{\textsf{ZF}}}

\def\L{\textsf{\textup{L}}}

\def\RCA{\textup{\textsf{RCA}}}
\def\({\textup{(}}
\def\){\textup{)}}

\def\RCAo{\textup{\textsf{RCA}}_{0}^{\omega}}
\def\ACAo{\textup{\textsf{ACA}}_{0}^{\omega}}

\def\WKL{\textup{\textsf{WKL}}}

\def\bye{\end{document}}
\def\N{{\mathbb  N}}
\def\Q{{\mathbb  Q}}
\def\R{{\mathbb  R}}
\def\NN{{\mathfrak  N}}
\def\NNN{{\mathcal  N}}

\def\SS{\textup{\textsf{S}}}

\def\di{\rightarrow}

\def\asa{\leftrightarrow}

\def\ACA{\textup{\textsf{ACA}}}

\def\QFAC{\textup{\textsf{QF-AC}}}

\def\SIND{\Sigma\textup{\textsf{-IND}}}

\def\open{\textup{\textsf{open}}}

\def\BOOT{\textup{\textsf{BOOT}}}

\def\IND{\textup{\textsf{IND}}}

\def\RANGE{\textup{\textsf{RANGE}}}

\def\fin{\textup{\textsf{fin}}}

\def\CAC{\textup{\textsf{CAC}}}

\def\CAC{\textup{\textsf{CAC}}}

\def\eps{\varepsilon}

\def\ADS{\textup{\textsf{ADS}}}

\usepackage{graphicx}
\usepackage{tikz}
\usetikzlibrary{matrix}
\usepackage{comment,tikz-cd}

\setcounter{tocdepth}{3}
\numberwithin{equation}{section}
\numberwithin{thm}{section}

\begin{document}
\title{Coding is hard}
%\author{Dag Normann}
%\address{Department of Mathematics, The University of Oslo, Norway, P.O. Box 1053, Blindern N-0316 Oslo}
%\email{dnormann@math.uio.no}
\author{Sam Sanders}
\address{Department of Philosophy II, RUB Bochum, Germany}
\email{sasander@me.com}
\keywords{higher-order arithmetic, second-order arithmetic, coding, representations, real analysis, Reverse Mathematics.}
\subjclass[2020]{03B30, 03F35}
\begin{abstract}
%Mathematical logic includes the study of mathematics via logical methods.  
A central topic in mathematical logic is the classification of theorems from mathematics in hierarchies according to their logical strength.  
Ideally, the place of a theorem in a hierarchy does not depend on the representation (aka coding) used.  
In this paper, we show that the standard representation of compact metric spaces in second-order arithmetic has a profound effect.  
To this end, we study basic theorems for such spaces like \emph{a continuous function has a supremum} and \emph{a countable set has measure zero}.  
We show that these and similar third-order statements imply at least Feferman's highly non-constructive \emph{projection principle}, and even full second-order arithmetic or countable choice in some cases.
When formulated with representations (aka codes), the associated second-order theorems are provable in rather weak fragments of second-order arithmetic.  
Thus, we arrive at the slogan that 
\begin{center}
\emph{coding compact metric spaces in the language of second-order arithmetic can be as hard as second-order arithmetic or countable choice}.
\end{center}
We believe every mathematician should be aware of this slogan, as central foundational topics in mathematics make use of the standard second-order representation of compact metric spaces. 
In the process of collecting evidence for the above slogan, we establish a number of equivalences involving Feferman's projection principle and countable choice.
We also study generalisations to fourth-order arithmetic and beyond with similar-but-stronger results.  
\end{abstract}

\setcounter{page}{0}
\tableofcontents
\thispagestyle{empty}
\newpage

\maketitle
\thispagestyle{empty}

\section{Introduction and preliminaries}
\subsection{Aim}
The study of compact metric spaces in mathematical logic generally proceeds via `representations' or `codes' in the language of second-order arithmetic, where the latter only has variables for natural numbers and sets of natural numbers.   
In this paper, we collect evidence for the following slogan: 
\begin{center}
\emph{coding compact metric spaces in the language of second-order arithmetic can be as hard as full second-order arithmetic or countable choice}.
\end{center}
In particular, we identify a number of well-known theorems about compact metric spaces that imply or are equivalent to strong axioms when formulated without codes/representations, sometimes even full second-order arithmetic and countable choice\footnote{The Axiom of Choice restricted to countable collections is not provable in $\ZF$, i.e.\ the usual foundations of mathematics $\ZFC$ without the Axiom of Choice (see e.g.\ \cite{heerlijkheid}).}.  
These basic statements include the supremum principle for (Lipschitz) continuous functions, that countable sets have measure zero, the intermediate value theorem, and even the existence of representations themselves via separability.
By contrast, when formulated with codes/representations, the associated second-order theorems can be proved in rather weak fragments of second-order arithmetic. 
Thus, our results provide an answer to to the following question by Friedman.
\begin{quote}
Are there any ordinary mathematical theorems involving third order
objects without presentations that have any inherent logical strength? (original in all-caps, FOM mailing list, \cite{friedfom2})
\end{quote}
We believe that every mathematician should be aware of the above centred slogan, as major foundational topics in mathematics, like \emph{Turing computability} (\cites{zweer, cooper1, recmath1, recmath2}), \emph{computable analysis} (\cites{brat, wierook, brapress, bratger}) and \emph{Reverse Mathematics} (\cites{simpson2, stillebron, damurm}) make essential use of second-order codes/representations.  The associated classification apparently depends greatly on this coding practise and `less coding-heavy' alternatives should be considered, in our humble opinion.   

\smallskip

A less foundational result in this paper is the observation that basic statements about compact metric spaces, like that a (Lipschitz) continuous function is bounded or the existence of discontinuous functions, are equivalent to a known\footnote{The fragment at hand is called $\QFAC^{0,1}$ and not provable in $\ZF$, i.e.\ the usual foundations of mathematics without the Axiom of Choice (see \cite{kohlenbach2} and Section \ref{lll}).} fragment of the Axiom of countable Choice, namely by Theorems \ref{NNZ} and \ref{zirf}.   

\smallskip

An important point regarding the above centred slogan is that the results in this paper are \emph{robust}, i.e.\ we still obtain second-order arithmetic or countable choice for many variations of the theorems at hand.  Robustness is studied as a property of logical systems in mathematical logic as follows in \cite{montahue}*{p.\ 432}.  
\begin{quote}
[\dots] one distinction that I think is worth making is the one between robust systems and non-robust systems. A system is robust if it is equivalent to small perturbations of itself. This is not a precise notion yet, but we can still recognize some robust systems. 
\end{quote}
Finally, the main aim of this paper is to identify basic third-order statements about compact metric spaces that imply strong axioms.  
Nonetheless, the associated proofs provide templates that yield similar results for fourth- and higher-order statements.  For completeness, we study
two examples in Section \ref{zonggg}, among which the intermediate value theorem on connected metric spaces.  

\smallskip

We provide a more detailed introduction in Section \ref{fintro} while Section \ref{pprel} lists required definitions and axioms.  
Our main results are in Sections \ref{main} and \ref{zonggg}.  
\subsection{Introduction}\label{fintro}
While set theory provides a foundation for most of mathematics, so-called ordinary mathematics is often studied using the more frugal system \emph{second-order arithmetic} $\Z_{2}$ (\cite{simpson2}*{I-II}).  
A central topic is to identify the minimal axioms needed to prove a given theorem of ordinary mathematics, i.e.\ which subsystem of $\Z_{2}$ suffices for a proof.  
This is the aim of \emph{Reverse Mathematics} (\cite{simpson2, stillebron, damurm}) where a major result is that the majority of theorems of ordinary mathematics are provable in rather weak fragments of $\Z_{2}$ carrying foundational import.   

\smallskip

Now, the state-of-the-art described in the previous paragraph is an instance of a central topic of mathematical logic, namely the classification of theorems from mathematics in hierarchies according to their logical strength.  
Ideally, the place of a theorem in these hierarchies does not depend too much on the representation used.  Since the language of $\Z_{2}$, called $\L_{2}$, only has variables for natural numbers $n\in \N$ and sets $X\subset \N$, higher-order objects have to be `represented' or `coded' via second-order objects.  Prominent third-order examples are continuous functions on the reals, open and closed sets of reals, and compact metric spaces.   

\smallskip

The goal of this paper is to identify basic third-order theorems that imply strong axioms, including full second-order arithmetic and countable choice.  
In particular, we study the following elementary statements for compact metric spaces $(M, d)$ where $M$ is a subset of the Baire space.
\begin{enumerate}
\item[(a)] A compact metric space $(M, d)$ is separable.  
\item[(b)] A countable set in a compact metric space $(M, d)$ has measure zero.  
\item[(c)] A (Lipschitz) continuous function on $(M, d)$ has a supremum.
\item[(d)] An open set in $(M, d)$ equals the countable union of open balls.
\end{enumerate}
When formulated with representations/codes, the associated second-order theorems are provable in rather weak fragments of second-order arithmetic (\cite{browner, simpson2}).  
By contrast, the third-order theorems in items (a)-(d) imply at least Feferman's highly non-constructive \emph{projection principle} (see \cite{littlefef} and Section \ref{mintro}) and even full second-order arithmetic or countable choice in some cases, as established in Section~\ref{main}.  
In light of the following quote by Friedman, it seems the aforementioned results are both novel and surprising. 
\begin{quote}
I have not seen any indication of an extension of say even $\ATR_{0}$ by
fundamental mathematical statements in types 1,2,3 in the classical
mathematical literature that carries any unusual logical strengths
like higher fragments of $\Z_2$ (added LaTeX notation, \cite{friedfom1}).
\end{quote}
To establish our results, we work in Kohlenbach's base theory $\RCAo$ of higher-order Reverse Mathematics (abbreviated RM), introduced in \cite{kohlenbach2}.  We assume familiarity with $\RCAo$ and the associated RM of real analysis (\cite{kohlenbach2}*{\S2} or \cite{dagsamXIV}).

\smallskip

To be absolutely clear, $\RCAo$ is a weak logical system that we assume as a `background theory'.  In the latter, we prove that the above statements (a)-(d) imply or are equivalent to strong axioms, including even second-order arithmetic $\Z_{2}$ and countable choice $\QFAC^{0,1}$ (see Section \ref{lll} for the latter).  Along the way, we will obtain a number of elegant equivalences for Feferman's projection principle and countable choice $\QFAC^{0,1}$ in Kohlenbach's higher-order RM.  

\smallskip

Finally, items (a)-(d) deal with compact metric spaces $(M, d)$ where $M$ is a subset of the Baire space, i.e.\ these can be formulated in the language of third-order arithmetic. 
As it turns out, the associated proofs provide a kind of template that yields similar results for fourth- and higher-order arithmetic.  
We sketch such results in Section \ref{zonggg}, for completeness, noting that the third-order case is vastly more natural from the point of view of ordinary mathematics.

\smallskip

In conclusion, the coding practise involving $\L_{2}$, the language of second-order arithmetic $\Z_{2}$, can have a tremendous influence on the logical strength of basic theorems.
The results in this paper are novel since we show that this logical strength can vary as much as $\Z_{2}$ itself, or even require countable choice.  
Moreover, items (a)-(d) are rather elementary and found in many an undergraduate curriculum or textbook.  
We believe that many variations on these results are possible and look forward to the associated exploration with collaborators. 

\subsection{Preliminaries}\label{pprel}
We introduce the mostly standard definition of metric space in higher-order arithmetic (Section \ref{mez}) and some required axioms (Section \ref{lll}).
We assume basic familiarity with the formalisation of the real numbers, which is the same in second- and higher-order RM (see \cite{simpson2}*{II.5} or \cite{kohlenbach2}*{\S2}).

\subsubsection{Metric spaces in higher-order order arithmetic}\label{mez}
We discuss the definition of metric space in higher-order arithmetic, as well as some related definitions.
We note that sets are given by characteristic functions, well-known from measure and probability theory, and second-order RM, as can be gleaned from \cite{simpson2}*{X.1.12}.

\smallskip

First of all, we shall study metric spaces $(M, d)$ as in Definition~\ref{donkc}.  We assume that $M$ comes with its own equivalence relation `$=_{M}$'.  
We recall that the study of metric spaces in second-order RM is at its core based on equivalence relations, as stated explicitly in the two textbooks on RM, namely \cite{simpson2}*{I.4} or \cite{damurm}*{\S10.1}.
Now, the notation `$F:M\di \R$' denotes a function $F$ from $M$ to the reals that satisfies the following instance of the axiom of function extensionality:
\be\tag{\textup{\textsf{E}}$_{M}$}\label{koooooo}
(\forall x, y\in M)(x=_{M}y\di F(x)=_{\R}F(y)).
\ee
In particular, each component of a metric $d:M^{2}\di \R$ is assumed to satisfy \eqref{koooooo}.  
%i.e.\ function extensionality relative to $M$.  
\bdefi\label{donkc}
A functional $d: M^{2}\di \R$ is a \emph{metric on $M$} if it satisfies the following properties for $x, y, z\in M$:
\begin{enumerate}
 \renewcommand{\theenumi}{\alph{enumi}}
\item $d(x, y)=_{\R}0 \asa  x=_{M}y$,
\item $0\leq_{\R} d(x, y)=_{\R}d(y, x), $
\item $d(x, y)\leq_{\R} d(x, z)+ d(z, y)$.
\end{enumerate}
\edefi 
We recall that compact metric spaces are at most the size of the set of reals by \cite{buko}*{Theorem~3.13}.
Motivated by this observation, we shall only study metric spaces $(M, d)$ with $M\subset \N^{\N}$ or $M\subset \R$, \emph{except} in Section \ref{zonggg}. 
In the latter, we show that if $M$ has a higher rank, the associated theorems are \emph{much} stronger. 

\smallskip

A subset $X\subset M$ is defined via its characteristic function $\mathbb{1}_{X}:M\di \{0, 1\}$, keeping in mind \eqref{koooooo}.  
Moreover, we use standard notation like $B_{d}^{M}(x, r)$ to denote the open ball $\{y\in M: d(x, y)<_{\R}r\}$, while $\overline{B}_{d}^{M}(x, r)$ is the associated closed ball.  
A set $O\subset M$ is open if for $x\in O$, there exists $N\in \N$ with $B_{d}^{M}(x, \frac{1}{2^{N}})\subset O$.    
A set $C\subset M$ is closed if the complement $M\setminus C$ is open. 

\smallskip

Secondly, the following definitions are mostly standard, where we note that a different nomenclature is sometimes used in the logical literature.
\bdefi[Compactness and around]\label{deaco} 
For a metric space $(M, d)$, we say that
\begin{itemize}
\item a set $X\subset M$ is \emph{finite} if there is $N\in \N$ such that for any pairwise different $x_{0}, \dots, x_{N}\in M$, there is $i\leq N$ with $x_{i}\not\in X$.  We then write $|X|\leq N$.
\item $(M, d)$ is \emph{countably-compact} if for any sequence $(O_{n})_{n\in \N}$ of open sets in $M$ such that $M\subset \cup_{n\in \N}O_{n}$, there is $m\in \N$ such that  $M\subset \cup_{n\leq m}O_{n}$,
\item $(M, d)$ is \(open-cover\) \emph{compact} in case for any $\Psi:M\di \R^{+}$, there are $x_{0}, \dots, x_{k}\in M$ such that $\cup_{i\leq k}B_{d}^{M}(x_{i}, \Psi(x_{i}))$ covers $M$, 
\item $(M, d)$ is \emph{sequentially compact} if any sequence has a convergent sub-sequence,
\item $(M, d)$ is \emph{limit point compact} if any infinite set in $M$ has a limit point,
\item a sequence $(x_{n})_{n\in \N}$ in $(M, d)$ is \emph{Cauchy} if $(\forall k\in \N)(\exists N\in \N)(\forall m, n\geq N)( d(x_{n}, x_{m})<\frac{1}{2^{k}})$,
\item $(M, d)$ is \emph{complete} in case every Cauchy sequence converges,
\item $(M, d)$ is \emph{totally bounded} if for all $k\in \N$, there are $x_{0}, \dots, x_{m}\in M$ such that $\cup_{i\leq m}B_{d}^{M}(x_{i}, \frac{1}{2^{k}})$ covers $M$,
\item $(M, d)$ is \emph{separable} if there is a sequence $(x_{n})_{n\in \N}$ in $M$ such that $(\forall x\in M, k\in \N)(\exists m\in \N  )(d(x, x_{m})<\frac{1}{2^{k}})$,
\item $(M, d)$ is \emph{bounded} if $(\exists N\in \N)(\forall x, y\in M)( d(x, y)\leq N)$,
\end{itemize}
\edefi
It is well-known that for metric spaces, the compactness notions are equivalent and that compact metric spaces are separable.  
We could study advanced notions like the Lindel\"of property, but stick with concepts studied in second-order RM. 

\subsubsection{Some axioms of higher-order arithmetic}\label{lll}
We introduce some higher-order axioms needed in the below.  We assume basic familiarity with Kohlenbach's {base theory} $\RCAo$ of higher-order RM (see \cite{kohlenbach2}*{\S2}).  

\smallskip
\noindent
First of all, the functional $E$ in $(\exists^{2})$ is also called \emph{Kleene's quantifier $\exists^{2}$}: 
\be\label{muk}\tag{$\exists^{2}$}
(\exists E:\N^{\N}\di \{0,1\} )(\forall f \in \N^{\N})\big[(\exists n\in \N)(f(n)=0) \asa E(f)=0    \big]. 
\ee
Related to $(\exists^{2})$, the functional $\mu^{2}$ in $(\mu^{2})$ is called \emph{Feferman's $\mu$} (see \cite{avi2}) and may be found -with this symbol- in Hilbert-Bernays' Grundlagen (\cite{hillebilly2}*{Supplement IV}):
\[
\mu(f):= 
\begin{cases}
n & \textup{if $n$ is the least natural number such that $f(n)=0$ }\\
0 & \textup{if $f(n)>0$ for all $n\in \N$}
\end{cases}
.
\]
We have $(\exists^{2})\asa (\mu^{2})$ over $\RCAo$ (see \cite{kohlenbach2}*{\S3}) and $\ACAo\equiv\RCAo+(\exists^{2})$ proves the same sentences as $\ACA_{0}$ by \cite{hunterphd}*{Theorem~2.5}. 

\smallskip

Secondly, the functional $\SS^{2}$ in $(\SS^{2})$ is called \emph{the Suslin functional} (\cite{kohlenbach2}):
\be\tag{$\SS^{2}$}
(\exists\SS^{2}:\N^{\N}\di \{0,1\})(\forall f \in \N^{\N})\big[  (\exists g\in \N^{\N})(\forall n \in \N)(f(\overline{g}n)=0)\asa \SS(f)=0  \big].
\ee
The system $\FIVE^{\omega}\equiv \RCAo+(\SS^{2})$ proves the same $\Pi_{3}^{1}$-sentences as $\FIVE$ by \cite{yamayamaharehare}*{Theorem 2.2}.   
By definition, the Suslin functional $\SS^{2}$ can decide whether a $\Sigma_{1}^{1}$-formula as in the left-hand side of $(\SS^{2})$ is true or false.   We similarly define the functional $\SS_{k}^{2}$ which decides the truth or falsity of $\Sigma_{k}^{1}$-formulas from $\L_{2}$; we also define 
the system $\SIXK$ as $\RCAo+(\SS_{k}^{2})$, where  $(\SS_{k}^{2})$ expresses that $\SS_{k}^{2}$ exists.  
We note that the operators $\nu_{n}$ from \cite{boekskeopendoen}*{p.\ 129} are essentially $\SS_{n}^{2}$ strengthened to return a witness (if existent) to the $\Sigma_{n}^{1}$-formula at hand.  %  if it exists. 

\smallskip

\noindent
Thirdly, full second-order arithmetic $\Z_{2}$ is readily derived from $\cup_{k}\SIXK$, or from:
\be\tag{$\exists^{3}$}
(\exists E:(\N^{\N}\di \N)\di \N)(\forall Y:\N^{\N}\di \N)\big[  (\exists f\in  \N^{\N})(Y(f)=0)\asa E(Y)=0  \big], 
\ee
and we therefore define $\Z_{2}^{\Omega}\equiv \RCAo+(\exists^{3})$ and $\Z_{2}^\omega\equiv \cup_{k}\SIXK$, which are conservative over $\Z_{2}$ by \cite{hunterphd}*{Cor.\ 2.6}. 
%Despite this close connection, $\Z_{2}^{\omega}$ and $\Z_{2}^{\Omega}$ can behave quite differently, as discussed in e.g.\ \cite{dagsamIII}*{\S2.2} and Section \ref{mintro}.   
The functional from $(\exists^{3})$ is also called `Kleene's quantifier $\exists^{3}$', and we use the same convention for other functionals.  

\smallskip

Fourth, many results in higher-order RM are established in $\RCAo$ plus the following special case of the Axiom of (countable) Choice (\cite{kohlenbach2, dagsamV}).
\begin{princ}[$\QFAC^{0,1}$] Let $\varphi$ be quantifier-free and such that $(\forall n\in \N)(\exists f\in \N^{\N})\varphi(f, n)$.  
Then there is a sequence $  (f_{n})_{n\in \N}$ in $\N^{\N}$ with $(\forall n\in \N)\varphi(f_{n}, n)$.
\end{princ}
The local equivalence between sequential and `epsilon-delta' continuity cannot be proved in $\ZF$, but can be established in $\RCAo+\QFAC^{0,1}$ (see \cite{kohlenbach2}).
Thus, it should not be a surprise that the latter system is often used as a base theory too.
Below, we show that $\QFAC^{0,1}$ is equivalent to various basic statements about compact metric spaces, including that continuous functions are bounded on the latter.  
By contrast, the aforementioned local equivalence deals with potentially \emph{discontinuous} functions on the reals.  

\smallskip

Finally, certain equivalences in second-order RM require extra induction beyond what is available in $\RCA_{0}$ (see \cite{neeman}). 
We sometimes use extra induction in higher-order RM, like the following fragment. 
\begin{princ}[$\Sigma$-$\IND$]
The induction axiom for $\varphi(n)$ defined as $(\exists f\in \N^{\N})(Y(f, n)=0)$ for any $Y^{2}$.  
\end{princ}
One readily\footnote{For finite $X\subset M$, let $\varphi(n)$ express that there is $w^{1^{*}}$ of length $|w|=n$ consisting of distinct elements of $X$.} shows that $\ACAo+\SIND$ proves that finite sets of reals can be enumerated as a finite sequence. 

\section{Main results}\label{main}
\subsection{Introduction}\label{mintro}
In this section, we establish the results sketched in Section \ref{fintro}, i.e.\ that basic theorems concerning compact metric spaces imply or are equivalent to strong axioms, including Feferman's projection principle, $\Z_{2}$, and countable choice.  

\smallskip

First of all, Feferman introduces the `projection principle' \textsf{Proj}$_{1}$ in \cite{littlefef}*{\S5} as a third-order version of Kleene's quantifier $(\exists^{3})$ from Section \ref{lll}.  Working over a base theory akin to $\ACAo$, it is then shown that \textsf{Proj}$_{1}$
implies various well-known theorems of analysis, like the supremum principle.  Moreover, \textsf{Proj}$_{1}$ also yields $\Z_{2}$ when combined with $(\mu^{2})$.  Thus, \textsf{Proj1} can be said to be impredicative and highly non-constructive.   
Now, Feferman's language is slightly richer than that of $\ACAo$ and the following axiom constitutes the higher-order RM version of \textsf{Proj}$_{1}$:
\be\tag{$\BOOT$}
(\forall Y:\N^{\N}\di \N)(\exists X\subset \N)(\forall n\in \N  )[ n\in X\asa (\exists f\in \N^{\N})(Y(f, n)=0) ].
\ee
The name refers to the verb `to bootstrap' as $\SIXK$ plus $\BOOT$ proves $\SIXko$.  Convergence theorems for nets are equivalent to $\BOOT$, as well as the supremum principle for certain weak continuity notions (\cites{samhabil, samRMDECO}).
%By \cite{dagsamVII}*{Theorem 3.15}, $\BOOT$ implies $\open$, which we believe is a strict implication.

\smallskip

Secondly, we shall derive $\BOOT+\QFAC^{0,1}$ from the following basic third-order statements and variations for $M\subset \N^{\N}$, sometimes even obtaining equivalences.  
\begin{itemize}
\item A compact metric space is separable (Section \ref{BS}).
\item In a compact metric space, a countable set has measure zero (Section \ref{BS2}).
\item A Lipschitz continuous function on a compact metric space $(M, d)$ has a supremum (Section \ref{BSS}).
\item An open set in a compact metric space $(M, d)$ equals the countable union of open balls (Section \ref{sleepwithoneeye}).
\end{itemize}
The third item generalised to $M\subset \N^{\N}\times \N^{\N}$ implies $\Z_{2}$ by Corollary \ref{NNcozz}.  Our formulations of the supremum principle 
can be found throughout the literature, as evidenced by exact references in Section \ref{BSS}.     

\smallskip

Finally, we shall work over $\ACAo$ for convenience, but could in principle obtain most results over $\RCAo$ or $\RCAo+\WKL_{0}$ using the following trick.
\begin{rem}[On the law of excluded middle]\label{LEM}\rm
Our starting point is Kleene's arithmetical quantifier $(\exists^{2})$ from Section \ref{lll}.
By \cite{kohlenbach2}*{Prop.\ 3.12}, $(\exists^{2})$ is equivalent over $\RCAo$ to the statement 
\begin{center}
\emph{There exists an $\R\di \R$-function that is not continuous.}
\end{center}
Clearly, $\neg(\exists^{2})$ is then equivalent to \emph{Brouwer's theorem}, i.e.\ the statement that all $\R\di \R$-functions are continuous. 
Now, if we wish to prove a given statement $T$ of real analysis about possibly discontinuous functions in $\RCAo+\WKL_{0}$, we may invoke the law of excluded middle as in $(\exists^{2})\vee \neg(\exists^{2})$.  
We can then split the proof of $T$ in two cases: one assuming $\neg(\exists^{2})$ and one assuming $(\exists^{2})$.  
In the latter case, since $(\exists^{2})\di\ACA_{0}$, we have access to much more powerful tools (than just $\WKL_{0}$).  
In the former case, since $\neg(\exists^{2})$ implies that all functions are continuous, we only need to establish $T$ restricted to the special case of continuous functions.  
Moreover, we can use $\WKL_{0}$ to provide codes for all (continuous) functions (see \cite{dagsamXIV}*{\S2}).  After that, we can use the second-order RM literature to establish $T$ restricted to codes for continuous functions, and hence $T$. 
To be absolutely clear, the `law of excluded middle trick' is the above splitting of proofs based on $(\exists^{2})\vee \neg(\exists^{2})$.
\end{rem}

\subsection{Metric spaces and separability}\label{BS}
We derive Feferman's projection principle $\BOOT$ from the basic statement \emph{a compact metric space is separable}.  
The latter expresses that the metric space at hand has a second-order code/representation, following the coding of metric spaces in second-order RM (\cites{simpson2, browner}).
We also derive countable choice as in $\QFAC^{0,1}$ from the aforementioned statement and even obtain an elegant equivalence involving boundedness.  

\smallskip  

Secondly, we have the following theorem where the first item is a special case of \cite{browner}*{Theorem 3.14.ii or 3.17.ii} without codes.
%The second item is \cite{simpson2}*{IV.1.6} or \cite{damu}*{10.5.4} formulated without codes.
As an aside, limit point compactness goes back to Weierstrass, according to Jordan (see \cite{jordel2}*{p.\ 73}).
\begin{thm}[$\ACAo+\SIND$]\label{NN}
The following statement implies $\BOOT$ and $\QFAC^{0,1}$.
\begin{itemize}
\item For any metric $d:(\N^{\N}\times \N^{\N})\di \R$ and metric space $(\N^{\N}, d)$, sequential compactness implies separability. 
\end{itemize}
We can replace `sequential' by `limit point' or `countable' or `open cover'.  
\end{thm}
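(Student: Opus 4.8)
The plan is to apply the separability statement to two explicitly constructed spaces $(\N^{\N},d)$: one whose separating sequence encodes the projection set witnessing an instance of $\BOOT$, and one whose separating sequence directly yields the witnesses for an instance of $\QFAC^{0,1}$. In both cases the underlying set is $\N^{\N}$ equipped with a tailor-made equivalence relation $=_{M}$ and a metric $d$ that collapses it isometrically onto a subspace of the standard convergent sequence $\{0\}\cup\{1/2^{n}:n\in\N\}\subseteq\R$. The crucial feature is that such a space is provably sequentially compact already over $\ACAo$ (via Bolzano--Weierstrass on $[0,1]$, available since $(\exists^{2})$ gives $\ACA_{0}$), so that the hypothesis of the separability statement is discharged without circularity, whereas \emph{producing} a dense sequence turns out to carry genuinely stronger information.

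For $\BOOT$, fix $Y:\N^{\N}\di\N$; I want the set $X=\{n:(\exists f)(Y(f,n)=0)\}$. Define a label $c(f)$ by: $c(f)=0$ whenever $f(0)=0$; and, writing $f(0)=n+1$ and letting $g$ be the tail of $f$, set $c(f)=\langle n,1\rangle$ if $Y(g,n)=0$ and $c(f)=0$ otherwise. Put $f=_{M}h\leftrightarrow c(f)=c(h)$ and $d(f,h)=|\phi(c(f))-\phi(c(h))|$, where $\phi(0)=0$ and $\phi(\langle n,1\rangle)=1/2^{n+1}$. Since $Y$ is given, $c$ and hence $d$ are computable with $(\exists^{2})$, and $d$ respects $=_{M}$, so \eqref{koooooo} holds and $(\N^{\N},d)$ is isometric to $M:=\{0\}\cup\{1/2^{n+1}:(\exists f)(Y(f,n)=0)\}$. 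The clause $f(0)=0$ guarantees that the limit point $0$ always lies in $M$, so $M$ is a closed, hence compact, subset of $[0,1]$, and sequential compactness follows by Bolzano--Weierstrass. Applying the hypothesis yields a dense sequence $(x_{m})$. Each present point $1/2^{n+1}$ is isolated, its nearest neighbour being at distance $\geq 1/2^{n+2}$, so density at precision $1/2^{n+3}$ forces some $x_{m}$ with $c(x_{m})=\langle n,1\rangle$; conversely such an $x_{m}$ exhibits a witness. Thus $X=\{n:(\exists m)(c(x_{m})=\langle n,1\rangle)\}$ is arithmetical in $(x_{m})$ and $Y$, exists by $\ACA_{0}$, and is exactly the set required by $\BOOT$.

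For $\QFAC^{0,1}$, let $\varphi$ be quantifier-free with $(\forall n)(\exists f)\varphi(f,n)$; since $\varphi$ is decidable under $(\exists^{2})$, I repeat the construction with $Y$ replaced by a code for $\varphi$. Now the hypothesis $(\forall n)(\exists f)\varphi(f,n)$ makes every point $1/2^{n+1}$ present, so $M=\{0\}\cup\{1/2^{n+1}:n\in\N\}$ is the full standard convergent sequence, manifestly sequentially compact. A dense sequence $(x_{m})$ meets, by the same isolation argument, the $=_{M}$-class of each $1/2^{n+1}$; letting $m(n)$ be the least $m$ with $c(x_{m})=\langle n,1\rangle$ (found by unbounded search using $(\exists^{2})$) and $f_{n}$ the tail of $x_{m(n)}$, one obtains $\varphi(f_{n},n)$ for all $n$, and the sequence $(f_{n})_{n\in\N}$ exists by arithmetical comprehension. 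This is precisely the conclusion of $\QFAC^{0,1}$.

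For the replacements, I would verify directly that the fixed spaces above are also limit-point, countably, and open-cover compact: any infinite subset consists of infinitely many isolated points $1/2^{n+1}$ accumulating at $0$, and any open (or $\Psi$-ball) cover contains a neighbourhood of $0$ omitting only finitely many points, each of which is separately covered. It is here that $\SIND$ enters, exactly as in the footnote observation that $\ACAo+\SIND$ enumerates finite sets of reals: one must pass between the `bounded size' notion of finiteness in Definition~\ref{deaco} and an actual finite enumeration in order to assemble the finite subcover. I expect the main obstacle to be this balancing act within the base theory, namely rigging $d$ so that compactness is \emph{provable} in $\ACAo+\SIND$ while separability is forced to carry the function-quantifier information $(\exists f)(\dots)$; the extensionality requirement \eqref{koooooo}, together with the need to keep every realized point isolated and the limit point $0$ always present, is what pins down the precise form of $c$ and $d$.
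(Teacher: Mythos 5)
Your proposal is correct and takes essentially the same approach as the paper's proof: both collapse $\N^{\N}$ via a tailor-made metric onto a subspace of $\{0\}\cup\{2^{-n}:n\in\N\}$ whose isolated points record $(\exists f)(Y(f,n)=0)$, prove sequential compactness (and, via $\SIND$-enumeration of the finitely many exceptional points, the other compactness notions) outright in the base theory, and then read off the $\BOOT$-set and the $\QFAC^{0,1}$ witnesses from the dense sequence by the isolation argument. The only differences are cosmetic: the paper routes $\BOOT$ through the equivalent principle $\RANGE$ and keeps a singleton equivalence class at the limit point (hence its extra disjunct $G(00\dots)=n$ in \eqref{zeng}), whereas you perform that reduction inline via the label function $c$ and collapse all non-witnesses into the limit class, and you invoke Bolzano--Weierstrass where the paper argues sequential compactness by direct case distinction.
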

\begin{proof}
The principle $\BOOT$ is equivalent to $\RANGE$ as follows:
\be\label{myhunt}\tag{$\RANGE$}
(\forall G:\N^{\N}\di \N)(\exists X\subset \N)(\forall n \in \N)\big[n\in X\asa (\exists f\in \N^{\N})(G(f)=n)  ].
\ee
Indeed, the forward direction is immediate, while for the reverse direction, define $G^{2}$ as follows for $n^{0}$ and $g^{1}$: put $G(\langle n\rangle *g)=n+1$ if $Y(g, n)=0$, and $0$ otherwise. 
Let $X\subseteq \N$ be as in $\RANGE$ and note that 
\[
(\forall m \geq 1 )( m\in X \asa (\exists f\in \N^{\N})(G(f)=m)\asa (\exists g\in \N^{\N})(Y(g, m-1)=0)  ),
\]
which is as required for $\BOOT$ after trivial modification. 

\smallskip

Now fix $G:\N^{\N}\di \N$ and define $d:(\N^{\N}\times \N^{\N})\di \R$ as follows:  $d(f, g):=|\frac{1}{2^{G(f)}}-\frac{1}{2^{G(g)}}|$ for $f, g\ne 00\dots$, $d(00\dots, f)=d(f, 00\dots):= \frac{1}{2^{G(f)}} $ for $f\ne 00\dots$, and $d(00\dots, 00\dots):=0$. 
Then $(M, d)$ is a metric space for $M=\N^{\N}$ if we define the equivalence relation `$=_{M}$' as follows:
\be\label{bookana}
f=_{M}g\equiv\big[ [f,g\ne 00\dots \wedge G(f)=G(g)] \vee f=g=00\dots\big].  
\ee
Indeed, the first two items of Definition \ref{donkc} hold by definition while for the third item, observe that for $f, g\ne 00\dots$, we have 
\[\textstyle
d(f, g)=|\frac{1}{2^{G(f)}}-\frac{1}{2^{G(g)}}|\leq |\frac{1}{2^{G(f)}}-\frac{1}{2^{G(h)}}|+ |\frac{1}{2^{G(h)}}-\frac{1}{2^{G(g)}}| = d(f, h)+d(h, g), 
\]
for $h\ne 00\dots$;  also $d(f, g)=|\frac{1}{2^{G(f)}}-\frac{1}{2^{G(g)}}|\leq \frac{1}{2^{G(f)}}+\frac{1}{2^{G(g)}} = d(f, 00\dots)+d(00\dots, g)$.  
Moreover, $d(f, 00\dots)=\frac{1}{2^{G(f)}}\leq \frac{1}{2^{G(g)}} + |\frac{1}{2^{G(f)}}-\frac{1}{2^{G(g)}}| =d(g, 00\dots)+d(f, g)$ for $g\ne 00\dots$; we also have $0=d(00\dots, 00\dots)\leq d(00\dots, f)+d(f, 00\dots)$ for any $f\in \N^{\N}$, i.e.\ Definition \ref{donkc} is satisfied.

\smallskip

To show that $(M, d)$ is sequentially compact, let $(f_{n})_{n\in \N}$ be a sequence in $M$.  In case $00\dots$ occurs infinitely many times or if $(\exists m\in \N)(\forall n\in \N)(G(f_{n})\leq m)$, there is a trivial constant sub-sequence.  
In case $(\forall m\in \N)(\exists n\in \N)(G(f_{n})> m)$, let $(g_{n})_{n\in \N}$ be a sub-sequence with $G(g_{n})>n$ for all $n\in \N$.  Clearly, $(g_{n})_{n\in \N}$ converges to $00\dots$ in $(M, d)$.

\smallskip

To show that $(M, d)$ is (countably) compact, note that any open ball containing $00\dots$ covers all but finitely elements of $M$. 
Here, $\SIND$ seems needed to enumerate the latter and obtain a finite sub-covering.
Similarly, for an infinite set $X\subset M$, $00\dots $ is a limit point of $X$.

\smallskip

Regarding the first item, let $(h_{m})_{m\in \N}$ be a sequence as provided by the separability of $(M, d)$, i.e.\ $(\forall f\in M, k\in \N)(\exists m\in \N)(d(f, h_{m})<\frac{1}{2^{k}})$.  
Then we have, for any $n\in \N$ that
\be\label{zeng}
(\exists f\in \N^{\N})(G(f)=n) \asa [ G(00\dots)=n \vee (\exists m\in \N)(G(h_{m})=n) ].
\ee
Since the right-hand side of \eqref{zeng} is arithmetical, $\RANGE$ follows from the first item.  To obtain $\QFAC^{0,1}$, we recall that $\BOOT\asa \RANGE$ as established in the first paragraph.
By the latter, it suffices to establish that for $G^{2}$, $(\forall n\in \N)(\exists f\in \N^{\N})(G(f)=n)$ implies $(\exists (f_{n})_{n\in \N})(\forall n\in \N)(G(f_{n})=n)$.
However, this readily follows from \eqref{zeng}. 
\end{proof}
Clearly, the metric space $(M, d)$ from the previous proof has a nice order structure, i.e.\ we could restrict to certain ordered metric spaces.  

\smallskip

There is nothing special about Baire space in Theorem \ref{NN}, by the following. 
\begin{cor}[$\ACAo+\SIND$]\label{NNcor}
The following item implies $\BOOT$.
\begin{itemize}
%\item For any $M\subseteq [0,1]$ forming a metric space $(M, | \cdot |_{\R})$, sequential compactness implies separability. 
%\item For any $M\subseteq [0,1]$ and metric space $(M, \| \cdot \|_{\R})$, countable compactness implies separability.  
\item For any metric $d:[0,1]^{2}\di \R$ and metric space $([0,1], d)$, sequential compactness implies separability. 
\end{itemize}
We can replace `sequential' by `limit point' or `countable' or `open cover'.  % assuming $\QFAC^{0,1}$.
\end{cor}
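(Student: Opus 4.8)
The plan is to transcribe the proof of Theorem~\ref{NN} almost verbatim, the only genuinely new ingredient being a surjection from $[0,1]$ onto Baire space that lets us replay the Baire-space construction. Recall from the first paragraph of that proof that $\BOOT \asa \RANGE$, so it suffices to establish $\RANGE$. I therefore fix an arbitrary $G:\N^{\N}\di \N$; the goal is to produce $X\subset \N$ with $n\in X \asa (\exists f\in \N^{\N})(G(f)=n)$ for every $n$.

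First I would use $(\exists^{2})$, available since we work over $\ACAo$, to fix a surjection $\Phi:[0,1]\di \N^{\N}$ that is extensional with respect to $=_{\R}$. Concretely, one may send each irrational in $[0,1]$ to its continued-fraction expansion (a bijection onto $\N^{\N}$ after the obvious reindexing) and every rational to $00\dots$; deciding rationality and computing the expansion digit-by-digit is routine once $(\exists^{2})$ (equivalently $\mu^{2}$) is on hand, as real equality and the relevant arithmetical conditions then become decidable. I then set $H:=G\circ \Phi:[0,1]\di \N$, so that $\range(H)=\range(G)$ by surjectivity of $\Phi$.

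Next I would equip $[0,1]$ with exactly the metric of Theorem~\ref{NN}, now driven by $H$ and with the real $0$ playing the role of $00\dots$: put $d(x,y):=|\frac{1}{2^{H(x)}}-\frac{1}{2^{H(y)}}|$ for $x,y\ne 0$, put $d(0,x)=d(x,0):=\frac{1}{2^{H(x)}}$ for $x\ne 0$, and $d(0,0):=0$, together with the equivalence relation $x=_{[0,1]}y \equiv [x,y\ne 0\wedge H(x)=H(y)]\vee [x=y=0]$ in analogy with \eqref{bookana}. The metric axioms hold by the same computation as in Theorem~\ref{NN}, and the space is sequentially compact by the same dichotomy: a sequence on which $H$ stays bounded has a subsequence constant modulo $=_{[0,1]}$, whereas if $H$ is unbounded along the sequence one extracts a subsequence converging to $0$. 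The variants for limit-point, countable, and open-cover compactness transfer likewise, with $\SIND$ used exactly as before to pass to finite subcoverings.

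Finally, feeding this space to the hypothesis yields a separating sequence $(h_{m})_{m\in\N}$, and the argument around \eqref{zeng} goes through unchanged with $H$ in place of $G$: for each $n$ one obtains $n\in\range(G)\asa [H(0)=n \vee (\exists m\in\N)(H(h_{m})=n)]$, whose right-hand side is arithmetical, giving $\RANGE$ and hence $\BOOT$. The one genuinely new step, and thus the main obstacle, is the construction of the surjection $\Phi$ and the verification that it is a bona fide extensional functional in $\ACAo$; everything downstream is a mechanical reprise of Theorem~\ref{NN}, which is precisely the sense in which there is nothing special about Baire space.
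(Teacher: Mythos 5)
Your proposal is correct, and it is in essence the route the paper itself gestures at: the paper's proof of Corollary~\ref{NNcor} ends by remarking that ``one could also use the previous observations to define a mapping from $[0,1]$ to $\N^{\N}$ and (mostly) copy the proof of Theorem \ref{NN}'', which is exactly your plan. The only difference worth noting is in emphasis and in the coding device. The paper's primary argument does not construct a surjection at all: it uses $(\exists^{2})$ to pass between reals and binary representations (and between elements of Baire space and their graphs, coded into Cantor space) in order to show that $\BOOT$ is \emph{equivalent} to its $[0,1]$-version, i.e.\ the statement that for any $f:(\R\times\N)\di\N$ there is $X\subset\N$ with $(\exists x\in[0,1])(f(x,n)=0)\asa n\in X$; it then reruns the construction of Theorem~\ref{NN} with $[0,1]$ in place of $\N^{\N}$ throughout, so no explicit map between the two spaces is ever needed. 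You instead keep $\RANGE$ over Baire space and localise all the coding in one functional, the continued-fraction surjection $\Phi:[0,1]\di\N^{\N}$, pulling $G$ back along it; this makes the replay of Theorem~\ref{NN} genuinely verbatim (your analogue of \eqref{zeng} goes through word for word), at the cost of having to verify that $\Phi$ is total, surjective and extensional in $\ACAo$ -- which is indeed routine with $(\exists^{2})$, since each continued-fraction digit of an irrational is arithmetically definable from the real (e.g.\ via the integer recursion for convergents), rationality is decidable by $\mu^{2}$, and the digits depend only on the real and not on its Cauchy representation. Both routes carry the same logical content; the paper's reformulation is slightly more economical, yours is slightly more modular.
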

\begin{proof}
First of all, we note (the well-known fact) that $\exists^{2}$ can convert a real $x\in [0,1]$ to a unique\footnote{In case of two binary representations, chose the one with a tail of zeros.} binary representation.  
Similarly, an element of Baire space $f\in \N^{\N}$ can be identified with its graph as a subset of $\N\times \N$, which can be represented as an element of Cantor space.  
Thus, $\BOOT$ is equivalent to the statement that for any $f:(\R\times \N)\di \N$, there is $X\subset \N$ such that for all $n\in \N$:
\be\label{alti2}
(\exists x\in [0,1])(f(x,n)=0)\asa n\in X.
\ee
The proof of Theorem \ref{NN} now goes through for `$\N^{\N}$' replaced by `$[0,1]$' everywhere. 
One could also use the previous observations to define a mapping from $[0,1]$ to $\N^{\N}$ and (mostly) copy the proof of Theorem \ref{NN}.  
\end{proof}
One could obtain $\QFAC^{0,1}$ in the previous corollary, as for the theorem.

\smallskip

Finally, that the centred statement in Theorem \ref{NN} implies $\QFAC^{0,1}$ is perhaps not that surprising due to the `sequential' nature of separability. 
By the following theorem, the much weaker property of boundedness suffices to obtain $\QFAC^{0,1}$. 
We seem to need $\QFAC^{0,1}_{\fin}$ as in item (b.1) which follows from the induction axiom.  Fragments of the latter are used in second-order RM in an essential way (\cite{neeman}).
\begin{thm}[$\ACAo$]\label{NNZ}
The following statements are equivalent.
\begin{enumerate}
\renewcommand{\theenumi}{\alph{enumi}}
\item The axiom of countable choice as in $\QFAC^{0,1}$.
\item The combination of the following.
\begin{itemize} 
\item[(b.1)] The axiom $\QFAC^{0,1}_{\fin}$: for quantifier-free $\varphi$ such that $(\forall n\in \N)(\exists f\in \N^{\N})\varphi(f, n)$, we have $(\forall k\in  \N)(\exists (f_{n})_{n\in \N})(\forall n\leq k)\varphi(f_{n}, n)$.
%\item For any $M\subseteq [0,1]$ forming a metric space $(M, | \cdot |_{\R})$, sequential compactness implies separability. 
%\item For any $M\subseteq [0,1]$ and metric space $(M, \| \cdot \|_{\R})$, countable compactness implies separability.  
\item[(b.2)] For any metric $d:(\N^{\N}\times \N^{\N})\di \R$ and metric space $(M, d)$ with $M\subset \N^{\N}$, sequential compactness implies boundedness. 
%\item For any metric $d:(\N^{\N}\times \N^{\N})\di \R$ and metric space $(M, d)$ with $M\subset \N^{\N}$, sequential compactness implies total boundedness. 
\end{itemize}
\item The combination of the following.
\begin{itemize} 
\item[(c.1)] The axiom $\QFAC^{0,1}_{\fin}$ from item \textup{(b.1)}.
\item[(c.2)] For any metric $d:(\N^{\N}\times \N^{\N})\di \R$ and metric space $(M, d)$ with $M\subset \N^{\N}$, sequential compactness implies countable compactness. 
%\item For any metric $d:(\N^{\N}\times \N^{\N})\di \R$ and metric space $(M, d)$ with $M\subset \N^{\N}$, sequential compactness implies total boundedness. 
\end{itemize}
\end{enumerate}
%We can replace `sequential' by `limit point' or `countable' or `open cover'.  % assuming $\QFAC^{0,1}$.
%We can replace `separability' in the second item by `total boundedness', assuming additionally $\QFAC^{0,1}$???  
\end{thm}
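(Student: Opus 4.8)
The plan is to establish the cycle (a) $\Rightarrow$ (c) $\Rightarrow$ (b) $\Rightarrow$ (a), where the first two implications are routine and the genuine work sits in the last one. For (a) $\Rightarrow$ (c): item (c.1) is a special case of $\QFAC^{0,1}$, so only (c.2) needs an argument. Given a sequentially compact $(M,d)$ and a countable open cover $(O_n)_{n}$ with no finite subcover, each set $M\setminus \bigcup_{n\le m}O_n$ is inhabited; since membership in the $O_n$ is decidable via $\exists^{2}$, this is a $\QFAC^{0,1}$-instance producing a sequence $(x_m)_m$ with $x_m\notin \bigcup_{n\le m}O_n$. A convergent subsequence $x_{m_k}\di x^\ast$ lands in some $O_N$; openness of $O_N$ forces $x_{m_k}\in O_N$ for large $k$, contradicting $x_{m_k}\notin O_N$ once $m_k\ge N$.

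For (c) $\Rightarrow$ (b): again (c.1) is literally (b.1), so it remains to derive (b.2) from (c.2). I first note that (b.1) supplies a point $p\in M$ whenever $M$ is inhabited, by applying it at $k=0$ to the formula $\mathbb{1}_{M}(f)=1$. Then the nested balls $O_k:=B_{d}^{M}(p,k)$ form a countable open cover, and countable compactness — which (c.2) hands us from sequential compactness — collapses it to a single $O_m$, i.e.\ $d(p,\cdot)\le m$ throughout $M$, whence $d\le 2m$ everywhere. All of this goes through in $\ACAo$.

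The heart of the matter is (b) $\Rightarrow$ (a). Fix quantifier-free $\varphi$ with $(\forall n)(\exists f)\varphi(f,n)$ and suppose, for contradiction, no choice sequence exists. The key device is a \emph{cumulative} reformulation: let $\psi(F,n)$ assert that $F$ codes a finite sequence $\langle f_0,\dots,f_n\rangle$ with $\varphi(f_i,i)$ for all $i\le n$. By (b.1) we have $(\forall n)(\exists F)\psi(F,n)$, and crucially a single witness $\psi(F,n)$ already exhibits $\varphi$-witnesses for every $i\le n$. Encode this by $G:\N^{\N}\di\N$ with $G(x)=n+1$ when $x=\langle n\rangle\ast F$ and $\psi(F,n)$, and $G(x)=0$ otherwise; by (b.1) the range of $G$ is all of $\N$. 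Put $d(x,y):=|G(x)-G(y)|$, a metric for the relation $x=_{M}y:\equiv G(x)=G(y)$, which respects \eqref{koooooo} by construction. This $(M,d)$ with $M=\N^{\N}$ is unbounded (use $G$-values $0$ and $N+1$) and, via the balls $\{x:G(x)<k\}$, visibly fails to be countably compact. Yet under our assumption it \emph{is} sequentially compact: for any sequence $(x_n)_n$, the naturals $G(x_n)$ either take infinitely many values — in which case a single $\mu/\exists^{2}$-search manufactures a full $\varphi$-choice sequence, against our assumption — or take finitely many, so one value repeats infinitely often and the corresponding $=_M$-constant, hence convergent, subsequence is extracted in $\ACAo$. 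Thus $(M,d)$ is sequentially compact but unbounded and not countably compact, contradicting (b.2), and equally (c.2), which yields the alternative route (c) $\Rightarrow$ (a).

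The step I expect to be delicate is exactly the design of this encoding. A naive one-witness-per-level space admits \emph{escaping} sequences that climb to infinity through a sparse set of levels without ever assembling a choice function, so sequential compactness cannot be salvaged from the mere failure of choice. The cumulative $\psi$ is what defeats this: it forces one large value of $G$ to carry all earlier witnesses, so that escaping even along a sparse infinite set of levels is already as strong as full countable choice. Securing this robustness with nothing beyond $\ACAo$ — in particular without appealing to $\SIND$, in contrast to Theorem \ref{NN} — is the crux of the argument.
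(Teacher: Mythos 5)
Your proposal is correct and its core is essentially the paper's own proof: the cumulative-witness construction (each point at level $n$ carries $\varphi$-witnesses for \emph{all} $i\le n$, with the metric given by the difference of levels), the use of $\QFAC^{0,1}_{\fin}$ to populate every level, and the sequential-compactness dichotomy (unbounded levels yield a full choice sequence by an arithmetical $\mu$-search, bounded levels yield an $=_{M}$-constant subsequence by pigeonhole) all match the paper's argument for the reversal, and your $G$-based metric on all of $\N^{\N}$ is only a cosmetic variant of the paper's restriction of $M$ to witness codes with $d(f,g)=|f(0)-g(0)|$. The sole difference is organizational: the paper derives (b.2) and (c.2) separately from $\QFAC^{0,1}$ and then refutes both at once with the single counterexample, whereas you run a cycle (a) $\Rightarrow$ (c) $\Rightarrow$ (b) $\Rightarrow$ (a) using the classical nested-ball argument for ``countably compact implies bounded'' in the step (c) $\Rightarrow$ (b); both organizations are sound over $\ACAo$.
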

\begin{proof}
First of all, apply $\QFAC^{0,1}$ to `$(M, d)$ is unbounded' and note that the resulting sequence yields a sequence $(x_{n})_{n\in \N}$ such that $d(x_{0}, x_{n})>n$ for all $n\geq 1$.
Clearly, such a sequence cannot have a convergent sub-sequence, as required for item (b.2).  Of course, $\QFAC^{0,1}$ implies $\QFAC^{0,1}_{\fin}$ as in item (b.1).  
Similarly, for a countable covering $(O_{n})_{n\in \N}$, applying $\QFAC^{0,1}$ to `$(\forall n\in \N)(\exists x\in M)(x\not \in \cup_{m\leq n}O_{n})$', one obtains a sequence 
that cannot have a convergent sub-sequence, i.e.\ item (c.2) also follows from $\QFAC^{0,1}$.  

\smallskip

For the remaining implications, assume $\QFAC^{0,1}$ is false, i.e.\ there is quantifier-free $\varphi$ with $(\forall n\in \N)(\exists f\in \N^{\N})\varphi(f, n)$, but there is no sequence $(f_{n})_{n\in \N}$ with $(\forall n\in \N)\varphi(f_{n}, n)$.
%, i.e.\ $\QFAC^{0,1}$ is false.  
Now define $f\in M$ in case $f(0)=n_{0}$ and $f=\langle n_{0}\rangle*g$ with $g=g_{0}\oplus g_{1}\oplus \dots \oplus g_{n_{0}}$ and $(\forall i\leq n_{0})(\varphi(g_{i}, i))$.  We put $f=_{M}g$ in case $f(0)=g(0)$ and define $d:(\N^{\N}\times \N^{\N})\di \R$ as $d(f, g)=|f(0)-g(0)|$.  Then $(M, d)$ is readily seen to be an unbounded and not countably compact metric space, using $\QFAC^{0,1}_{\fin}$.  To show that $(M, d)$ is sequentially compact, let $(f_{n})_{n\in \N}$ be a sequence in $M$.   
In case $(\forall n\in \N)(\exists m\in \N)(f_{m}(0)>n)$, we obtain a sequence $(g_{n})_{n\in \N}$ with $(\forall n\in \N)\varphi(g_{n}, n)$, which contradicts our assumptions.  
Hence, we must have $(\exists n_{0}\in \N)(\forall m\in \N)(f_{m}(0)\leq n_{0})$.  Now let $n_{1}\leq n_{0}$ be such that $f_{m}=n_{1}$ for infinitely many $m\in \N$.  
We thus obtain a sub-sequence of $(f_{n})_{n\in \N}$ that is constant in $M$, and hence trivially convergent.    In conclusion, $(M, d)$ is a sequentially compact metric space that is unbounded and not countably compact, contradicting either of the items from the theorem; $\QFAC^{0,1}$ thus follows from items (c) or (d).  
\end{proof}
We note that item (b.2) formulated with second-order codes is provable in $\ACA_{0}$ by \cite{browner}*{Theorem 3.17}.
We could study \emph{total} boundedness as well.  %We note that the second item in Theorem \ref{} is the `sequential' or `uniform' version of item (c.2) of Theorem \ref{NNZ}.

\smallskip

In conclusion, we have obtained Feferman's projection principle as in $\BOOT$ and countable choice as in $\QFAC^{0,1}$ from the statement \emph{compact metric spaces are separable}, i.e.\ have a second-order code.  
The weaker statement that sequentially compact spaces are \emph{bounded}, is equivalent to countable choice $\QFAC^{0,1}$, assuming the finite version of the latter.  

\subsection{Countable and measure zero sets in metric spaces}\label{BS2}
In this section, we show that basic properties of countable and measure zero sets in compact metric spaces imply $\BOOT$ and $\QFAC^{0,1}$, while also obtaining equivalences for the latter.  
We first need some definitions as follows.  
\bdefi Let $(M, d)$ be a metric space.  
\begin{itemize}
\item A set $A\subset M$ is \emph{countable} if there is $Y:M\di \N$ that is injective on $A$, i.e.\
\be\label{INJ}
  (\forall f, g \in A)( Y(f)=_{\N}Y(g)\di f=_{M}g ), 
\ee
\item A set $A\subset M$ is \emph{strongly countable} if there is a bijection $Y:M\di \N$ on $A$, i.e.\ we have \eqref{INJ} and $(\forall n\in \N)(\exists x\in A)(Y(x)=n)$; the latter means that $Y$ is surjective on $A$.
\item A set $A\subset M$ is \emph{enumerable} if there is a sequence $(x_{n})_{n\in \N}$ in $M$ that includes all the elements of $A$.
\item For an open ball $B_{d}^{M}(x, r)=\{y\in M: d(x, y)<r\}$, we put $| B_{d}^{M}(x, r)| =2r $. 
\item A set $A\subset M$ is \emph{measure zero} if for any $\eps>0$ there is a sequence of open balls  $(I_{n})_{n\in \N}$ such that $\cup_{n\in \N}I_{n}$ covers $A$ and $\eps>\sum_{n=0}^{\infty}|I_{n}|$. 
\item A set $A\subset M$ is \emph{effectively measure zero} if there is a sequence of open balls $(I_{n, k})_{n,k\in \N}$ such that for every $k\in \N$, $A\subset \cup_{n\in \N}I_{n,k}$ and $\frac{1}{2^{k}}>\sum_{n=0}^{\infty}|I_{n,k}|$. 
\end{itemize}
\edefi
The notion of `effectively measure zero' set can be found in mathematical logic and second-order RM under a different name (\cite{avi1337, nieyo}).

\smallskip

We have previously studied the RM of Tao's \emph{pigeon hole principle for measure spaces} from \cite{taoeps}*{\S1.7} in \cite{samBIG2}.
This principle expresses that the countable union of measure zero sets is also measure zero.  For codes of closed sets, this principle is provable in $\ACA_{0}$
by \cite{samBIG2}*{Theorem 3.6}.  The proof of the latter also yields that for codes of closed sets, `measure zero' implies `effectively measure zero', working in $\ACA_{0}$.
We now have the following theorem. 
\begin{thm}[$\ACAo+\SIND$]\label{NN3}
The following statements imply $\BOOT$ and $\QFAC^{0,1}$.
\begin{itemize}
%\item For any $M\subseteq [0,1]$ forming a metric space $(M, | \cdot |_{\R})$, sequential compactness implies separability. 
%\item For any $M\subseteq [0,1]$ and metric space $(M, \| \cdot \|_{\R})$, countable compactness implies separability.  
\item For any metric $d:(\N^{\N}\times \N^{\N})\di \R$, a countable set in the metric space $(\N^{\N}, d)$ can be enumerated. 
\item For any metric $d:(\N^{\N}\times \N^{\N})\di \R$, a countable and closed set in the metric space $(\N^{\N}, d)$ is effectively measure zero. 
%\item For any metric $d:(\N^{\N}\times \N^{\N})\di \R$, a closed and Jordan measure zero set in the metric space $(\N^{\N}, d)$ is effectively measure zero. 
\item For any metric $d:(\N^{\N}\times \N^{\N})\di \R$, a closed and measure zero set in the metric space $(\N^{\N}, d)$ is effectively measure zero. 
\item For any metric $d:(\N^{\N}\times \N^{\N})\di \R$ and a sequence $(A_{n})_{n\in \N}$ of closed and effectively measure zero sets in $(\N^{\N}, d)$, $\cup_{n\in \N}A_{n}$ is effectively measure zero. 
\end{itemize}
The first two items are equivalent to $\BOOT+\QFAC^{0,1}$.
We can restrict to arithmetical sets and to metric spaces that are either `sequential', `limit point', `countable' or `open cover' compact.  % assuming $\QFAC^{0,1}$.  
%We can replace `separability' in the second item by `total boundedness', assuming additionally $\QFAC^{0,1}$???  
\end{thm}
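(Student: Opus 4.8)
The plan is to re-use the compact metric space built in the proof of Theorem~\ref{NN} and to read the range of an arbitrary $G:\N^{\N}\di\N$ off the object each item provides, thereby obtaining $\RANGE$ (hence $\BOOT$) together with explicit witnesses (hence $\QFAC^{0,1}$). So fix $G$ and let $(M,d)$ with $M=\N^{\N}$ be the space defined via \eqref{bookana}, in which $00\dots$ is the unique limit point and each remaining class $p_{v}$ lies at distance $\frac{1}{2^{v}}$ from $00\dots$ exactly when $v\in\range(G)$. For the first item I would take $A:=\{f\in M: f\ne 00\dots\}$, which has an arithmetical characteristic function and is countable via $Y:=G$ (since $G(f)=G(g)\di f=_{M}g$ for nonzero $f,g$); any enumeration $(x_{n})_{n\in\N}$ of $A$ then yields, exactly as in \eqref{zeng}, the arithmetical equivalence giving $\RANGE$, while the $x_{n}$ supply $\QFAC^{0,1}$.

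The extraction of $\range(G)$ from a covering is the device shared by items (2)--(4). Here I would establish a gap estimate: in a covering witnessing effective measure zero at level $k$ every ball has radius $<\frac{1}{2^{k+1}}$, and for $v\ne w$ one has $|\frac{1}{2^{v}}-\frac{1}{2^{w}}|\geq\frac{1}{2^{v+1}}$, so at level $k=v+1$ any ball $B_{d}^{M}(x,r)$ containing a point at distance $\frac{1}{2^{v}}$ from $00\dots$ must have $x=_{M}p_{v}$, i.e.\ $G(x)=v$. Consequently, from an explicit double sequence $(I_{n,k})$ witnessing effective measure zero of a set containing all the $p_{v}$, one reads off $v\in\range(G)\asa(\exists n)(G(x_{n,v+1})=v)$, where $x_{n,k}$ is the centre of $I_{n,k}$; this is $\RANGE$, and the centres witness $\QFAC^{0,1}$.

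It remains to meet the hypotheses in $\ACAo+\SIND$. For item (2) I take $A:=M$, which is closed and countable (via $Y(00\dots):=0$ and $Y(f):=G(f)+1$ otherwise), and feed the covering it supplies to the gap estimate. For item (4) I take $A_{v}:=\{f\in M:G(f)=v\}$: each $A_{v}$ is empty or a singleton, hence closed, and by excluded middle each is effectively measure zero (a witness $f$, if it exists, gives the balls $B_{d}^{M}(f,\frac{1}{2^{k+3}})$, and otherwise the empty covering works), so the hypothesis applies and $\bigcup_{v}A_{v}$ feeds the gap estimate. For item (3) I take $A:=M$ and must prove it plainly measure zero: the point is that $X_{N}:=\{f\in M:f\ne 00\dots\wedge G(f)\leq N\}$ has at most $N+1$ elements, so it is a provably finite set with arithmetical characteristic function and, by the $\SIND$-enumeration of finite sets used in the footnote to $\SIND$, can be listed as a finite sequence; covering $00\dots$ and its tail cluster by one small ball and the listed far points by tiny balls gives, for each $\eps>0$, a covering of total length $<\eps$.

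The step I expect to be the crux is this last verification for item (3): one must produce, for each $\eps$, a covering of $M$ of small total length, yet \emph{without} thereby producing a single uniform double sequence --- that is, establish plain measure zero while the effective version remains unprovable, since assembling the per-$\eps$ enumerations of the $X_{N}$ into one double sequence would need a choice over $k$ that $\SIND$ does not grant. Keeping plain and effective measure zero on opposite sides of this line (and, for item (4), using excluded middle only per instance) is where the care lies. By contrast the reverse implications for items (1)--(2) are routine: $\BOOT$ yields the range of the injection and $\QFAC^{0,1}$ then enumerates any countable set as $(a_{n})_{n}$, whence the balls $B_{d}^{M}(a_{n},\frac{1}{2^{n+k+2}})$ witness effective measure zero.
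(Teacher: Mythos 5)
Your proposal reproduces the paper's own proof: the same metric space from Theorem \ref{NN}, the same test sets ($M$ and $M\setminus\{00\dots\}$ for items (1)--(3), the classes $\{f:G(f)=n\}$ for item (4)), the same small-radius ``gap'' argument reading $\RANGE$ off the centres of an effective-measure-zero covering, the same $\SIND$-based enumeration of the finite sets $X_{N}$ to establish plain measure zero in item (3), the same per-instance use of excluded middle in item (4), and the same reversals from $\BOOT+\QFAC^{0,1}$. The only deviations are cosmetic and easily repaired: your displayed equivalence needs the extra disjunct $G(00\dots)=v$ exactly as in \eqref{sof}, your $A_{v}$ may contain two points (namely $p_{v}$ and $00\dots$) when $G(00\dots)=v$, and the radii $\frac{1}{2^{n+k+2}}$ in the final reversal give total length exactly $\frac{1}{2^{k}}$ rather than strictly less, so they should be shrunk by one exponent.
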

\begin{proof}
For the first item, consider the same metric $d$ as in the proof of Theorem \ref{NN} based on $G^{2}$. 
The set $A=\{f\in \N^{\N} : f\ne_{1} 00\dots  \}$ is arithmetical and $G$ is injective on $A$.
For $(f_{n})_{n\in \N}$ an enumeration of $A$, we have that for any $n\in \N$:
\be\label{sof}
 (\exists f\in \N^{\N})(G(f)=n)\asa [(\exists m \in \N)( G(f_{m})=n )\vee G(00\dots)=n] .
\ee
Since the right-hand side is arithmetical, $\RANGE$ follows.  Moreover, $\QFAC^{0,1}$ follows in the same way as in the proof of Theorem \ref{NN}.  
For the equivalence, let $G:M\di \N$ be injective on $A\subset M$ and use $\BOOT$ to obtain $X\subset \N$ with $n\in X\asa (\exists x\in A)(G(x)=n)$ for any $n\in \N$.  
Apply $\QFAC^{0,1}$ to $(\forall n\in \N)(\exists x\in A)( n\in X\di G(x)=n)$ to obtain the required enumeration. 

\smallskip

Secondly, consider the same metric $d$ as in the proof of Theorem \ref{NN} based on $G^{2}$. 
The set $M$ is countable in the same way as in the previous paragraph.  By the second item, $M$ is effectively measure zero and let $B_{d}^{M}(f_{n, k}, r_{n, k})$ for $n, k\in \N$
be the associated sequence of open balls.  Now note that for $k_{0}\in\N $, $M$ is covered by $\cup_{n\in \N}B(f_{n,k_{0}},r_{n, k_{0}} )$, which is only possible if all $r_{n, k_{0}}$ are at most $\frac{1}{2^{k_{0}+1}}$ (as the length of an open ball is double the radius).  
As a result, the balls $B_{d}^{M}(f_{i, k_{0}}, r_{i, k_{0}})$ only contain one point of $M$ in case $G(f_{i, k_{0}})\leq k_{0}$ (and $f_{i, k_{0}}\ne 00\dots$).  Indeed, assuming $G(f_{i, k_{0}})\leq k_{0}$, the formula $\frac{1}{2^{k_{0}+1}}>d(f, f_{i, k_{0}})=|\frac{1}{2^{G(f)}}-\frac{1}{2^{G(f_{i, k_{0}})}}|$ implies $G(f)=_{0}G(f_{i, k_{0}})$ and hence $f=_{M}f_{i, k_{0}}$.    
In particular, if $(\exists f\in \N^{\N})(G(f)=k_{0})$, then there is $f_{n_{0}, k_{0}}$ such that $G(f_{n_{0}, k_{0}})=k_{0}$ for some $n_{0}\in \N$.
In particular, we obtain a version of \eqref{sof}, yielding $\BOOT+\QFAC^{0,1}$.  That the latter implies the second item is immediate given the enumeration provided by the first item.

\smallskip

Thirdly, any ball $B(00\dots, \frac{1}{2^{k+2}})$ covers all but finitely many points in $(M, d)$. 
%Using induction, one obtains a sequence of these finitely many points.  Using the latter, one readily shows that
Using $\SIND$, we observe that $M$ is measure zero; the third item yields $\BOOT+\QFAC^{0,1}$ in the same way as in the previous paragraph.

\smallskip

Fourth, consider $A$ from the first paragraph and define $A_{n}:=\{f\in \N^{\N}:  f\ne_{1}00\dots\wedge G(f)=n\}  $, which is trivially closed.  
In case $A_{n}=\emptyset$, the latter trivially has measure zero.  In case $f\in A_{n}$, then $B(f, \frac{1}{2^{k}})$ is a covering of $A_{n}$ as required for the latter being effectively measure zero. 
Then $\cup_{n\in \N}A_{n}=A $, i.e.\ the fourth item also yields $\BOOT+\QFAC^{0,1}$, following the second paragraph.
\end{proof}
A historical predecessor of the Lebesgue measure is the \emph{Peano-Jordan measure}.  The latter is connected to the Riemann integral (\cite{frink}) in the same way 
the former is connected to the Lebesgue integral.  The definition of `Peano-Jordan measure zero' amounts to `Lebesgue measure zero' where we only allow a \emph{finite} sequence of intervals.  
In light of the previous proof, Theorem \ref{NN3} also goes through for the Peano-Jordan measure and related notions.   Similarly, the Cantor-Bendixson theorem (see \cite{simpson2}*{VI.1}) or the statement \emph{a non-enumerable set has a limit point}, yield equivalences as in Theorem \ref{NN3}.

\smallskip

Finally, even restricting to strongly countable sets does not yield principles provable in $\ZF$, but nice equivalences do follow.
\begin{thm}[$\ACAo$]\label{NNX}
The following statements are equivalent.
\begin{itemize}
\item The axiom of countable choice as in $\QFAC^{0,1}$. 
%\item For any $M\subseteq [0,1]$ forming a metric space $(M, | \cdot |_{\R})$, sequential compactness implies separability. 
%\item For any $M\subseteq [0,1]$ and metric space $(M, \| \cdot \|_{\R})$, countable compactness implies separability.  
\item For any metric $d:(\N^{\N}\times \N^{\N})\di \R$, a strongly countable set in the metric space $(\N^{\N}, d)$ can be enumerated. 
\item For any metric $d:(\N^{\N}\times \N^{\N})\di \R$, a strongly countable and closed set in the metric space $(\N^{\N}, d)$ is effectively measure zero. 
\end{itemize}
\end{thm}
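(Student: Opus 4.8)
The three statements are labelled $(1)$, $(2)$, $(3)$ in the order listed; my plan is to prove the two easy implications $(1)\Rightarrow(2)$ and $(1)\Rightarrow(3)$ directly, and to obtain both reversals $(2)\Rightarrow(1)$ and $(3)\Rightarrow(1)$ from a single construction that encodes an arbitrary instance of $\QFAC^{0,1}$ as a strongly countable set. The crucial structural point, which is what makes the answer $\QFAC^{0,1}$ rather than $\BOOT+\QFAC^{0,1}$ as in Theorem \ref{NN3}, is that strong countability builds the surjectivity of the coding map into the hypothesis: the range is all of $\N$, so there is no range information left to compute with $\BOOT$, and only the selection of representatives remains, which is exactly $\QFAC^{0,1}$.

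For the forward directions, let $A\subset M$ be strongly countable with bijection $Y:M\di\N$ on $A$. Surjectivity of $Y$ on $A$ is the statement $(\forall n\in\N)(\exists x\in M)(\mathbb{1}_{A}(x)=_{0}1\wedge Y(x)=_{0}n)$, whose matrix is quantifier-free in the higher-order parameters $\mathbb{1}_{A}$ and $Y$, so $\QFAC^{0,1}$ yields a sequence $(x_{n})_{n\in\N}$ in $A$ with $Y(x_{n})=n$ for all $n$. Injectivity via \eqref{INJ} then forces each $a\in A$ to satisfy $a=_{M}x_{Y(a)}$, so $(x_{n})_{n\in\N}$ enumerates $A$, giving $(1)\Rightarrow(2)$. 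For $(1)\Rightarrow(3)$ I would take this enumeration and, for any metric $d$ and any $k$, cover $A$ by the balls $I_{n,k}:=B_{d}^{M}(x_{n},\tfrac{1}{2^{n+k+3}})$; since $a=_{M}x_{Y(a)}$ gives $d(a,x_{Y(a)})=0$ below the radius, these cover $A$, and $\sum_{n}|I_{n,k}|=\sum_{n}\tfrac{1}{2^{n+k+2}}=\tfrac{1}{2^{k+1}}<\tfrac{1}{2^{k}}$, so $A$ is effectively measure zero (closedness is not even needed here, which is harmless since it only weakens the hypothesis).

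For the reversals I would start from a quantifier-free $\varphi$ with $(\forall n\in\N)(\exists f\in\N^{\N})\varphi(f,n)$ and build a space on $M=\N^{\N}$ as follows. Writing $\sigma(f):=\lambda k.f(k+1)$ for the tail, call $f$ a \emph{witness} when $\varphi(\sigma(f),f(0))$ holds; this is decidable since $\varphi$ is quantifier-free. Define $=_{M}$ to identify two witnesses exactly when they share a first coordinate, and to leave each non-witness in its own class; one checks this is an equivalence relation and that $A:=\{f:\ f\text{ is a witness}\}$ is a union of $=_{M}$-classes, so $\mathbb{1}_{A}$ respects \eqref{koooooo}. Put $Y(f):=f(0)$, which respects $=_{M}$, is injective on $A$ by construction, and is surjective on $A$ because $\langle n\rangle*h\in A$ whenever $\varphi(h,n)$; thus $A$ is strongly countable. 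Equipping $M$ with the discrete metric $d(f,g):=1$ for $f\ne_{M}g$ and $0$ otherwise makes $(M,d)$ a metric space in the sense of Definition \ref{donkc} in which every $=_{M}$-invariant set, in particular $A$, is closed. Applying $(2)$ gives a sequence meeting every class of $A$; applying $(3)$, any ball of radius $<\tfrac12$ equals a single class, so for a fixed $k$ the centres of the covering balls $(I_{n,k})_{n}$ again meet every class of $A$. In either case, for each $n$ I would use $\exists^{2}$ (equivalently $\mu^{2}$) to find the least term of the sequence lying in $A$ with first coordinate $n$ and output its tail as $f_{n}$; then $\varphi(f_{n},n)$ holds for all $n$, which is $\QFAC^{0,1}$ for this instance. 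Combining the four implications yields $(1)\Leftrightarrow(2)\Leftrightarrow(3)$.

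The main obstacle is the tension in the reverse construction between two requirements that pull in opposite directions: $Y$ must be \emph{injective} on $A$, which wants the $=_{M}$-classes to be indexed cleanly by the first coordinate, while $\mathbb{1}_{A}$ must \emph{respect} $=_{M}$, which the naive relation ``same first coordinate'' destroys because it mixes witnesses with non-witnesses and makes witness-status ill-defined on a class. Resolving this is the heart of the proof: the witness-sensitive equivalence relation keeps $A$ invariant, the discrete metric makes $A$ automatically closed and forces any cover or enumeration to select genuine representatives (i.e.\ witnesses) rather than exploit limit points, and surjectivity of $Y$ is supplied by the choice hypothesis itself without any prior selection of witnesses. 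Once these three features coexist, everything else, including the final extraction of the choice sequence, is routine and needs nothing beyond $\ACAo$; note in particular that, unlike Theorems \ref{NN} and \ref{NN3}, no instance of $\SIND$ or $\QFAC^{0,1}_{\fin}$ is required, which is why the equivalence is clean over $\ACAo$.
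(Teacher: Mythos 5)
Your proposal is correct, and the forward directions coincide with the paper's (the paper also gets the enumeration by applying $\QFAC^{0,1}$ to surjectivity of $Y$ on $A$, and then covers the enumerated set by balls of summable radii). Where you genuinely diverge is in the reversals. The paper proves the cycle $(1)\Rightarrow(2)\Rightarrow(3)\Rightarrow(1)$ and obtains $(3)\Rightarrow(1)$ by re-using the construction of Theorem \ref{NN3}: the compact space with metric $d(f,g)=|\frac{1}{2^{G(f)}}-\frac{1}{2^{G(g)}}|$ built from a functional $G$ which, via the $\RANGE$-reduction of Theorem \ref{NN}, is surjective exactly when the choice instance has witnesses; the witnesses are then read off from the centres of the effectively-measure-zero covering, since sufficiently small balls isolate the points of that space. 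You instead encode the quantifier-free instance $\varphi$ directly: witness-classes under a bespoke $=_{M}$, the discrete metric, and extraction of witnesses by $\mu^{2}$-search over enumeration terms or ball centres, giving both $(2)\Rightarrow(1)$ and $(3)\Rightarrow(1)$ from a single construction. Your route buys self-containedness (no detour through $\RANGE$ or Theorem \ref{NN3}) and essentially trivial verifications of the metric axioms, closedness, and strong countability. The paper's route buys economy and, more importantly for its overarching theme, a \emph{compact} witnessing space: the space from Theorem \ref{NN3} is sequentially, countably, and limit-point compact, so the equivalence survives restriction to compact metric spaces. Your discrete space is maximally non-compact (an infinite discrete space is none of these), so your argument proves Theorem \ref{NNX} exactly as stated but would not yield such a compactness-restricted refinement.
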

\begin{proof}
The third item is immediate from the second item and the latter follows from the first item by applying $\QFAC^{0,1}$ to `$Y$ is surjective on $A$' in the definition of $A$ being strongly countable. 
That the third item implies $\QFAC^{0,1}$ follows from the proof of Theorem \ref{NN3}.  % by noting that the set $A$ in the latter is strongly countable if $(\forall n\in \N)(\exists x\in \N^{\N})(G(x)=n)$, which is an instance of the antecedent of $\QFAC^{0,1}$.
\end{proof}
In conclusion, we have obtained $\BOOT$ and $\QFAC^{0,1}$ from basic statements about countable and measure zero sets.  
Weaker statements about strongly countable sets turn out to be equivalent to $\QFAC^{0,1}$.

\subsection{Continuous functions on metric spaces}\label{BSS}
\subsubsection{Introduction}\label{shintro}
In this section, we show that basic properties of (Lipschitz) continuous functions on compact metric spaces like the supremum principle, imply or are equivalent to strong axioms including $\BOOT$, $\QFAC^{0,1}$, and even full second-order arithmetic.  
We study three versions of the supremum principle for continuous $f:M\di \R$ on a compact metric space $(M, d)$. 
\begin{itemize}  
\item The supremum $\sup_{x\in \overline{B}(x_{0}, \frac{1}{2^{n}})}f(x)$ is a sequence with variable $n\in \N$.
\item The supremum $\sup_{y\in M}f(x,y)$ is a function with variable $x\in M$.
\item The supremum $\lambda f.\sup_{x\in M}f(x)$ is a functional with variable $f:M\di \R$. 
\end{itemize}
Each of these can be found in the literature (\cite{taoana2, taocompa, rudin, rudinrc, stein1}).  In particular, the supremum norm on the Banach space $C(M)$ of continuous functions, denoted 
\be\label{normy}
\|f\|_{\infty}:= \sup_{x\in M}f(x),
\ee
is essentially $\lambda f.\sup_{x\in M}f(x)$.  The coding in second-order RM allows one to define \eqref{normy} in $\RCA_{0}$ by \cite{simpson2}*{IV.2.13}.
Moreover, \emph{maximal functions} from harmonic analysis have the form $\sup_{y\in M}f(x,y)$ (see e.g.\ \cite{stein1}*{p.\ 92} or \cite{stein4}*{p.\ 198 and 208}) and occur in (generalisations of) the Hardy-Littlewood theorem (\cite{stein2}*{p.\ 48} and \cite{stein3}*{p.~246}).   Similar results and observations are possible for the extreme value theorem.  % which we shall discuss in passing in the below.  
\subsubsection{The supremum as a sequence}
We show that $\BOOT+\QFAC^{0,1}$ can be derived from a most basic supremum principle, as in Principle \ref{lipper}.
We establish nice equivalences in Theorems \ref{slab}-\ref{zirf} and Corollary \ref{stovocor}.
\begin{princ}[Supremum Principle]\label{lipper}
%For any metric $d:(\N^{\N}\times \N^{\N})\di \R$ and sequentially compact $(M, d)$ with $M\subset \N^{\N}$, any continuous $f:M\di [0,1]$ is bounded on $M$.
For any metric $d:(\N^{\N}\times \N^{\N})\di \R$ and sequentially compact $(M, d)$ with $M=\N^{\N}$, a continuous function $f:M\di \R$ has a supremum as in: $(\forall x\in M)(\exists (y_{n})_{n\in \N})(\forall n\in \N)\big(y_{n}=\sup_{z\in \overline{B}_{d}^{M}(x, \frac{1}{2^{n}})}f(z)\big)$.
\end{princ}
We also study the sequential version of the supremum principle. 
Sequential versions are studied in second-order RM in e.g.\ \cites{fuji1,fuji2,hirstseq,dork2,dork3, kooltje, simpson2,damurm,yokoyamaphd, polahirst}.  
\begin{princ}[Sequential Supremum Principle]\label{slipper}
%For any metric $d:(\N^{\N}\times \N^{\N})\di \R$ and sequentially compact $(M, d)$ with $M\subset \N^{\N}$, any continuous $f:M\di [0,1]$ is bounded on $M$.
For any metric $d:(\N^{\N}\times \N^{\N})\di \R$ and sequentially compact $(M, d)$ with $M=\N^{\N}$, let $(f_{n})_{n\in \N}$ be a sequence of continuous $M\di \R$-functions.
There is $(y_{n})_{n\in \N}$ with $(\forall n\in \N)(y_{n}=\sup_{x\in M}f_{n}(x))$.  % for all $n\in \N$. 
% continuous function $f:M\di \R$ has a supremum as in: $(\forall x\in M)(\exists (y_{n})_{n\in \N})(\forall n\in \N)(y_{n}=\sup_{z\in \overline{B}_{d}^{M}(x, \frac{1}{2^{n}})}f(z)$.
\end{princ}
We also study the associated extreme value theorem as follows. 
\begin{princ}[Maximum Principle]\label{lipper3}
%For any metric $d:(\N^{\N}\times \N^{\N})\di \R$ and sequentially compact $(M, d)$ with $M\subset \N^{\N}$, any continuous $f:M\di [0,1]$ is bounded on $M$.
For any metric $d:(\N^{\N}\times \N^{\N})\di \R$ and sequentially compact $(M, d)$ with $M=\N^{\N}$, a continuous function $f:M\di \R$ has a maximum as in: $(\forall x\in M)(\exists (x_{n})_{n\in \N})(\forall n\in \N)\big(f(x_{n})=\sup_{z\in \overline{B}_{d}^{M}(x, \frac{1}{2^{n}})}f(z)\big)$.
\end{princ}
First of all, we have the following theorem.  
\begin{thm}[$\ACAo+\SIND$]\label{slab} The following are equivalent.
\begin{itemize}
\item The Supremum Principle \ref{lipper} restricted to $f:M\di [0,1]$.   
\item The Sequential Supremum Principle \ref{slipper} restricted to $f:M\di [0,1]$.   
\item The projection principle $\BOOT$.
\end{itemize}
We can restrict Principle \ref{lipper} to Lipschitz continuity and we can replace `sequential' by `limit point' or `countable' or `open cover'.  % assuming $\QFAC^{0,1}$. 
\end{thm}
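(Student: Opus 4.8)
The plan is to prove the three-way equivalence by showing that each supremum principle yields $\BOOT$ and, conversely, that $\BOOT$ proves both principles; I expect the implications \emph{towards} $\BOOT$ to be the conceptual core, while the converse is routine bootstrapping. Throughout I would reuse the metric space $(M,d)$ attached to a functional $G:\N^{\N}\di\N$ from the proof of Theorem~\ref{NN}, together with the equivalence $\BOOT\asa\RANGE$ established there. Recall that this space has $M=\N^{\N}$, that every point other than $00\dots$ is isolated (the values $\tfrac{1}{2^{k}}$ being discretely separated), and that $00\dots$ is the unique accumulation point, so that $(M,d)$ is sequentially, limit point, countably, and open-cover compact.

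For the reverse direction the key observation is that the \emph{Lipschitz} function $f(g):=d(00\dots,g)$ decodes the range of $G$ through its ball-suprema. First I would note $\overline{B}_{d}^{M}(00\dots,\tfrac{1}{2^{n}})=\{g:G(g)\geq n\}$ (together with $00\dots$ itself), whence $\sup_{z\in\overline{B}_{d}^{M}(00\dots,1/2^{n})}f(z)=\tfrac{1}{2^{r(n)}}$, where $r(n)$ is the least value $\geq n$ attained by $G$ on a point $\ne 00\dots$ (and the supremum is $0$ if no such value exists). Instantiating Principle~\ref{lipper} at $x=00\dots$ produces the sequence $(y_{n})_{n\in\N}$ of these suprema, and one checks $n\in\range(G)\asa[y_{n}\geq\tfrac{1}{2^{n}}\vee G(00\dots)=n]$; since the right-hand side is arithmetical in $(y_{n})_{n}$, this yields $\RANGE$ and hence $\BOOT$. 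The extra disjunct $G(00\dots)=n$ handles the one value $G(00\dots)$ separately, exactly as in \eqref{zeng}. For the Sequential Supremum Principle~\ref{slipper} I would instead take $f_{n}(g):=1$ if $g\ne 00\dots\wedge G(g)=n$ and $f_{n}(g):=0$ otherwise; each $f_{n}$ is continuous because the non-$00\dots$ points are isolated and $f_{n}$ vanishes near $00\dots$, and $\sup_{x\in M}f_{n}(x)=1\asa(\exists g\ne 00\dots)(G(g)=n)$, so the resulting sequence of suprema again decodes $\range(G)$ after the single check whether $G(00\dots)=n$.

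For the forward direction I would assume $\BOOT$ and exploit $(\exists^{2})$ from $\ACAo$. Given continuous $f:M\di[0,1]$ and a fixed $x$, the predicate ``$d(x,z)\leq\tfrac{1}{2^{n}}\wedge f(z)>q$'' is arithmetical in $z$ (using $\exists^{2}$ to decide the real comparison), so it defines a functional to which $\BOOT$ applies, returning $X\subset\N$ collecting exactly those codes $\langle n,q\rangle$ for which some $z\in\overline{B}_{d}^{M}(x,1/2^{n})$ satisfies $f(z)>q$. The Dedekind cut $\{q\in\Q:\langle n,q\rangle\in X\}$ then defines $y_{n}=\sup_{z\in\overline{B}_{d}^{M}(x,1/2^{n})}f(z)$; this is a genuine real precisely because the restriction $f:M\di[0,1]$ supplies the upper bound, and the construction is uniform in $n$, so $(y_{n})_{n\in\N}$ is a sequence produced with no appeal to choice. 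The identical argument, using $\sup_{x\in M}f_{n}(x)$ in place of the ball-suprema, yields Principle~\ref{slipper}.

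The main obstacle I anticipate is bookkeeping rather than depth: verifying continuity of the decoding functions at $00\dots$ and pinning down the exact value of the ball-suprema, and confirming that $\BOOT$ together with $(\exists^{2})$ genuinely forms the cut \emph{uniformly}, so that one obtains a sequence rather than merely pointwise suprema. This uniformity is exactly why these principles deliver $\BOOT$ but, unlike the separability statement of Theorem~\ref{NN}, do \emph{not} seem to deliver $\QFAC^{0,1}$, since the output is a sequence of reals rather than a sequence of witnesses. Because $f(g)=d(00\dots,g)$ is Lipschitz, the reverse direction already establishes the equivalence for the Lipschitz-restricted Principle~\ref{lipper}. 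Finally, as $(M,d)$ is compact in all four senses---$\SIND$ being what lets us extract finite sub-coverings for the `countable' and `open cover' variants, exactly as in Theorem~\ref{NN}---replacing `sequential' by `limit point', `countable', or `open cover' leaves both directions intact.
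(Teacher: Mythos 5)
Your proposal is correct and follows essentially the same route as the paper: both directions reuse the metric space $(M,d)$ from Theorem~\ref{NN}, derive $\RANGE$ from the ball-suprema of the Lipschitz function $x\mapsto d(00\dots,x)=\frac{1}{2^{G(x)}}$ (your equivalence $n\in\range(G)\asa[y_{n}\geq\frac{1}{2^{n}}\vee G(00\dots)=n]$ is the paper's \eqref{soft}), and recover both principles from $\BOOT$ by comprehension over pairs $(n,q)$ exactly as the paper does. The only cosmetic difference is your choice of indicator functions $f_{n}=\mathbb{1}[G(\cdot)=n\wedge\cdot\ne 00\dots]$ for the sequential principle, where the paper truncates $f$ to $\{x: G(x)\geq n\vee x=00\dots\}$; both families are continuous by isolation of the non-$00\dots$ points and serve the same decoding purpose.
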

\begin{proof}
First of all, consider the same metric $d$ as in the proof of Theorem~\ref{NN} based on $G^{2}$. 
Define $f:M\di \R$ as $f(x)=\frac{1}{2^{G(x)}}$ for $x\ne 00\dots$ and $f(00\dots)=0$.  Then $f$ is continuous on $M$ with modulus $H(x, k):= \frac{1}{2^{G(x)+k+1}}$.  
To establish Lipschitz continuity, note that for $x, y\ne 00\dots$, we have by definition:
\[\textstyle
|f(x)-f(y)|=|\frac{1}{2^{G(x)}}-\frac{1}{2^{G(y)}}|\leq d(x, y), 
\]
while the inequality $|f(x)-f(y)|=\frac{1}{2^{G(x)}}\leq d(x, y)$ holds if $y=00\dots$, i.e.\ $f$ is Lipschitz continuous as required.   
For $x_{0}=00\dots$, let $(y_{n})_{n\in \N}$ be a sequence such that $y_{n}=\sup_{z\in \overline{B}_{d}^{M}(x_{0}, \frac{1}{2^{n}})}f(z)$ as provided by Principle \ref{lipper}. 
By definition, we have for any $n\in \N$ that
\be\label{soft}\textstyle
(\exists f\in \N^{\N})(G(f)=n)\asa [y_{n}=_{\R}\frac{1}{2^{n}}\vee G(00\dots)=n],
\ee
and $\RANGE$ follows as the right-hand side of \eqref{soft} is arithmetical.   Regarding Principle \ref{slipper}, define $f_{n}(x)$ as $f(x)$ in case $G(x)\geq n\vee x=00\dots$, and $0$ otherwise.  
The former is continuous with modulus $H$ as above.  Moreover, let $(y_{n})_{n\in \N}$ be the sequence provided by Principle~\ref{slipper}.  
Then \eqref{soft} again holds and $\RANGE$ follows. 

\smallskip

Secondly, let $(M, d)$ be as in Principle \ref{lipper} 
and fix continuous $f:M\di [0,1]$ and $x_{0}\in M$.  Use $\BOOT$ to obtain $X_{0}\subset \N\times \Q$ such that for $n\in \N, q\in \Q\cap [0,1]$:
\[\textstyle
(n, q)\in X_{0}\asa (\exists x\in B(x_{0}, \frac{1}{2^{n}}))(f(x)>q)
\]
Clearly, the set $X_{0}$ allows one to define the supremum required by Principle \ref{lipper}.  
The modifications to obtain Principle \ref{slipper} are straightforward. 
\end{proof}
The absence of the maximum principle and the restriction to bounded functions in Theorem \ref{slab} are both essential by the following theorem. 
%The use of $\QFAC^{0,1}$ in the previous theorem is necessary, as shown by the following theorem.
Items (b.2), (d.2), and (e.2) of Theorem \ref{zirf} formulated with codes are provable in $\ACA_{0}$ (see \cite{browner, simpson2, diniberg2}).
\begin{thm}[$\ACAo$]\label{zirf}
The following are equivalent.
\begin{enumerate}
\renewcommand{\theenumi}{\alph{enumi}}

\item The axiom of countable choice $\QFAC^{0,1}$.
\item The combination of the following.
\begin{itemize}
\item[(b.1)] The axiom $\QFAC^{0,1}_{\fin}$.
\item[(b.2)] For any metric $d:(\N^{\N}\times \N^{\N})\di \R$ and sequentially compact $(M, d)$ with $M\subset \N^{\N}$, any continuous $f:M\di \R$ is bounded on $M$.
%\item For any metric $d:(\N^{\N}\times \N^{\N})\di \R$ and sequentially compact $(M, d)$ with infinite $M\subset \N^{\N}$, there exists an unbounded $f:M\di \R$.  % which is unbounded on $M$.
%\item For any metric $d:(\N^{\N}\times \N^{\N})\di \R$ and sequentially compact $(M, d)$ with infinite $M\subset \N^{\N}$, there exists a discontinuous $f:M\di [0,1]$.  
\end{itemize}
\item The combination of the following.
\begin{itemize}
\item[(c.1)] The axiom $\QFAC^{0,1}_{\fin}$.
%\item For any metric $d:(\N^{\N}\times \N^{\N})\di \R$ and sequentially compact $(M, d)$ with $M\subset \N^{\N}$, any continuous $f:M\di \R$ is bounded on $M$.
%\item For any metric $d:(\N^{\N}\times \N^{\N})\di \R$ and sequentially compact $(M, d)$ with infinite $M\subset \N^{\N}$, there exists an unbounded $f:M\di \R$.  % which is unbounded on $M$.
\item[(c.2)] For any metric $d:(\N^{\N}\times \N^{\N})\di \R$ and sequentially compact $(M, d)$ with infinite $M\subset \N^{\N}$, there exists a discontinuous $f:M\di \R$.  
\end{itemize}
\item The combination of the following.
\begin{itemize}
\item[(d.1)] The axiom $\QFAC^{0,1}_{\fin}$.
\item[(d.2)] \(Extreme value\) For any metric $d:(\N^{\N}\times \N^{\N})\di \R$ and sequentially compact $(M, d)$ with $M\subset \N^{\N}$, for any continuous $f:M\di \R$ with $\sup_{x\in M}f(x)$ given, there is $x_{0}\in M$ with $f(x_{0})=\sup_{x\in M}f(x)$.
%\item For any metric $d:(\N^{\N}\times \N^{\N})\di \R$ and sequentially compact $(M, d)$ with infinite $M\subset \N^{\N}$, there exists an unbounded $f:M\di \R$.  % which is unbounded on $M$.
%\item For any metric $d:(\N^{\N}\times \N^{\N})\di \R$ and sequentially compact $(M, d)$ with infinite $M\subset \N^{\N}$, there exists a discontinuous $f:M\di [0,1]$.  
\end{itemize}
\item The combination of the following.
\begin{itemize}
\item[(e.1)] The axiom $\QFAC^{0,1}_{\fin}$.
\item[(e.2)] \(Dini\).  Let $(M, d)$ be sequentially compact and let $f_{n}: (M\times \N)\di \R$ be a monotone sequence of continuous functions converging to a continuous function $f:M\di \R$.  
Then the convergence is uniform.  
\end{itemize}

\end{enumerate}
\end{thm}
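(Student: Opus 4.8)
The plan is to prove the theorem as a ``star'' with item (a) at the centre: I would establish $\QFAC^{0,1}\di\textup{(X)}$ for each $\textup{X}\in\{\textup{(b),(c),(d),(e)}\}$, and conversely $\textup{(X)}\di\QFAC^{0,1}$. Since $\QFAC^{0,1}$ trivially yields $\QFAC^{0,1}_{\fin}$, only the ``.2''-items need work in the forward direction, and each follows the same Bolzano--Weierstrass template: use $\QFAC^{0,1}$ to extract a witnessing sequence, pass to a convergent sub-sequence by sequential compactness, and exploit continuity. The reverse implications I would handle by the contrapositive, re-using the \emph{single} counterexample space constructed in the proof of Theorem~\ref{NNZ}; the only new ingredient per item is a well-chosen continuous function (or sequence thereof) on that space. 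Throughout I work in $\ACAo$ and use that $(\exists^{2})$ renders the arithmetical matrices below quantifier-free, so that $\QFAC^{0,1}$ genuinely applies.

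For the standard forward directions, consider (b.2): were a continuous $f\colon M\di\R$ on sequentially compact $(M,d)$ unbounded, then $(\forall n)(\exists x\in M)(|f(x)|>n)$, and $\QFAC^{0,1}$ yields $(x_{n})_{n\in\N}$ with $|f(x_{n})|>n$; a convergent sub-sequence $x_{n_{k}}\di x^{*}$ gives $f(x_{n_{k}})\di f(x^{*})$ by continuity, contradicting $|f(x_{n_{k}})|>n_{k}$. Item (d.2) is identical in spirit: given the value $s=\sup_{x\in M}f(x)$, apply $\QFAC^{0,1}$ to $(\forall n)(\exists x\in M)(f(x)>s-\frac{1}{2^{n}})$, extract a convergent sub-sequence with limit $x_{0}$, and conclude $f(x_{0})=s$. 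For Dini's theorem (e.2), if the monotone convergence $f_{n}\di f$ were not uniform, monotonicity provides $\eps>0$ with $(\forall N)(\exists x\in M)(|f_{N}(x)-f(x)|\geq\eps)$; applying $\QFAC^{0,1}$ and passing to a convergent sub-sequence $x_{N_{k}}\di x^{*}$, the inequality $|f_{m}(x^{*})-f(x^{*})|\geq\eps$ survives for every fixed $m$ (using continuity of $f_{m}$ and $f$ together with monotonicity), contradicting pointwise convergence at $x^{*}$.

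For the reverse implications I argue contrapositively: assuming $\neg\QFAC^{0,1}$ together with $\QFAC^{0,1}_{\fin}$, I reconstruct the space $(M,d)$ of Theorem~\ref{NNZ}, where a point with first value $n$ encodes witnesses $g_{0},\dots,g_{n}$ for the failing instance of choice, $f=_{M}g$ abbreviates $f(0)=g(0)$, and $d(f,g)=|f(0)-g(0)|$. This space is sequentially compact (an unbounded sequence of first-values would assemble the forbidden choice sequence) but unbounded, and it is uniformly discrete, so every function on it is continuous with constant modulus. I then supply the witnessing data item by item: for (b.2) the continuous unbounded map $f(x):=x(0)$; for (c.2) the mere observation that, $M$ being infinite (by $\QFAC^{0,1}_{\fin}$) and uniformly discrete, \emph{no} discontinuous function exists at all; for (d.2) the continuous $f(x):=1-\frac{1}{x(0)+1}$, whose supremum $1$ is given yet unattained; and for (e.2) the monotone step-functions $f_{n}(x):=1$ if $x(0)\leq n$ and $0$ otherwise, converging pointwise to the continuous constant $1$ but not uniformly. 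In each case $\QFAC^{0,1}_{\fin}$ is exactly what makes $M$ unbounded/infinite, so that the relevant ``.2''-item fails.

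The one genuinely delicate step is the forward direction $\QFAC^{0,1}\di\textup{(c.2)}$: producing a discontinuous function on an \emph{arbitrary} infinite sequentially compact $M$. Here infiniteness only asserts, for each $N$, the existence of $N+1$ pairwise distinct points, so I would first apply $\QFAC^{0,1}$ to obtain a sequence $(w_{n})_{n\in\N}$ of tuples, each $w_{n}$ listing $n+1$ distinct points of $M$, and then thin it by primitive recursion (each step a finite $(\exists^{2})$-search) into a genuinely injective sequence $(z_{m})_{m\in\N}$; sequential compactness yields a sub-sequence converging to some $x^{*}$, necessarily distinct from all but one of its terms, whence the function taking value $1$ on these terms and $0$ elsewhere---definable via $(\exists^{2})$ and respecting $=_{M}$---is discontinuous at $x^{*}$. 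I expect this extraction of an injective, convergent sequence from bare infiniteness (without dependent choice, and staying inside $\ACAo+\QFAC^{0,1}$) to be the main obstacle; the remaining forward directions are routine Bolzano--Weierstrass arguments, and the reverse directions become uniform once the space of Theorem~\ref{NNZ} is in hand.
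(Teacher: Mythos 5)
Your proposal is correct and follows essentially the same route as the paper: the same ``star'' decomposition around item (a), the same Bolzano--Weierstrass arguments (with $(\exists^{2})$ making the matrices quantifier-free) for the forward directions, and the same counterexample space from Theorem \ref{NNZ} --- sequentially compact, unbounded, uniformly discrete, so that every function on it is vacuously continuous --- for all the reverse directions. The only local difference is in $\QFAC^{0,1}\di$(c.2): the paper defines the unbounded function $f(x):=n$ if $y_{n}=_{M}x$ (and $0$ otherwise) and cites item (b.2) to conclude discontinuity, whereas you construct a discontinuous indicator function directly from an injective convergent sub-sequence; both arguments work, and your explicit $(\exists^{2})$-thinning of the tuples of distinct points into an injective sequence spells out a step the paper leaves implicit.
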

\begin{proof}
First of all, we assume $\QFAC^{0,1}$ and prove the other items.  
Let $(M, d)$ be sequentially compact and let $f:M\di \R$ be continuous.  In case $f$ is unbounded on $M\subset \N^{\N}$, we have $(\forall n\in \N)(\exists x\in M)(|f(x)|> n)$.
Apply $\QFAC^{0,1}$ to obtain a sequence $(x_{n})_{n\in \N}$ with $f(x_{n})>n$ for $n\in \N$.  This sequence has a convergent sub-sequence, say with limit $z\in \N^{\N}$.  
Clearly, $f$ is not continuous at $z$, a contradiction, implying that $f$ is in fact bounded, as required for item (b).  To establish item (c), suppose additionally that $M$ is infinite, i.e.\ 
$(\forall N\in \N)( \exists x_{0}, \dots, x_{N}\in M)(\forall i, j\leq N)( x_{i}=_{M}x_{j}\di i=_{\N}j\)$.   Apply $\QFAC^{0,1}$ to obtain a sequence of pairwise different elements in $M$, called $(y_{n})_{n\in \N}$.
Now define $f:M\di \R$ as
\[
f(x):=
\begin{cases}
n & \textup{ if } (\exists n\in \N)(y_{n}=_{M}x) \\
0 & \textup{ otherwise}
\end{cases}
\]
and note that it is unbounded and therefore discontinuous by item (b), i.e.\ item~(c) also follows from $\QFAC^{0,1}$.  
To establish item (d), apply $\QFAC^{0,1}$ to $(\forall k\in \N)(\exists y\in M)(\sup_{x\in M}f(x)-\frac{1}{2^{k}} <f(y) )$.
The resulting sequence has a convergent sub-sequence, say with limit $z\in M$, and $f(z)=\sup_{x\in M}f(x)$ follows by continuity.
To establish item (e), assume the convergence is not uniform, i.e.\
\[\textstyle
(\exists k\in \N)(\forall N\in \N)(\exists x\in M )(\exists n\geq N)(|f_{n}(x)-f(x)|\geq \frac{1}{2^{k}} ).
\]
Fix such $k\in \N$ and apply $\QFAC^{0,1}$ to the resulting formula, yielding $(x_{n})_{n\in \N}$ and $g\in \N^{\N}$ with $g(n)\geq n$ and $|f_{g(n)}(x_{n})-f(x_{n})|\geq \frac{1}{2^{k}}$.
This sequence has a convergent sub-sequence, say with limit $z\in M$.  
The pointwise convergence of $(f_{n})_{n\in \N}$ at $z$ then yields a contradiction, i.e.\ $\QFAC^{0,1}$ yields item (e).  

\smallskip

Now assume item (b) and let $\varphi$ be quantifier-free such that $(\forall n\in \N)(\exists f\in \N^{\N})\varphi(f, n)$ but there is no witnessing sequence.
Consider the metric space $(M, d)$ from the proof of Theorem~\ref{NNZ} and define $F:M\di \R$ by $F(f):=f(0)$ for $f\in M$ and note that it is (vacuously) continuous on $M$.  
Using $\QFAC^{0,1}_{\fin}$, $F$ is unbounded on $M$, contradiction, and $\QFAC^{0,1}$ follows.  Moreover, since any function $H:M\di \R$ is (vacuously) continuous, item~(c) also implies $\QFAC^{0,1}$.
To show that item (d) implies $\QFAC^{0,1}$, define $G(f):= 1-\frac{1}{2^{f(0)}}$ and note that $G$ is continuous on $(M, d)$ as above with $\sup_{f\in M}G(f)=1$, using $\QFAC^{0,1}_{\fin}$.
Clearly, there is no $f\in M$ such that $G(f)=1$.  To show that item~(e) implies $\QFAC^{0,1}$, define $F_{n}(f):= f(0)$ if $f(0)\leq n$ and $0$ otherwise.  
Clearly, $(F_{n})_{n\in \N}$ converges pointwise to $F(f):=f(0)$ and continuity on $(M, d)$ as above is again straightforward.  However, the convergence cannot be uniform, and item (e) must imply $\QFAC^{0,1}$.  
\end{proof}
Due to the `sequential' nature of the supremum in Principle \ref{lipper}, it is not that surprising that one obtains $\QFAC^{0,1}$.
However, Theorem \ref{zirf} only requires an upper bound, a concept free of any `sequential-ness'.  
Following Theorem \ref{NNZ}, we could replace `bounded' in Theorem \ref{zirf} by `uniformly continuous'.  

\smallskip

Finally, the previous results combine into the following elegant summary.
\begin{cor}[$\ACAo+\SIND$]\label{stovocor}
The following are equivalent.
\begin{itemize}
\item $\BOOT+\QFAC^{0,1}$
%\item The axiom $\QFAC^{0,1}_{\fin}$ and the Supremum Principle \ref{lipper}. 
\item The axiom $\QFAC^{0,1}_{\fin}$ and the Sequential Supremum Principle \ref{slipper}. 
%\item The axiom $\QFAC^{0,1}_{\fin}$ and the Maximum Principle \ref{lipper3}. 
\end{itemize}
\end{cor}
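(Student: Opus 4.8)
The plan is to combine Theorems \ref{slab} and \ref{zirf}: write (I) for $\BOOT+\QFAC^{0,1}$ and (II) for $\QFAC^{0,1}_{\fin}$ together with the Sequential Supremum Principle \ref{slipper}, and prove (I)$\asa$(II) over $\ACAo+\SIND$. For (I)$\Rightarrow$(II), note first that $\QFAC^{0,1}_{\fin}$ follows trivially from $\QFAC^{0,1}$. To obtain Principle \ref{slipper}, fix a sequence $(f_n)_{n\in \N}$ of continuous functions on a sequentially compact $(\N^{\N}, d)$. Since $\QFAC^{0,1}$ implies item (b.2) of Theorem \ref{zirf}, each $f_n$ is bounded, so each $\sup_{x\in M}f_n(x)$ is a genuine real number. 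I would then mimic the second half of the proof of Theorem \ref{slab}: apply $\BOOT$ to the functional $Y$ with $Y(x,\langle n,j\rangle)=0\asa f_n(x)>q_j$ (with $(q_j)_{j}$ enumerating $\Q$, and $f_n(x)>q_j$ decided by $\exists^2$) to obtain $X\subset \N$ with $\langle n,j\rangle\in X\asa(\exists x\in M)(f_n(x)>q_j)$. Boundedness ensures each cut $\{q_j:\langle n,j\rangle\in X\}$ is bounded above, so it defines $y_n=\sup_{x}f_n(x)$, and arithmetical comprehension collects the $y_n$ into the desired sequence.

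For (II)$\Rightarrow$(I), the $\BOOT$ half is immediate: Principle \ref{slipper} for $\R$-valued functions trivially entails its restriction to $[0,1]$-valued functions, which Theorem \ref{slab} identifies with $\BOOT$. For $\QFAC^{0,1}$, I plan to re-run the implication ``(b) implies $\QFAC^{0,1}$'' of Theorem \ref{zirf}. Assuming $\QFAC^{0,1}$ fails, say there is quantifier-free $\varphi$ with $(\forall n\in \N)(\exists f\in \N^{\N})\varphi(f,n)$ but no witnessing sequence, the goal is to produce a sequentially compact space carrying an unbounded continuous function and then apply Principle \ref{slipper} to the constant sequence at that function, obtaining a finite supremum for an unbounded function, a contradiction. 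As in Theorem \ref{zirf}, the hypothesis $\QFAC^{0,1}_{\fin}$ is exactly what makes the space unbounded.

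The main obstacle is the ambient space: Principle \ref{slipper} is stated only for $M=\N^{\N}$, whereas the space witnessing unboundedness in the proof of Theorem \ref{zirf} is a proper subset of Baire space. I expect to resolve this by carrying out that construction on all of $\N^{\N}$ via a level functional. Let $\ell(f):=f(0)$ when $f=\langle f(0)\rangle*(g_0\oplus\dots\oplus g_{f(0)})$ with $(\forall i\leq f(0))\varphi(g_i,i)$, and $\ell(f):=0$ otherwise; put $d(f,g):=|\ell(f)-\ell(g)|$ and $f=_M g$ iff $\ell(f)=\ell(g)$. Then $F:=\lambda f.\,\ell(f)$ is $1$-Lipschitz and, by $\QFAC^{0,1}_{\fin}$, unbounded on $(\N^{\N},d)$, while $(\N^{\N},d)$ is sequentially compact: any sequence with unbounded levels would, by arithmetical comprehension, decode into a full witnessing sequence for $\varphi$, contradicting the failure of $\QFAC^{0,1}$, so every sequence has bounded levels and hence a constant subsequence. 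Feeding the constant sequence $F$ to Principle \ref{slipper} yields a real supremum for the unbounded $F$, the desired contradiction, and thus $\QFAC^{0,1}$. Together with the $\BOOT$ half this gives (II)$\Rightarrow$(I).
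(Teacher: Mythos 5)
Your proof is correct and follows the same decomposition the paper intends: the paper offers no explicit proof of Corollary \ref{stovocor} beyond the remark that ``the previous results combine'', meaning Theorems \ref{slab} and \ref{zirf} are to be put together exactly as you do. What your write-up adds is the observation that a bare citation of Theorem \ref{zirf} does not quite suffice: Principle \ref{slipper} only quantifies over spaces with $M=\N^{\N}$, while the space used in the paper to derive $\QFAC^{0,1}$ from boundedness (the space from the proofs of Theorems \ref{NNZ} and \ref{zirf}) is a \emph{proper} subset of Baire space, namely the set of correctly-parsing witness codes, so Principle \ref{slipper} cannot be applied to it as stated. Your level functional $\ell$ --- sending every non-parsing $f$ to level $0$, with $d(f,g)=|\ell(f)-\ell(g)|$ and $f=_M g\asa \ell(f)=\ell(g)$ --- transplants that construction onto all of $\N^{\N}$: the space remains unbounded thanks to $\QFAC^{0,1}_{\fin}$, remains sequentially compact because a sequence with unbounded levels would decode via $\mu^{2}$ into a witnessing sequence for $\varphi$ (while bounded levels yield a constant subsequence by the infinite pigeonhole principle, available in $\ACAo$), and applying Principle \ref{slipper} to the constant sequence at $F=\lambda f.\ell(f)$ gives the desired contradiction. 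The forward direction is also in order: $\QFAC^{0,1}$ yields boundedness of each $f_{n}$ by Theorem \ref{zirf}, and your direct use of $\BOOT$ on $Y(x,\langle n,j\rangle)$ to define the Dedekind cuts of the suprema (rather than, say, rescaling into $[0,1]$ and invoking Theorem \ref{slab}) is a legitimate and slightly more direct variant. In short: same approach as the paper, with a genuine gap in the paper's implicit argument correctly identified and filled.
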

As to related results, the \emph{Stone-Weierstrass theorem} is studied in second-order RM in e.g.\ \cite{browner}*{\S4}; results similar to Corollary \ref{stovocor} are possible, but not as elegant. 
Similarly, $\BOOT$ can be derived from the existence of an inverse for bi-Lipschitz functions on compact metric spaces.  Moreover, the \emph{intermediate value theorem} for chain connected and compact $(M, d)$ with $M\subset \N^{\N}$, is equivalent to $\QFAC^{0,1}$.   
We shall establish a more general result in Section \ref{zonggg}

\smallskip

In conclusion, the basic supremum principle as in Principle \ref{lipper} is equivalent to $\BOOT+\QFAC^{0,1}$ with equivalences for the individual principles as well.
By contrast, Principle \ref{lipper} formulated with second-order codes is provable in $\ACA_{0}$, as noted above.  
One could obtain analogous results for the existence of the distance function $d(x, C):= \inf_{y\in C}d(x, y)$, but the latter is an infimum anyway. 

\subsubsection{The supremum as a function}
In this section, we study a basic supremum principle on $\NN\equiv \N^{\N}\times \N^{\N}$ as in Principle \ref{lipper2}.  Note that we assume the existence of $\lambda x. \sup_{y\in \N^{\N}}f(x, y)$ as a function from Baire space to the reals. 
This notation is found throughout textbooks and the literature, as discussed in Section \ref{shintro}
\begin{princ}\label{lipper2}
 For any metric $d:\NN^{2}\di \R$ and sequentially compact metric space $(\NN, d)$, 
and Lipschitz continuous $f:\NN\di \R$, the supremum $\sup_{y\in \N^{\N}}f(x, y)$ and infimum $\inf_{y\in \N^{\N}}f(x, y)$ exist\footnote{To be absolutely clear, we assume that there is $\Phi:\N^{\N}\di (\N^{\N}\times \N^{\N})$ such that $\Phi(x)(1)=\sup_{y\in \N^{\N}}f(x, y) $ and $ \Phi(x)(2)=\inf_{y\in \N^{\N}}f(x, y)$ for any $x\in \N^{\N}$.} for any $x\in \N^{\N}$.
\end{princ}
We first show that Principle \ref{lipper2} implies the following generalisation of $\BOOT$.
%like \eqref{B} expressing the existence of a code for a metric space based on Baire space. 
\begin{princ}[$\BOOT_{2}$] For any $Y:(\N^{\N}\times \N^{\N})\di \N$, there is $ X\subset \N$ with 
\[
(\forall n\in \N  )[ n\in X\asa (\exists f\in \N^{\N})(\forall g\in \N^{\N})(Y(f,g, n)=0) ].
\]
\end{princ}
\noindent
We note that $\BOOT_{2}\di \FIVE$ over $\RCAo$ while $\ACAo+\BOOT_{2}$ proves $\SIX$.  
We now have the following implication.
\begin{thm}[$\ACAo+\SIND$]\label{NN2}
The supremum principle as in Principle \ref{lipper2} implies $\BOOT_{2}$.
We can restrict to Lipschitz continuity and we can replace `sequential' by `limit point' or `countable' or `open cover'.  % assuming $\QFAC^{0,1}$. 
%We can replace `sequential' by `limit point' or `countable' or `open cover'.  % assuming $\QFAC^{0,1}$.
%We can replace `separability' in the second item by `total boundedness', assuming additionally $\QFAC^{0,1}$???  
\end{thm}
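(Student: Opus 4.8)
The plan is to reuse the one-point-compactification template from the proof of Theorem \ref{NN}, but to exploit that the quantifier block $(\exists f)(\forall g)$ splits into two layers matching the two structural layers of Principle \ref{lipper2}: the supremum $\sup_{y\in\N^\N}f(x,y)$ \emph{over the second coordinate} will absorb the inner $(\forall g)$, while the fact (see the footnote to Principle \ref{lipper2}) that this supremum is delivered as a genuine \emph{function $\Phi$ of the first coordinate} $x$ will, after one further bootstrapping step, absorb the outer $(\exists f)$. Throughout I work in $\ACAo+\SIND$, so $(\exists^2)$ is available to decide $\Pi^0_1$ conditions on reals.

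First I would record that Principle \ref{lipper2} already implies $\BOOT$. Given $Z:(\N^\N\times \N)\di\N$, decode $n:=x(0)$ from the first coordinate, let the second coordinate play the role of the witness $g$, and put $G(x,g):=1\dminus\min(Z(g,x(0)),1)$, which is $\{0,1\}$-valued with $G(x,g)=1$ exactly when $Z(g,x(0))=0$. Equip $\NN$ with the pseudometric $d(p,p'):=|G(p)-G(p')|$, declaring $p=_{M}p'$ iff $G(p)=G(p')$; then $G$ is $1$-Lipschitz by construction, and $(\NN,d)$ is compact in every sense, since $G$ has finite range and its quotient is a two-point space. Applying Principle \ref{lipper2} yields $T(x):=\sup_{g}G(x,g)$ as a function of $x$, and $T(x)=1\asa(\exists g)(Z(g,x(0))=0)$. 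As the right-hand side is arithmetical in $\Phi$, $(\exists^2)$ lets me form $X:=\{n:T(\langle n\rangle)=1\}$, which is exactly $\BOOT$ for $Z$.

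Now for $\BOOT_{2}$ itself, fix $Y:(\N^\N\times\N^\N)\di\N$ with arguments $Y(f,g,n)$. Decode from the first coordinate $x$ both a number $n$ and an existential witness $f'$, let the second coordinate be the universal witness $g$, and this time set $F(x,g):=\min(Y(f',g,n),1)$, again $\{0,1\}$-valued, with the same metric $d(p,p'):=|F(p)-F(p')|$. The crucial point is that $\sup_{g\in\N^\N}F(x,g)=0$ holds precisely when $Y(f',g,n)=0$ for \emph{all} $g$, so the inner universal quantifier is detected by a single supremum; this works exactly because $Y$ is $\N$-valued, which keeps the range of $F$ finite and the space compact. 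Principle \ref{lipper2} provides $S(x):=\sup_{g}F(x,g)$ as a function of $x$, whence $S(x)=0\asa(\forall g)(Y(f',g,n)=0)$. I then define the type-$2$ functional $Y'(f',n):=0$ if $S(\langle n\rangle * f')=0$ and $Y'(f',n):=1$ otherwise, which is legitimate since deciding $S(\cdot)=0$ is a $\Pi^0_1$ condition on a real and hence available from $(\exists^2)$. Since $(\exists f')(Y'(f',n)=0)\asa(\exists f')(\forall g)(Y(f',g,n)=0)$, feeding $Y'$ into the instance of $\BOOT$ established above produces $X\subset\N$ with $n\in X\asa(\exists f')(\forall g)(Y(f',g,n)=0)$, i.e.\ exactly $\BOOT_{2}$.

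I expect the main obstacle to be conceptual rather than computational: one must see that the alternation $\exists f\,\forall g$ has to be split across the two structural layers of Principle \ref{lipper2} (supremum-over-$y$ for $\forall g$, function-of-$x$ together with a separate appeal to $\BOOT$ for $\exists f$), since a single supremum over a pairing $\langle f,g\rangle$ would only yield $\exists f\,\exists g$. The one genuinely delicate verification is that $\min(Y(f',g,n),1)$ really does give a Lipschitz function on a compact space; here I would lean on the fact that $\N$-valuedness of $Y$ forces $F$ (and $G$) to have range $\{0,1\}$, so the induced metric space collapses to two points and every compactness notion holds trivially, with $\SIND$ needed only, if at all, to extract finite subcoverings in the open-cover and countable variants, exactly as in Theorem \ref{NN}. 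The replacement of `sequential' by `limit point', `countable', or `open cover' is then immediate for the same reason.
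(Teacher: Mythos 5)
Your proof is correct, and its skeleton coincides with the paper's: the supremum-as-a-function-of-$x$ absorbs the inner quantifier $(\forall g)$, after which an instance of $\BOOT$ absorbs the outer $(\exists f)$. Within that shared skeleton your implementation is genuinely different, in three ways that each buy something. First, the paper reuses the one-point-compactification template of Theorem \ref{NN}: its metric is built from the values $\frac{1}{2^{G(\overline{f})}}$ on countably many equivalence classes, so detecting $(\forall g)(G(f,g)=n)$ requires pinning down \emph{constancy}, namely $\frac{1}{2^{n}}=\inf_{g}F(f,g)=\sup_{h}F(f,h)$, which uses both the supremum and the infimum supplied by Principle \ref{lipper2}; your indicator trick (make $F$ $\{0,1\}$-valued first, then test $\sup_{g}F(x,g)=0$) uses only the supremum half, so you in fact prove the formally stronger statement that the sup-only fragment of Principle \ref{lipper2} already implies $\BOOT_{2}$. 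Second, your two-point quotient is compact in all four senses for trivial reasons, so $\SIND$ is never invoked and your argument goes through over $\ACAo$ alone; the paper's countable quotient needs $\SIND$ to extract the finite subcoverings in the countable and open-cover variants. Third, the paper obtains $\BOOT$ by citing Theorem \ref{slab}, which strictly speaking concerns Principle \ref{lipper} on $M=\N^{\N}$ rather than Principle \ref{lipper2} on $\NN$; your first step derives $\BOOT$ directly from Principle \ref{lipper2}, which makes the argument self-contained and quietly repairs that looseness. What the paper's construction buys in exchange is uniformity: the same countable space is the template re-entered in Corollary \ref{NNcozz} and throughout the paper, whereas your degenerate space would have to be re-instantiated at each level (which, to be clear, works just as well, by iterating your indicator construction with $\BOOT_{2}$ in place of $\BOOT$).
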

\begin{proof}
First of all, in the same way as for $\BOOT\asa \RANGE$ in the proof of Theorem~\ref{NN}, the principle $\BOOT_{2}$ is equivalent to the following
\[
(\forall G:\NN\di \N)(\exists X\subset \N)(\forall n \in \N)\big[n\in X\asa (\exists f\in \N^{\N})(\forall g\in \N^{\N})(G(f, g)=n)  ].
\]
We will use `$\overline{h}$' to denote elements of $\NN$ and write $G(\overline{h})=G(f, g)$ in case $\overline{h}=(f, g)$, and where $\overline{0}:=(00\dots, 00\dots)$.

\smallskip

Now fix $G:\NN\di \N$ and define $d:(\NN\times \NN)\di \R$ as follows:  $d(\overline{f}, \overline{g}):=|\frac{1}{2^{G(\overline{f})}}-\frac{1}{2^{G(\overline{g})}}|$ for $\overline{f}, \overline{g}\ne \overline{0}$, 
$d(\overline{0}, \overline{f})=d(\overline{f}, \overline{0}):= \frac{1}{2^{G(\overline{f})}} $ for $\overline{f}\ne \overline{0}$, and $d(\overline{0}, \overline{0}):=0$. 
Then $(M, d)$ is a metric space for $M=\NN$ if we define the equivalence relation `$=_{M}$' as:
\[
\overline{f}=_{M}\overline{g}\equiv\big[~ [\overline{f},\overline{g}\ne \overline{0} \wedge G(\overline{f})=G(\overline{g})] \vee \overline{f}=\overline{g}=\overline{0}~\big].  
\]
Indeed, the first two items of Definition \ref{donkc} hold by definition while for the third item, observe that for $\overline{f},\overline{ g}\ne \overline{0}$, we have 
\[\textstyle
d(\overline{f}, \overline{g})=|\frac{1}{2^{G(\overline{f})}}-\frac{1}{2^{G(\overline{g})}}|\leq |\frac{1}{2^{G(\overline{f})}}-\frac{1}{2^{G(\overline{h})}}|+ |\frac{1}{2^{G(\overline{h})}}-\frac{1}{2^{G(\overline{g})}}| = d(\overline{f}, \overline{h})+d(\overline{h}, \overline{g}), 
\]
for $\overline{h}\ne \overline{0}$;  also $d(\overline{f},\overline{ g})=|\frac{1}{2^{G(\overline{f})}}-\frac{1}{2^{G(\overline{g})}}|\leq \frac{1}{2^{G(\overline{f})}}+\frac{1}{2^{G(\overline{g})}} = d(\overline{f}, \overline{0})+d(\overline{0}, \overline{g})$.  
Moreover, $d(\overline{f}, \overline{0})=\frac{1}{2^{G(\overline{f})}}\leq \frac{1}{2^{G(\overline{g})}} + |\frac{1}{2^{G(\overline{f})}}-\frac{1}{2^{G(\overline{g})}}| =d(\overline{g}, \overline{0})+d(\overline{f}, \overline{g})$ for $\overline{g}\ne \overline{0}\dots$; also $0=d(\overline{0}, \overline{0})\leq d(\overline{0}, \overline{f})+d(\overline{f}, \overline{0})$ for any $\overline{f}\in \NN$.

\smallskip

To show that $(M, d)$ is sequentially compact, let $(\overline{f}_{n})_{n\in \N}$ be a sequence in $M$.  In case $\overline{0}$ occurs infinitely many times or if $(\exists m\in \N)(\forall n\in \N)(G(\overline{f}_{n})\leq m)$, there is a trivial constant sub-sequence.  
In case $(\forall m\in \N)(\exists n\in \N)(G(\overline{f}_{n})> m)$, let $(\overline{g}_{n})_{n\in \N}$ be a sub-sequence with $G(\overline{g}_{n})>n$ for all $n\in \N$.  Clearly, $(\overline{g}_{n})_{n\in \N}$ converges to $\overline{0}$ in $(M, d)$.
To show that $(M, d)$ is (countably) compact, note that any open ball containing $\overline{0}$ covers all but finitely elements of $M$. 
We use $\SIND$ to enumerate the associated finite set.  
Similarly, for any infinite set $X\subset M$, $\overline{0} $ is a limit point of $X$.

\smallskip

Finally, consider $F:\NN\di \R$ defined as $F(\overline{f}):=\frac{1}{2^{G(\overline{f})}}$ for $\overline{f}\ne \overline{0}$ and $F(\overline{0})=0$.
By the definition of the metric, $F$ is Lipschitz.  Now consider, for any $n\in \N$:
\begin{align*}\textstyle
& (\exists f\in \N^{\N})(\forall g\in \N^{\N})(G(f, g)=n) \\
 &\textstyle \asa(\exists f\ne 00\dots)(\forall g \in \N^{\N})(F(f, g)=\frac{1}{2^{n}}) \vee (\forall g\in \N^{\N})(G(00\dots, g)=n)  \\
 &\textstyle \asa (\exists f\ne 00\dots)( \frac{1}{2^{n}}=\inf_{g \in \N^{\N} } F(f, g)=\sup_{h\in \N^{\N}}F(f, h) ) \vee (\forall g\in \N^{\N})(G(00\dots, g)=n).
\end{align*}
Applying $\BOOT$ (provided by Theorem \ref{slab}) to each disjunct of the final formula, we obtain $\BOOT_{2}$.
\end{proof}
Let $\BOOT_{k}$ for $k\geq 3$ be the obvious generalisation of $\BOOT_{2}$ to $k-1$ quantifier alternations.  
We note that $\BOOT_{k}\di \SIXk$ over $\RCAo$ while $\ACAo+\BOOT_{k}$ proves $\SIXko$.  
We now have the following corollary.
\begin{cor}[$\ACAo+\SIND$]\label{NNcozz}
The supremum principle as in Principle \ref{lipper2} implies $\BOOT_{k}$ for $k\geq 2$.
We can replace `sequential' by `limit point' or `countable' or `open cover'.  % assuming $\QFAC^{0,1}$. 
\end{cor}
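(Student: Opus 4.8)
The plan is to establish the slightly stronger statement that Principle \ref{lipper2} implies $\BOOT_{k}$ for \emph{every} $k\geq 2$, by (external) induction on $k$, the base case $k=2$ being exactly Theorem \ref{NN2}. The guiding idea for the inductive step is that one application of Principle \ref{lipper2} should \emph{remove the innermost function quantifier} of a $\BOOT_{k+1}$-instance while retaining the dependence on the remaining function parameters. This retention is the one feature that $\BOOT_{k}$ cannot itself supply, since $\BOOT_{k}$ only bootstraps over the number parameter $n$ and produces a mere subset of $\N$; the supremum/infimum operator of Principle \ref{lipper2}, by contrast, returns genuine \emph{functions} of the outer Baire-space variable. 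In this way I would reduce $\BOOT_{k+1}$ to $\BOOT_{k}$, which is available by the induction hypothesis.

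Concretely, assuming Principle \ref{lipper2} yields $\BOOT_{k}$, I would fix $Y$ with $Y(f_{1},\dots,f_{k+1},n)$ and aim for $X\subseteq\N$ with $n\in X\asa(\exists f_{1})(\forall f_{2})\cdots(Q f_{k+1})(Y(f_{1},\dots,f_{k+1},n)=0)$, the quantifiers alternating and $Q$ being $\exists$ or $\forall$ according to the parity of $k+1$. To collapse the innermost quantifier, I would code the remaining data as a single $x=\langle f_{1},\dots,f_{k},n\rangle\in\N^{\N}$, set $y:=f_{k+1}$, and define on $\NN=\N^{\N}\times\N^{\N}$ the two-valued function $\tilde F(x,y):=1$ if $Y(f_{1},\dots,f_{k},y,n)=0$ and $\tilde F(x,y):=0$ otherwise, with $f_{1},\dots,f_{k},n$ recovered from $x$ by projection. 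Equipping $\NN$ with $d(p,q):=|\tilde F(p)-\tilde F(q)|$ and the induced relation $=_{M}$ makes $\tilde F$ (vacuously extensional and) $1$-Lipschitz, while $(\NN,d)$ is sequentially compact because the range of $\tilde F$ is $\{0,1\}$, so every sequence has a subsequence constant in $M$. The same triviality gives limit point, countable, and open-cover compactness, which is precisely what will justify the final sentence of the corollary.

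Next I would apply Principle \ref{lipper2} to $\tilde F$, obtaining as genuine functions of $x$ both $\lambda x.\sup_{y}\tilde F(x,y)$ and $\lambda x.\inf_{y}\tilde F(x,y)$, and using $(\exists^{2})$ verify
\[
\textstyle \sup_{y}\tilde F(x,y)=1\asa(\exists f_{k+1})(Y(f_{1},\dots,f_{k+1},n)=0),\qquad \inf_{y}\tilde F(x,y)=1\asa(\forall f_{k+1})(Y(f_{1},\dots,f_{k+1},n)=0).
\]
Choosing the supremum or the infimum according to whether $Q$ is $\exists$ or $\forall$, I would define a functional $Y'$ by declaring $Y'(f_{1},\dots,f_{k},n)=0$ exactly when the chosen quantity equals $1$; this is a bona fide third-order object, assembled from the function supplied by Principle \ref{lipper2} together with pairing and $(\exists^{2})$. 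By construction $(Q f_{k+1})(Y(\vec f,n)=0)\asa Y'(f_{1},\dots,f_{k},n)=0$, so the target becomes $(\exists f_{1})(\forall f_{2})\cdots(Q f_{k})(Y'(f_{1},\dots,f_{k},n)=0)$, an instance of $\BOOT_{k}$; the induction hypothesis then delivers $X$, giving $\BOOT_{k+1}$.

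The step I expect to be the crux is the collapse itself: one must guarantee that the object handed to $\BOOT_{k}$ is an honest functional rather than a merely definable predicate, which is exactly why the detour through Principle \ref{lipper2}—yielding $\sup_{y}$ and $\inf_{y}$ as actual functions of $x$—is indispensable, and why the parity of $k+1$ must be tracked so as to apply $\sup$ for an innermost $\exists$ and $\inf$ for an innermost $\forall$. Everything else (Lipschitz continuity, sequential compactness, and extensionality of the two-valued $\tilde F$) is immediate, so that beyond the induction no strength is needed in the step past what $\ACAo+\SIND$ already provides through the base case.
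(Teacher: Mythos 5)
Your proof is correct, and at top level it performs the same reduction as the paper: one application of Principle \ref{lipper2} converts the innermost function quantifier into an arithmetical statement about the sup/inf functions, after which the previously obtained $\BOOT_{k}$ finishes the job (the paper proves the case $k=3$ using $\BOOT_{2}$ from Theorem \ref{NN2} and remarks that ``the general case is then straightforward'', i.e.\ the same induction you make explicit). The implementation, however, differs genuinely. The paper reuses the metric space of Theorem \ref{NN2} verbatim: it folds the outer variables into the first coordinate via $G(f,g):=H(P(f)(1),P(f)(2),g)$, keeps the $2^{-G}$-metric, and detects the innermost $\forall h$ through the equivalence $\frac{1}{2^{n}}=\inf_{h}F(f\oplus g,h)=\sup_{j}F(f\oplus g,j)$, so the parameter $n$ is read off from the \emph{value} of the supremum; the special point $\overline{0}$ then forces the extra disjunct $A(n)$ and a case split. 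You instead pull back the discrete metric along the characteristic function $\tilde F$ of the innermost predicate, fold $n$ into the \emph{argument} $x$, and track the parity of the innermost quantifier (sup for $\exists$, inf for $\forall$). What your route buys: the space has at most two equivalence classes, so sequential, limit point, countable and open-cover compactness are all immediate --- no $\SIND$ is needed in the inductive step and the final sentence of the corollary comes for free --- and there is no special-point disjunct. What the paper's route buys: nothing new has to be verified, since the metric axioms, compactness and the Lipschitz property are all quoted from Theorem \ref{NN2}, and the reduction is visible in a single displayed equivalence rather than through an auxiliary functional $Y'$. Your closing observation --- that the sup and inf must exist as honest \emph{functions} of $x$ for $\BOOT_{k}$ to be applicable at all, since $\BOOT_{k}$ only accepts genuine functionals and not definable $\Sigma^{1}_{1}$-predicates --- is exactly the crux, and it is the same point the paper exploits.
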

\begin{proof}
We prove $\BOOT_{3}$ from Principle \eqref{lipper2}; the general case is then straightforward. 
Fix $H:(\N^{\N}\times \N^{\N}\times \N^{\N})\di \N$ and define $G(f, g):= H(P(f)(1), P(f)(2), g)$ where $f=_{1}P(f)(1)\oplus P(f)(2)$.
We define $(M, d)$ as in the proof of Theorem \ref{NN2} using $G:\NN\di \N$; the former is again a sequentially compact metric space as in the proof of Theorem \ref{NN2}.  
The function $F:\NN\di \R$ from the proof of Theorem~\ref{NN2} is again (Lipschitz) continuous.
Let $A(n)$ be the formula $(\exists g \in \N^{\N})(\forall h\in \N^{\N})(H(00\dots , g, h)=n)$ and consider the following, for any $n\in \N$:
\begin{align*}\textstyle
& (\forall f\in \N^{\N})(\exists g\in \N^{\N})(\forall h\in \N^{\N})(H(f, g, h)=n) \\
 &\textstyle \asa(\forall f\ne_{1}00\dots)(\exists g \in \N^{\N})(\forall h\in \N^{\N})(F((f\oplus g, h))=\frac{1}{2^{n}}) \vee A(n)  \\ %\vee (\forall g\in \N^{\N})(G(00\dots, g)=n)  \\
 &\textstyle \asa (\forall  f\ne_{1}00\dots)(\exists g\in \N^{\N})\big[ \frac{1}{2^{n}}=\inf_{h \in \N^{\N} } F((f\oplus g, h))=\sup_{j\in \N^{\N}}F((f\oplus g, j)) \big]\vee A(n).
\end{align*}
Applying $\BOOT_{2}$ to both disjuncts in the final formula, we obtain $\BOOT_{3}$.
\end{proof}
The \emph{maximum principle} expresses that the supremum of a continuous function is attained on a compact space.  
The following principle expresses that $\max_{y\in M}f(x, y)$ and $\min_{y\in M}f(x, y)$ exist as $M\di M$-functions.  
The latter operators are found in \emph{continuous optimisation} with examples in \cite{coop} or \cite{plopp}*{p.\ 160, 173, and 194}.
\begin{princ}\label{lipper5}
 For any metric $d:\NN^{2}\di \R$ and sequentially compact metric space $(\NN, d)$, 
and continuous $f:\NN\di \R$, there is $\Phi:\NN\di \NN^{2}$ such that for all $x\in \NN$, 
\[
(\forall y\in M) \big[ f(x, \Phi(x)(1))\leq f(x, y) \leq_{\R} f(x, \Phi(x)(2))\big].
\]
\end{princ}
\noindent
One readily derives $\BOOT_{k}$ from the previous principle. 

\smallskip

In conclusion, the basic supremum principle as in Principle \ref{lipper2} implies $\BOOT_{k}$ and $\SIXk$ for $k\geq 3$.
By contrast, Principle \ref{lipper2} formulated with second-order codes is provable in $\ACA_{0}^{\omega}$ as $(\exists^{2})$ can define the supremum operator via the usual interval-halving technique; the code $A$ of the (complete separable) metric space $\hat{A}$ 
guarantees that quantifying over $\hat{A}$ can be replaced by quantifying over $A$, which is arithmetical by definition.

\subsubsection{The supremum as a functional}
In this section, we study a supremum principle stating the existence of a supremum functional $\lambda f. \sup_{x\in M}f(x)$ for continuous $M\di\R$-functions.
The latter amounts to the supremum norm $\|f\|_{\infty}$ on $C(M)$ as in \eqref{normy}.
In light of \cite{browner}*{Theorem 4.1}, $\ACAo$ proves the existence of such a supremum functional on compact metric spaces that come with a code.  
By contrast, $\Z_{2}^{\Omega}$, a conservative extension of $\Z_{2}$, is needed for the supremum functional.   
\begin{thm}[$\RCAo+\SIND$]\label{slabz}
The following are equivalent. 
\begin{enumerate}
\renewcommand{\theenumi}{\alph{enumi}}
\item Kleene's quantifier $(\exists^{3})$. 
\item The combination of the following.
\begin{itemize}
\item[(b.1)] Kleene's quantifier $(\exists^{2})$. 
\item[(b.2)] For any metric $d:(\N^{\N}\times \N^{\N})\di \R$ and sequentially compact $(M, d)$ with $M=\N^{\N}$, there is $\Phi:(M\di \R)\di \R$ such that for any continuous $f:M\di [0,1]$, we have $\Phi(f)=\sup_{x\in M}f(x)$.  
\end{itemize}
%\item $\lambda C.d(x, C)$ exists for closed $C$?
\end{enumerate}
We can restrict to Lipschitz continuity and replace `sequential' by `limit point' or `countable' or `open cover'.  % assuming $\QFAC^{0,1}$. 
%We can replace `separability' in the second item by `total boundedness', assuming additionally $\QFAC^{0,1}$???  
\end{thm}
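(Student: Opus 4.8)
The plan is to prove the two implications separately, with the reverse direction $(b)\Rightarrow(a)$ being the substantial one. For the forward direction $(a)\Rightarrow(b)$, I would first note that $(\exists^{3})$ trivially yields $(\exists^{2})$: given $f\in \N^\N$, apply the functional $E$ from $(\exists^{3})$ to $\lambda g.f(g(0))$, since $(\exists g\in \N^\N)(f(g(0))=0)\asa (\exists n\in \N)(f(n)=0)$. For item (b.2), fix a sequentially compact $(M,d)$ with $M=\N^\N$ and a continuous $f:M\di [0,1]$, and define $\Phi(f)$ to be the real whose lower cut is $\{q\in \Q : (\exists x\in M)(f(x)>_{\R}q)\}$. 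The inner predicate $f(x)>_{\R}q$ is $\Sigma_{1}^{0}$ in the representation of the real $f(x)$, hence decidable by $(\exists^{2})$, so the quantifier $(\exists x\in \N^\N)$ is decided by $(\exists^{3})$; thus the cut is available and $\Phi(f)=\sup_{x\in M}f(x)$. Since $f\mapsto \Phi(f)$ is a single term in the functional of $(\exists^{3})$ (and $(\exists^{2})$), this defines the required supremum functional, giving (b).

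For $(b)\Rightarrow(a)$, I would reuse the construction of Theorem \ref{NN}. Given an arbitrary $Y:\N^\N\di \N$, define $d_{Y}(f,g):=|\tfrac{1}{2^{Y(f)}}-\tfrac{1}{2^{Y(g)}}|$ on $\N^\N$, collapsing $00\dots$ to the limit of all sequences along which $Y\to \infty$ exactly as in the proof of Theorem \ref{NN}; quotienting by $f=_{M}g\equiv Y(f)=Y(g)$ (with $00\dots$ separate) turns $(M,d_{Y})$ into a genuine metric space which is sequentially compact, and, via $\SIND$, also countably, limit-point and open-cover compact, by the verbatim argument there. Put $f_{Y}(x):=\tfrac{1}{2^{Y(x)}}$, which lands in $[0,1]$ and is Lipschitz for $d_{Y}$ by the very definition of the metric. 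The key identity is $\sup_{x\in M}f_{Y}(x)=\tfrac{1}{2^{\inf_{x}Y(x)}}$, so that $\sup_{x\in M}f_{Y}(x)=_{\R}1$ iff $(\exists x\in \N^\N)(Y(x)=0)$. Applying the supremum functional $\Phi$ supplied by (b.2) for $d_{Y}$ and comparing $\Phi(f_{Y})$ with $1$ by means of $(\exists^{2})$ therefore decides $(\exists f)(Y(f)=0)$. The restriction to Lipschitz continuity, the passage from $M=\N^\N$ to $M\subset \N^\N$, and the replacement of `sequential' by the other compactness notions all go through as in Theorems \ref{NN} and \ref{NN2}.

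I expect the main obstacle to be neither the metric construction nor the computation of $\sup f_{Y}$, but the passage from this \emph{pointwise} decision procedure to the single third-order functional $E$ demanded by $(\exists^{3})$. Indeed, (b.2) only asserts that for \emph{each} metric \emph{some} supremum functional exists, so the assignment $Y\mapsto \Phi_{d_{Y}}$ is a priori non-uniform; what is canonical is solely the value $\sup_{x}f_{Y}(x)$, which is uniquely determined. This is precisely where the functional form of the supremum principle does more than its sequential counterpart: whereas Theorem \ref{slab} extracted, for each fixed $Y$, only a single set and hence reached merely $\BOOT$, here the uniformly constructed data $Y\mapsto (d_{Y},f_{Y})$ must be fed \emph{through} the supremum functional rather than through its values, so as to produce one functional $E$ deciding $(\exists f)(Y(f)=0)$ for all $Y$ at once. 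It is exactly this uniformisation that boosts the strength all the way to $(\exists^{3})\equiv \Z_{2}^{\Omega}$, and delivering it in the formal system is the delicate step; the analogous device for Principle \ref{lipper2} in the ambient paper should apply here as well.
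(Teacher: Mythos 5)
Your route is the paper's: the forward direction computes suprema from $(\exists^{3})$ and $(\exists^{2})$ (Dedekind cuts versus the paper's interval-halving is immaterial), and the reverse direction feeds the metric space of Theorem \ref{NN} together with $2^{-Y(\cdot)}$ into the supremum functional. But there is a concrete slip in the reverse direction: taking $f_{Y}(x):=\frac{1}{2^{Y(x)}}$ at \emph{every} point, including $00\dots$, your $f_{Y}$ is in general not even continuous, let alone Lipschitz. Indeed, if $Y$ is unbounded on $M\setminus\{00\dots\}$, choose $x_{n}$ with $Y(x_{n})\to\infty$; then $d_{Y}(x_{n},00\dots)=2^{-Y(x_{n})}\to 0$ while $f_{Y}(x_{n})\to 0\ne 2^{-Y(00\dots)}=f_{Y}(00\dots)$, so (b.2) need not apply to $f_{Y}$ at all. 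The paper sets $f(00\dots):=0$, which restores Lipschitz continuity but invalidates your ``key identity'' $\sup_{x\in M}f_{Y}(x)=2^{-\inf_{x}Y(x)}$: the supremum no longer registers the value $Y(00\dots)$, and the correct decision formula is the disjunction $(\exists g\in \N^{\N})(Y(g)=0)\asa\big[\sup_{x\in M}f_{Y}(x)=_{\R}1\,\vee\,Y(00\dots)=0\big]$, i.e.\ precisely the shape of \eqref{zenda} (the constant $\frac12$ there appears to be a typo for $1$, which is immaterial); the right-hand side is then decided by $(\exists^{2})$. This part is easily repaired.

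The serious problem is the step you flag and then leave open: it is a genuine gap, and your pointer to its resolution is wrong. You are right that, reading (b.2) as ``for every metric $d$ there is \emph{some} $\Phi_{d}$'', the assignment $Y\mapsto \Phi_{d_{Y}}$ is not available as an object of the theory, so the pointwise decisions do not assemble into the single functional $E$ that $(\exists^{3})$ demands. But no ``analogous device for Principle \ref{lipper2}'' exists: Theorem \ref{NN2} derives $\BOOT_{2}$, which is itself a pointwise $(\forall Y)(\exists X)$ principle, so that proof (like those of Theorems \ref{NN} and \ref{slab}) never has to uniformise anything. Nor can the issue be dodged by one cleverly chosen metric: any fixed sequentially compact metric space admits a countable dense sequence $(x_{n})$, whence its supremum functional on continuous functions is computable from $\exists^{2}$ and $(x_{n})$, and no functional computable from such data can decide $(\exists f\in \N^{\N})(Y(f)=0)$ for all $Y$ (Kleene's $\exists^{3}$ is not computable in any type-two functional). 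So under your pointwise reading the implication (b)$\Rightarrow$(a) is not merely ``delicate'' but unreachable by any argument of this shape. What makes the paper's terse conclusion legitimate is that (b.2) is invoked as supplying \emph{one} functional $\Phi$ computing $\sup_{x\in M}f(x)$ --- a quantity that does not depend on $d$ --- so that the same $\Phi$ applies to $f_{Y}$ for every $Y$, and $E(Y):=$ the $(\exists^{2})$-verdict on $\big[\Phi(f_{Y})=_{\R}1\vee Y(00\dots)=0\big]$ is a term of the theory. Your proposal is thus incomplete at exactly the step that produces all the strength, and completing it requires taking the supremum functional in (b.2) uniformly in the metric, not metric-by-metric.
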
 
\begin{proof}
The `downward' implication is straightforward as the usual interval-halving technique using $(\exists^{3})$ readily yields $\sup_{x\in M}f(x)$ for bounded $f:M\di \R$.  
For the upward implication, fix $G:\N^{\N}\di \N$ and consider the metric space $(  M, d)$ from the proof of Theorem \ref{slab}. 
Let $f:M\di \R$ be the function $f(x):=\frac{1}{2^{G(x)}}$ for $x\ne_{M}00\dots$ and $f(00\dots)=0$, which is Lipschitz continuous as in the proof of the latter.  Now note that 
\be\textstyle\label{zenda}
(\exists g\in \N^{\N})(G(g)=0)\asa [\sup_{x\in \N^{\N}}f(x)=_{\R}\frac{1}{2}   \vee G(00\dots)=0] .
\ee
The right-hand-side of \eqref{zenda} is decidable using $(\exists^{2})$, i.e.\ $(\exists^{3})$ follows from item (b).  
The final sentence is immediate in light of Theorem \ref{zirf} and its proof.  
%The final sentence follows by Theorem \ref{zirf}. 
\end{proof}
The restriction to bounded functions in Theorem \ref{slabz} can be lifted, as follows.
\begin{cor}
The theorem holds for the generalisation to $f:M\di \R$ and $M\subset \N^{\N}$ in item \textup{(b.2)}, if we add $\QFAC^{0,1}$ to item \textup{(a)} and $\QFAC^{0,1}_{\fin}$ to item \textup{(b)}. 
\end{cor}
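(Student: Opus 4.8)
The plan is to establish the new equivalence by combining the proof of Theorem~\ref{slabz} with the boundedness--choice equivalence of Theorem~\ref{zirf}, treating the two halves of the biconditional separately. Write (a$'$) for $(\exists^{3})+\QFAC^{0,1}$ and (b$'$) for the combination of $(\exists^{2})$, $\QFAC^{0,1}_{\fin}$, and the generalised supremum functional (now for $f:M\di\R$ with $M\subset\N^{\N}$).

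For the downward implication (a$'$)$\di$(b$'$), assume $(\exists^{3})$ together with $\QFAC^{0,1}$. Then $(\exists^{2})$ is immediate and $\QFAC^{0,1}_{\fin}$ follows from $\QFAC^{0,1}$, so only the generalised item (b.2) needs work. Given a metric $d$ and sequentially compact $(M,d)$ with $M\subset\N^{\N}$, Theorem~\ref{zirf} (using $\QFAC^{0,1}$) guarantees that every continuous $f:M\di\R$ is bounded, so that $\sup_{x\in M}f(x)$ exists as a real. With $(\exists^{2})$ one computes, for each dyadic $q$, the characteristic function of $M$ and the truth value of $f(x)>q$, so that $(\exists x\in M)(f(x)>q)$ becomes a single query to $(\exists^{3})$; the usual interval-halving then pins down $\sup_{x\in M}f(x)$ uniformly in $f$, which defines the required functional $\Phi$.

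For the upward implication (b$'$)$\di$(a$'$), assume $(\exists^{2})$, $\QFAC^{0,1}_{\fin}$, and the generalised supremum functional. Restricting the latter to $f:M\di[0,1]$ with $M=\N^{\N}$ recovers exactly item (b.2) of Theorem~\ref{slabz}, so together with $(\exists^{2})$ this yields $(\exists^{3})$ by that theorem. It remains to derive $\QFAC^{0,1}$. The very assertion that $\Phi(f)=\sup_{x\in M}f(x)$ is a real for every continuous $f:M\di\R$ forces each such $f$ to be bounded, which is precisely item (b.2) of Theorem~\ref{zirf}; combined with $\QFAC^{0,1}_{\fin}$, that theorem then delivers $\QFAC^{0,1}$. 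Equivalently one argues by contradiction as in the proof of Theorem~\ref{zirf}: if $\QFAC^{0,1}$ failed, the space $(M,d)$ and the continuous map $F(f):=f(0)$ from that proof would be unbounded (using $\QFAC^{0,1}_{\fin}$), whence $\Phi(F)$ could not be a real. The Lipschitz and alternative-compactness variants go through verbatim, as in Theorem~\ref{slabz}.

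The main obstacle is the downward direction: the functional $\Phi$ must be \emph{total} on all continuous $f:M\di\R$, and this is possible only because $\QFAC^{0,1}$ (via Theorem~\ref{zirf}) rules out unbounded continuous functions in the first place. In other words, $(\exists^{3})$ alone supplies the search needed to \emph{locate} the supremum but not the boundedness that makes it \emph{exist}; this is exactly why $\QFAC^{0,1}$ must be added to item (a). The upward direction is comparatively routine, since both $(\exists^{3})$ and $\QFAC^{0,1}$ are extracted by specialising or merely reading off the hypotheses of the generalised principle.
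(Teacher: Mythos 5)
Your proposal is correct and matches the paper's intended argument: the paper's own proof is simply ``The corollary follows by Theorem \ref{zirf}'', and your two directions spell out exactly that reduction --- using $\QFAC^{0,1}$ via Theorem \ref{zirf} to secure boundedness (hence existence of the supremum, which $(\exists^{3})$ then locates by interval-halving), and conversely reading off item (b.2) of Theorem \ref{zirf} from the totality of $\Phi$ to recover $\QFAC^{0,1}$ from $\QFAC^{0,1}_{\fin}$, while the $(\exists^{3})$ part reduces to Theorem \ref{slabz} by restriction. Your closing observation that $\QFAC^{0,1}$ is what makes the supremum \emph{exist} while $(\exists^{3})$ merely \emph{finds} it is precisely the point of the added hypotheses.
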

\begin{proof}
The corollary follows by Theorem \ref{zirf}. 
\end{proof}
In conclusion, the supremum principle as in item (b.2) in Theorem~\ref{slabz} is equivalent to $(\exists^{3})$.
In light of \cite{browner}*{Theorem 4.1}, $\ACAo$ proves the existence of such a supremum functional on compact metric spaces that come with a code.  

\subsection{Second-countability and metric spaces}\label{sleepwithoneeye}
We study the statement 
\begin{center}
\emph{compact metric spaces are second-countable, i.e.\ have a countable base}
\end{center}
and show that it \emph{can} be quite strong, implying Feferman's projection principle and countable choice. 

\smallskip

First of all, the strength of the previous centred statement depends on which definition of \emph{countable base} one uses (compare \cite{munkies}*{p.\ 78} and \cite{bengelkoning}*{p.~12}).  
For this reason, we adopt Definition \ref{SCC} where SC1 corresponds to the coding of open sets in second-order RM (\cite{simpson2}*{II.5.6}) and SC2 corresponds to Dorais' CSC-spaces from second-order RM (\cite{damurm}*{\S10.8.1}).
\bdefi[Second-countability]\label{SCC} 
Let $(M, d)$ be a metric space.  We say that:   
\begin{itemize}
\item[(SC1)] $(M, d)$ is SC1 in case there is a sequence of open sets $(O_{n})_{n\in \N}$ such that for every open $O\subset M$, there is $g\in \N^{\N}$ with $O=\cup_{n\in \N}O_{g(n)}$,
\item[(SC2)] $(M, d)$ is SC2 in case there is a sequence of open sets $(O_{n})_{n\in \N}$ satisfying: 
\begin{itemize}
\item for any $x\in M$, there is $n\in \N$ with $x\in O_{n}$,
\item for any $x\in M$, if $x\in O_{n}\cap O_{m}$, there is $k\in \N$ with $x\in O_{k}\subset O_{n}\cap O_{m}$.
\end{itemize}
\end{itemize}
\end{defi}
By Theorem \ref{SCS1}, $\BOOT+\QFAC^{0,1}$ follows from the basic statement \emph{a compact metric space is SC1} and special cases.  
By contrast, Theorem \ref{zibk} shows that many of the above results still go through if we restrict to SC2-spaces. 
%We also obtain countable choice as in $\QFAC^{0,1}$ from the aforementioned centred statement.  

\smallskip

Secondly, the following principle expresses that sequentially compact metric spaces are SC1.  
The associated principle for open sets of reals is studied in \cite{dagsamVII}, where it is conjectured to be weaker than $\BOOT$. 
\begin{princ}[$\open_{0}$]\label{lack0}
For any metric $d:(\N^{\N}\times \N^{\N})\di \R$ and sequentially compact $(M, d)$ with $M\subseteq\N^{\N}$, and for any open set $O\subset M$, there are sequences $(x_{n})_{n\in \N}$ and $(r_{n})_{n\in \N}$ such that $O=\cup_{n\in \N}B_{d}^{M}(x_{n}, r_{n})$.  
\end{princ}
The countable union in $\open_{0}$ amounts to the representation of open sets in second-order RM as in \cite{simpson2}*{II.5.6}.  
Now, the latter representation can be effectively converted into another representation based on continuous functions involving a continuous modulus (see \cite{simpson2}*{II.7.1} and \cite{kohlenbach4}*{Theorem 4.4}).
Inspired by this observation, we let $\open_{1}$ be $\open_{0}$ restricted to open $O\subset M$ for which there is a continuous $h:M\di \R$ with a continuous modulus such that $x\in O\asa h(x)>0$ for any $x\in M$.
Moreover, we have studied a different coding in \cite{dagsamVII}*{\S6-7}, as follows.  
\bdefi[R2-representation]
An open set $O\subset M$ in a metric space $(M, d)$ has an \emph{R2-representation} if there is $\Psi:M\di \R$ such that $x\in O\asa \Psi(x)>0$ and $x\in O\di B_{d}^{M}(x, \Psi(x))\subset O$ for any $x\in M$.
\edefi
Let $\open_{2}$ be Principle \ref{lack0} restricted to open sets with an R2-representation. 
The principle $\open_{2}$ for the real line is rather weak in light of \cite{dagsamVII}*{\S6-7}.
Despite the differences among the principles $\open_{i}$, we have the following theorem.
\begin{thm}[$\ACAo+\SIND$]\label{SCS1}
For $i=0, 1,2$, $\open_{i}$ implies $\BOOT+\QFAC^{0,1}$.
We may replace `sequentially' by `limit point' or `countably' in the former principles. 
\end{thm}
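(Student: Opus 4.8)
The plan is to reduce $\BOOT$ to $\RANGE$ exactly as in the first paragraph of the proof of Theorem~\ref{NN}, and then, for a fixed $G:\N^{\N}\di\N$, to build a single sequentially compact space $(M,d)$ and a single open set $O\subset M$ such that: (i) $O$ carries both an R2-representation and a continuous-function-with-modulus representation, so that each of $\open_{0},\open_{1},\open_{2}$ applies to it; and (ii) any ball-decomposition $O=\cup_{n}B_{d}^{M}(x_{n},r_{n})$ is forced to consist of singletons, so that the centres $(x_{n})_{n\in\N}$ enumerate $O$. Reading $\RANGE$ (and then $\QFAC^{0,1}$ as in Theorem~\ref{NN}) off this enumeration then finishes the proof.

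The obstacle dictating the construction is a ``big ball'' phenomenon: in the template space of Theorem~\ref{NN} the isolated points $p_{n}$ (the class $G(\cdot)=n$) sit at height $\frac{1}{2^{n}}$ and accumulate at $00\dots$ from one side only, so a single ball such as $B_{d}^{M}(p_{0},1)$ already covers every $p_{n}$; from such a coarse decomposition one cannot decide $\range(G)$. I would therefore augment the template with \emph{always-present shield points}. Define $\pi:\N^{\N}\di\R$ by $\pi(00\dots)=0$; for $f\ne 00\dots$ with $f(0)$ even, say $f(0)=2k$, put $\pi(f)=\frac{3}{2^{k+2}}$ (a shield of index $k$, present for every $k$); and for $f(0)$ odd put $\pi(f)=\frac{1}{2^{\tilde G(f)}}$, where $\tilde G(f):=G(\mathrm{shift}(f))$ and $\mathrm{shift}$ drops the first entry of $f$. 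Set $d(f,g):=|\pi(f)-\pi(g)|$ and $f=_{M}g:\asa \pi(f)=\pi(g)$. Since $\mathrm{shift}$ is onto $\N^{\N}$, the realised range-heights are exactly $\{\frac{1}{2^{n}}:n\in\range(G)\}$, while the shield-heights $\frac{3}{2^{k+2}}\in(\frac{1}{2^{k+1}},\frac{1}{2^{k}})$ are present for all $k$. That $(M,d)$ is a metric space is immediate from the triangle inequality in $\R$, and sequential (as well as limit point, countable and open-cover) compactness follows exactly as in Theorem~\ref{NN}: only finitely many positions exceed a fixed $\eps>0$, so any sequence either repeats a height or has a subsequence converging to $00\dots$; here $\SIND$ and $(\exists^{2})$ suffice both for the finite enumerations and to define $\pi,d$ from $G$.

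Now take $O:=\{f:f(0)\textup{ odd}\}$, the set of range points. Each range point at height $\frac{1}{2^{n}}$ has the shield $\frac{3}{2^{n+2}}$ as nearest neighbour, at distance $\frac{1}{2^{n+2}}$, so $O$ is open and the functional
\[
\Psi(x):=\begin{cases}\tfrac{1}{2^{n+2}} & \textup{if }\pi(x)=\tfrac{1}{2^{n}}\\[1mm] 0 & \textup{otherwise}\end{cases}
\]
is Lipschitz (indeed $\Psi(x)\le \tfrac14 d(x,00\dots)$) and satisfies $x\in O\asa\Psi(x)>0$ together with $x\in O\di B_{d}^{M}(x,\Psi(x))\subset O$; hence $O$ has both an R2-representation and an $\open_{1}$-style representation, and whichever $\open_{i}$ we assume delivers $O=\cup_{n}B_{d}^{M}(x_{n},r_{n})$. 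The crux (item (ii)) is that each nonempty ball is forced to be a singleton: its centre $x_{n}$ lies in $O$ (else $x_{n}\in B_{d}^{M}(x_{n},r_{n})\subset O$ fails), say $\pi(x_{n})=\frac{1}{2^{m}}$; since the shield $\frac{3}{2^{m+2}}\notin O$ must be excluded, $r_{n}\le\frac{1}{2^{m+2}}$, the distance to the nearest neighbour, whence $B_{d}^{M}(x_{n},r_{n})=\{x_{n}\}$. Thus $(x_{n})_{n\in\N}$ enumerates $O$, and for every $n\in\N$
\[
(\exists f\in\N^{\N})(G(f)=n)\asa (\exists m\in\N)(\tilde G(x_{m})=n),
\]
whose right-hand side is arithmetical; so $\RANGE$, hence $\BOOT$, follows. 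Finally $\QFAC^{0,1}$ is obtained as in Theorem~\ref{NN}: assuming $(\forall n)(\exists f)(G(f)=n)$, the sequence $n\mapsto\mathrm{shift}(x_{m})$ for the least $m$ with $\tilde G(x_{m})=n$ (found via $(\exists^{2})$) witnesses the required choices.

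The main difficulty is entirely in item (ii): one must defeat the coarse coverings available in a one-sidedly accumulating space, and the shields are exactly what pin each ball down to a single point, so that the centre sequence cannot ``skip over'' an element of $O$. Everything else---the metric axioms, the several compactness notions, and the definability of $\pi,d,\Psi,O$ from $G$---is routine over $\ACAo+\SIND$, and replacing `sequentially' by `limit point' or `countably' compact requires no change, since $(M,d)$ enjoys all these notions at once.
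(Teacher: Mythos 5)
Your proof is correct and is essentially the paper's own argument: the paper likewise reduces $\BOOT$ to $\RANGE$ and, from $G$, builds a compact ``height space'' in which always-present shield points (there, the rationals adjoined at heights $1/2^{2n+1}$ via a coded disjoint union $\{0_{M}\}\cup\Q\cup\N^{\N}$ with limit point $0_{M}$; here, your even-first-entry points at heights $3/2^{k+2}$) interleave the $G$-dependent heights, so that every nonempty ball inside the open set $O$ of $G$-points is a single $=_{M}$-class, the centres of any ball decomposition enumerate $O$, and $\RANGE$ plus $\QFAC^{0,1}$ are read off arithmetically, with the same kind of $\Psi$-witnesses for $\open_{1}$ and $\open_{2}$. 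The differences (realising the shields inside $\N^{\N}$ by parity of the first entry rather than adjoining $\Q$, and the particular shield heights) are cosmetic rather than a genuinely different route.
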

\begin{proof}
Fix $G:\N^{\N}\di \N$ and let $(q_{n})_{n\in \N}$ be an enumeration of the set of rationals $\Q$ without repetitions. We also let $0_{M}$ be a new symbol, different by fiat from any rational or element of Baire space.  
Up to coding, we define $M$ as the union of $\{0_{M}\}$, $\Q$, and $\N^{\N}$.  For $x\in M$, define $H(0_{M})=0$, $H(x):= 2n+1$ if $x=q_{n}\in \Q$, and $H(x)=2G(x)$ if $x\in \N^{\N}$.
Now define $d:M^{2}\di \R$ as follows for $x, y\in M$:  $d(0_{M}, 0_{M})=0$, $d(x, 0_{M})=\frac{1}{2^{H(x)}}$ for $x\ne 0_{M}$, and $d(x, y):= |\frac{1}{2^{H(x)}}-\frac{1}{2^{H(y)}}|$ for $x, y\ne 0_{M}$. 
As in the above proofs, $(M, d)$ is a metric space that is sequentially, limit point, and (countably) compact.  

\smallskip

Define the set $C\subset M$ as the union of $\{0_{M}\}\cup \Q$.  Taking convergence to be within $(M, d)$, we note that a sequence with elements in $C$ that converges, must converge to $0_{M}$ or be eventually constant, i.e.\ $C$ is closed.
Define the open set $O:=M\setminus C$ and let $(x_{n})_{n\in \N}$ and $(r_{n})_{n\in \N}$ be such that $O=\cup_{n\in \N}B_{d}^{M}(x_{n}, r_{n})$, as provided by $\open_{0}$.   
Now consider the following
\[
(\exists x\in \N^{\N})(G(x)=n)\asa  [(\exists i\in \N)( G(x_{i})=n) \vee G(00\dots)=n],
\]
which follows by the definition of $(M, d)$.  Thus, $\RANGE$ and $\QFAC^{0,1}$ follow from $\open_{0}$ in the same way as in the proof of Theorem \ref{NN2}.  
To obtain the same result for $\open_{2}$, define $\Psi:M\di \R$ as follows:  $\Psi(x)=\frac{1}{2^{H(x)+5}}$ if $x\in O$ and $\Psi(x)=0$ otherwise. 
This constitutes an R2-representation of $O$, as required for $\open_{2}$.  
Indeed, we have $x\in O\asa \Psi(x)>0$ by definition, while $x\in O$ implies $\{x\}=B(x, \Psi(x))\subset O$.  
For $\open_{1}$, we observe that $\Psi$ is continuous on $M$ with modulus $\Phi(x, k)=\frac{1}{2^{H(x)+k+5}}$, which in turn is continuous in $x$ on $M$ for any $k\in \N$.
This again follows from the fact that $d(x, y)<\frac{1}{2^{H(x)+5}}$ implies $x=_{M}y$.
\end{proof}
The following generalisation of $\open_{0}$ implies $\BOOT_{k}$ for $k\geq 3$, following the proof of Corollary \ref{NNcozz}.
Note that for an open set $O\subset M\subset \NN$, the projection $O_{f}:=\{ g\in \N^{\N}: (f, g)\in O   \}$ is also open. 
\begin{princ}\label{lipperk}
 For any metric $d:\NN^{2}\di \R$ and sequentially compact metric space $(M, d)$ with $M\subset \NN$, there exists $\Phi$ such that for any $f\in \N^{\N}$ and open $O\subset M$ $\Phi(C, f)$ is a code for the projection $O_{f}$. 
\end{princ}
Finally, we show that some of our results still go through for SC2-spaces.  
\begin{thm}[$\ACAo$]\label{zibk}
Theorem \ref{slab} goes through restricted to SC2-spaces. 
\end{thm}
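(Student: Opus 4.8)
The plan is to re-run the proof of Theorem \ref{slab}, observing that the single metric space witnessing its forward direction is already SC2; the restriction to SC2-spaces therefore costs nothing. The easy direction is immediate: by Theorem \ref{slab}, $\BOOT$ proves the unrestricted Principles \ref{lipper} and \ref{slipper}, and restricting the class of spaces only weakens these principles, so $\BOOT$ implies both SC2-restricted versions. The work is thus to show that the SC2-restricted Supremum Principle (and its sequential form) still imply $\BOOT$.

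To this end, fix $G:\N^{\N}\di \N$ and let $(M, d)$ with $M=\N^{\N}$ be exactly the space from the proof of Theorem \ref{slab}, together with the Lipschitz function $f(x)=\frac{1}{2^{G(x)}}$ and $f(00\dots)=0$. Recall this space is a convergent sequence with limit $00\dots$: every class $x\ne_{M}00\dots$ is isolated, its nearest neighbour lying at distance at least $\frac{1}{2^{G(x)+1}}$. The key step is to exhibit, using $(\exists^{2})$, an explicit countable base witnessing SC2. For $n,k\in\N$ set
\[
P_{n}:=\{x\in M: x\ne_{M}00\dots \wedge G(x)=n\}, \qquad Q_{k}:=B_{d}^{M}(00\dots, \tfrac{1}{2^{k}}).
\]
Each $Q_{k}$ is an open ball, and each $P_{n}$ is open: it is either empty (when level $n$ is not realised) or the singleton of an isolated point, in which case $B_{d}^{M}(x,\frac{1}{2^{n+1}})=P_{n}$. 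Crucially, this family is defined uniformly in $n$ from $G$, so no enumeration of the realised $G$-levels — which would itself require $\BOOT$-strength — is needed; unrealised levels simply contribute the harmless empty set.

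It remains to check the two clauses of Definition \ref{SCC}. Covering is clear since $00\dots\in Q_{0}$ and any $x\ne_{M}00\dots$ lies in $P_{G(x)}$. For the intersection-refinement clause, observe that the $Q_{k}$ are nested, so $Q_{k}\cap Q_{j}=Q_{\max(k,j)}$ is again in the family; that distinct $P_{n}$ are disjoint and each is a singleton, so for a point in an intersection $P_{n}\cap O_{m}$ the set $P_{n}$ is itself a refining base neighbourhood; and that $P_{n}\cap Q_{k}$ equals $P_{n}$ or $\emptyset$. Hence $(P_{n})_{n}$ together with $(Q_{k})_{k}$, enumerated as one sequence, witnesses that $(M,d)$ is SC2, and this base moreover generates the metric topology, so the conclusion holds under any reasonable reading of Definition \ref{SCC}.

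With $(M,d)$ now known to be SC2, applying the SC2-restricted Supremum Principle to $f$ and $x_{0}=00\dots$ yields the sequence $(y_{n})_{n\in\N}$ with $y_{n}=\sup_{z\in\overline{B}_{d}^{M}(x_{0},\frac{1}{2^{n}})}f(z)$ exactly as before, and \eqref{soft} gives $\RANGE$, hence $\BOOT$; the sequential form is handled by the same modification used in Theorem \ref{slab}. The main obstacle is precisely the construction of the base within $\ACAo$: the non-limit points cannot be listed, and the resolution is to index the base by all $n\in\N$ and let unrealised levels yield the empty set, so that the base is definable directly from $G$ via $(\exists^{2})$ while still generating the correct topology. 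This is also what lets the argument proceed over $\ACAo$ without the instance of $\SIND$ that Theorem \ref{slab} used for the non-sequential compactness variants.
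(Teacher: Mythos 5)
Your proposal is correct and takes essentially the same route as the paper: the paper's proof likewise re-uses the metric space built from $G^{2}$ in the proof of Theorem \ref{slab} and exhibits the countable base $O_{2n}=\{x\in M: G(x)=n\}$, $O_{2n+1}=B_{d}^{M}(00\dots,\frac{1}{2^{n}})$ (exactly your $P_{n}$ and $Q_{k}$), verifying the two SC2 clauses by the same nestedness/singleton observations. Your write-up is if anything slightly more careful, since excluding $00\dots$ from the level sets is needed for their openness, but the argument is the same.
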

\begin{proof}
The proof of Theorem \ref{slab} makes use of the metric space $(M, d)$ defined above \eqref{bookana} based on $M=\N^{\N}$ and $G^{2}$.  
It suffices to prove that this metric space is SC2.  
To this end, let $(O_{n})_{n\in \N}$ be the following sequence of open sets: $O_{2n}$ is $\{x\in M:  G(x)=n\}$ while $O_{2n+1}$ is $B_{d}^{M}(00\dots, \frac{1}{2^{n}})$. 
%Now let $O$ be any open set and consider the following case distinction.  In case $00\dots \in O$, there is $m\in \N$ such that $B_{d}^{M}(00\dots, \frac{1}{2^{m}})\subset O$.  
This sequence forms a countable base as in SC2, as $(\forall x\in M)(\exists n\in \N)(x\in O_{n})$ is trivial.  In case $x\in O_{n_{1}}\cap O_{n_{2}}$, define $k:(M\times \N^{2})\di \N$ as follows: $k(x, n_{1}, n_{2})=\max(n_{1}, n_{2})$ if $n_{1}, n_{2}$ are odd and define $k(x, n_{1}, n_{2})=n_{1}$ in case $n_{1}$ is even, and $n_{2}$ otherwise.  By definition, we have $x\in O_{k(x, n_{1}, n_{2})}\subset O_{n_{1}}\cap O_{n_{2}}$, as required.     
\end{proof}
\noindent
Most results in this paper seem to go through with the restriction to SC2-spaces in place.  We note that the previous proof actually establishes the existence of a \emph{strong} countable base as in \cite{damurm}*{Def.\ 10.8.2}.  

\smallskip
\noindent
We finish this section with remarks on variations of the above results.  
\begin{rem}[Located sets]\rm
In second-order RM, a closed set $C$ is called \emph{located} if $d(x, C)=\inf_{y\in C}d(x, y)$ exists as a (code for a) continuous function.   
By \cite{browner}*{Theorems~3.10 and 3.17}, $\ACA_{0}$ suffices to show that, formulated using codes closed sets are located in sequentially compact metric spaces. 
By contrast, one readily shows that $\BOOT$ follows from the statement that for any metric space $(M, d)$ with $M\subset \N^{\N}$, every closed $C\subset M$, there exists $\Phi:M\di \R$ such that  $\Phi(x)=d(x, C)$ for any $x\in M$.
Moreover, $\BOOT_{k}$ follows from the existence of a distance function in $\NN$ of the form $\lambda x, y.d(x, C_{y})$ where $C_{y}=\{z\in M: (y, z) \in C \}$.
\end{rem}
\begin{rem}[Urysohn and Tietze]\rm
The `original' principle $\open$ from \cite{dagsamVII} is the statement that an open set $O\subset \R$ has an RM-code as in \cite{simpson2}*{II.5.6}. 
As proved in the former, $\open$ is equivalent to the associated versions of the Urysohn lemma and the Tietze extension theorem.  
In the context of compact metric spaces, it seems that the latter are (much) weaker than $\open_{0}$.  This is no surprise as continuous functions are not `elementary' objects 
in the absence of a countable dense subset, by the above.    
\end{rem}
In conclusion, basic statements about open sets in compact metric spaces, like $\open_{i}$ for $i=0,1 ,2$, imply $\BOOT+\QFAC^{0,1}$. 
The former merely express that open sets come with second-order codes as in the definition of SC1-space.  By contrast, Theorem \ref{zibk} implies that the restriction to SC2-spaces is mostly inconsequential.

\section{Beyond the Baire space}\label{zonggg}
\subsection{Introduction}
In this section, we study compact metric spaces $(M, d)$ with the restriction `$M\subset \N^{\N}$' from Section \ref{main} lifted, i.e.\ $M$ and $d:M^{2}\di \R$ can be fourth-order and beyond.
As we will see, the proofs from Section \ref{main} provide a template for the proofs in this section.  
The associated fourth- and higher-order theorems, like the supremum principle, are \emph{much} stronger than in the third-order case, but not as natural anymore, as discussed next in detail.  

\smallskip

\noindent
First of all, our motivation for dropping the restriction `$M\subset \N^{\N}$' is as follows. 
\begin{itemize}
\item For a compact metric space $(M, d)$, the associated compact metric space $C(M)$ of continuous $M\di \R$-functions, is a central topic of study.  
The space $C(M)$ evidently includes objects of higher type than $M$.
\item For any compact metric space $(M, d)$, the set $M$ has at most the cardinality of the continuum (\cite{buko}*{Theorem~3.13}), i.e.\ we do 
not increase the size of $M$ by dropping the restriction `$M\subset \N^{\N}$'.  
\item The intuitive idea of metric spaces is that one studies `any set with a notion of distance'.  No restriction on the type of the objects involved is assumed.  
Moreover, the Lebesgue spaces $L^{p}$ or Skorokhod space $\mathcal{D}$ consist of possibly discontinuous functions.  
\end{itemize}
Despite these observations, we believe that compact metric spaces $(M, d)$ with $M$ containing all $\N^{\N}\di \N^{\N}$-mappings are much less natural than
the spaces from Section~\ref{main} for which $M\subset \N^{\N}$, or the spaces mentioned in the above items.

\smallskip

Secondly, we sometimes use type-theoretic notation, like $n^{0}$ for natural numbers $n\in \N$, $f^{1}$ for elements of Baire space $f\in \N^{\N}$, $Y^{1\di 1}$ for $\N^{\N}\di \N^{\N}$-mappings, and $Z^{3}$ for mappings from $\N^{\N}\di \N$ to $\N$.  The system $\RCAo$ is fundamentally based on the language of all finite types, i.e.\ nothing new has to be introduced.  Moreover, $\RCAo$ only has very basic axioms governing fourth-order and higher objects.  We shall use $\NNN$ to (symbolically) denote the collection of all $\N^{\N}\di \N^{\N}$-mappings.    

\smallskip

Thirdly, with the above notation, we can consider following generalisation of Feferman's projection principle. 
\begin{princ}[$\BOOT_{1}^{2}$]\label{fren}
$(\forall Z^{3})(\exists f^{1})(\forall n^{0}  )[ f(n)=0\asa (\exists Y\in \NNN)(Z(Y, n)=0) ].$
\end{princ}
We observe that $\Z_{2}^{\Omega}+\BOOT_{1}^{2}$ proves comprehension for $\Pi_{1}^{2}$-formulas. 
We shall study the following statements about {sequentially} compact metric spaces $(M, d)$ with $M\subset \NNN= \N^{\N}\di \N^{\N}$ and indicate generalisations to all finite types.
\begin{itemize}
\item[(a)] The supremum principle for continuous functions on $(M, d)$ is equivalent to $\BOOT_{1}^{2}$ (Section \ref{compr}) over a relatively strong base theory.
\item[(b)] The intermediate value theorem for chain connected $(M, d)$ is equivalent to the Axiom of countable\footnote{The axiom $\QFAC^{0,\NNN}$ is just $\QFAC^{0, 1\di 1}$ as follows: for a quantifier-free formula $\varphi$ with $(\forall n^{0})(\exists Y^{1\di 1})\varphi(Y, n)$, we have $(\exists (Y_{n})_{n\in \N})(\forall n^{0})\varphi(Y_{n}, n)$.} Choice as in $\QFAC^{0, \NNN}$  (Section~\ref{boundz}).
\end{itemize}
We often only sketch the proofs in this section as they are a straightforward modification of the proofs in Section~\ref{main}.
The previous results suggest another motivation for studying e.g.\ countable choice in higher-order RM: the resulting theorems are fine-grained and provide an elegant hierarchy.
%Finally

\subsection{Comprehension}\label{compr}
We show that the supremum principle for continuous functions on compact metric spaces $(M, d)$ with $M=\NNN$ as in Principle \ref{lipperz} is equivalent to $\BOOT^{2}_{1}$ (Principle~\ref{fren}).
Recall that $\NNN$ is the set of mappings from Baire space to itself, i.e.\ the following principle is fourth-order.  
\begin{princ}[Supremum Principle]\label{lipperz}
%For any metric $d:(\N^{\N}\times \N^{\N})\di \R$ and sequentially compact $(M, d)$ with $M\subset \N^{\N}$, any continuous $f:M\di [0,1]$ is bounded on $M$.
For any metric $d:\NNN^{2}\di \R$ and sequentially compact $(M, d)$ with $M=\NNN$, a continuous function $f:M\di [0,1]$ has a supremum as in: $(\forall x\in M)(\exists (y_{n})_{n\in \N})(\forall n\in \N)(y_{n}=\sup_{z\in \overline{B}_{d}^{M}(x, \frac{1}{2^{n}})}f(z))$.
\end{princ}
%We believe we cannot negotiate on the base theory of the following theorem. 
\begin{thm}[$\Z_{2}^{\Omega}$]\label{slabllll} The following are equivalent.
\begin{itemize}
\item The Supremum Principle \ref{lipperz} restricted to $f:M\di [0,1]$.   
%\item The Sequential Supremum Principle \ref{slipper} restricted to $f:M\di [0,1]$.   
\item The projection principle $\BOOT_{1}^{2}$.
\end{itemize}
We can restrict to Lipschitz continuity and replace `sequential' by `limit point'.  % the same for `countable' or `open cover' assuming $\SIND_{1\di 1}$.  % assuming $\QFAC^{0,1}$. 
%We can replace `separability' in the second item by `total boundedness', assuming additionally $\QFAC^{0,1}$???  
\end{thm}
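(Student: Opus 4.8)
The plan is to transpose the proof of Theorem~\ref{slab} one type level upward, replacing $G:\N^{\N}\di\N$ by a functional $\Gamma:\NNN\di\N$ and the distinguished point $00\dots$ by the constant map $\mathbf{0}\in\NNN$ sending every $g\in\N^{\N}$ to $00\dots$. First I would record, exactly as in the reduction $\BOOT\asa\RANGE$ from the proof of Theorem~\ref{NN} but lifted, that over $\Z_{2}^{\Omega}$ the principle $\BOOT_{1}^{2}$ is equivalent to the range form
\[
(\forall \Gamma:\NNN\di\N)(\exists f^{1})(\forall n\in\N)\big[\, f(n)=0\asa (\exists Y\in\NNN)(\Gamma(Y)=n)\,\big];
\]
the one new ingredient is a coding $\NNN\cong\N\times\NNN$ used to absorb the number argument of a given $Z^{3}$ into the functional argument of $\Gamma$.

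For the forward direction, fix $\Gamma:\NNN\di\N$ and define $d:\NNN^{2}\di\R$ verbatim as in the proof of Theorem~\ref{NN}: $d(Y,W):=|\tfrac{1}{2^{\Gamma(Y)}}-\tfrac{1}{2^{\Gamma(W)}}|$ for $Y,W\ne\mathbf{0}$, $d(\mathbf{0},Y)=d(Y,\mathbf{0}):=\tfrac{1}{2^{\Gamma(Y)}}$, and $d(\mathbf{0},\mathbf{0}):=0$, with $Y=_{M}W$ iff their $\Gamma$-values agree (or both equal $\mathbf{0}$). The metric axioms follow from the identical computation. The key point is that sequential and limit-point compactness of $(M,d)$ with $M=\NNN$ can be checked using only the numerical sequence $(\Gamma(Y_{n}))_{n}$ associated with a given sequence $(Y_{n})_{n}$: either some value recurs (or $\mathbf{0}$ recurs), giving a $=_{M}$-constant subsequence, or $\Gamma(Y_{n})\to\infty$ along a subsequence, which converges to $\mathbf{0}$; likewise any infinite $X\subset M$ realises infinitely many $\Gamma$-values and hence accumulates at $\mathbf{0}$. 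Setting $f(Y):=\tfrac{1}{2^{\Gamma(Y)}}$ for $Y\ne\mathbf{0}$ and $f(\mathbf{0}):=0$ gives a Lipschitz function, and applying Principle~\ref{lipperz} at $x_{0}=\mathbf{0}$ produces a sequence $(y_{n})_{n}$ with $y_{n}=\sup_{z\in\overline{B}_{d}^{M}(\mathbf{0},1/2^{n})}f(z)$. The arithmetical equivalence
\[
(\exists Y\in\NNN)(\Gamma(Y)=n)\asa\big[\, y_{n}=_{\R}\tfrac{1}{2^{n}}\ \vee\ \Gamma(\mathbf{0})=n\,\big]
\]
then yields the range form, hence $\BOOT_{1}^{2}$; since $f$ is Lipschitz and the space is limit-point compact, the restricted variants in the statement also follow.

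For the reverse direction, let $(M,d)$ with $M=\NNN$ be sequentially compact, $f:M\di[0,1]$ continuous, and $x_{0}\in M$. I would define $Z^{3}$ by $Z(z,\langle n,q\rangle)=0$ iff $z\in\overline{B}_{d}^{M}(x_{0},1/2^{n})$ and $f(z)>q$, a condition decided by $(\exists^{2})$ from the given functionals $d$ and $f$. Feeding $Z$ to $\BOOT_{1}^{2}$ returns the set $X=\{\langle n,q\rangle: (\exists z\in\NNN)(Z(z,\langle n,q\rangle)=0)\}$, which for each $n$ is exactly the lower cut $\{q\in\Q: \sup_{\overline{B}(x_{0},1/2^{n})}f>q\}$; because $f\le 1$ this cut determines a genuine real $y_{n}\in[0,1]$, and $(\exists^{2})$ reads off the sequence $(y_{n})_{n}$ uniformly from $X$, as required by Principle~\ref{lipperz}.

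The main obstacle is staying inside $\Z_{2}^{\Omega}$, which does not decide type-$2$ existential quantifiers: one must ensure that every use of $(\exists Y\in\NNN)(\dots)$ is either discharged by $\BOOT_{1}^{2}$ or by Principle~\ref{lipperz} itself, or else replaced by reasoning about the \emph{numerical} projection $(\Gamma(Y_{n}))_{n}$, which is why the compactness verification above is phrased entirely in terms of $\Gamma$-values rather than by enumerating points. This also explains why the statement only licences `limit point' and not `countable' or `open cover': the latter two would require enumerating the finitely many $=_{M}$-classes lying outside a ball about $\mathbf{0}$ as an actual finite sequence, i.e.\ the higher analogue of the $\SIND$ step in Theorem~\ref{slab}, which is not freely available at this type. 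A secondary point worth stressing is the cardinality subtlety: the point set $\NNN$ has cardinality $2^{\mathfrak{c}}$, yet $(M,d)$ is a bona fide compact metric space because $=_{M}$ collapses it to the convergent sequence $\{1/2^{k}:k\in\N\}\cup\{0\}$, so nothing in the construction genuinely exceeds the continuum.
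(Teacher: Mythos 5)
Your proposal is correct and takes essentially the same route as the paper's proof: the same reduction of $\BOOT_{1}^{2}$ to a range form $\RANGE_{2}$ by coding $\N\times\NNN$ into $\NNN$, the same lifted metric with distinguished point $o$ (the paper's name for your $\mathbf{0}$), the same Lipschitz function $Y\mapsto \frac{1}{2^{G(Y)}}$ with the supremum sequence at $o$ yielding the arithmetical equivalence, and the same application of $\BOOT_{1}^{2}$ to a lower cut for the converse. The only cosmetic differences are that the paper explicitly invokes $(\exists^{3})$ (available in $\Z_{2}^{\Omega}$) for the case distinctions defining $\NNN_{0}$, $=_{M}$ and $F$, which your construction uses implicitly, and that your rational-cut formulation of the reverse direction follows the third-order proof of Theorem \ref{slab} rather than the paper's dyadic thresholds.
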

\begin{proof}
The principle $\BOOT_{1}^{2}$ is equivalent to $\RANGE_{2}$ as follows:
\be\label{myhunter}\tag{$\RANGE_{2}$}
(\forall G:\NNN\di \N)(\exists X\subset \N)(\forall n \in \N)\big[n\in X\asa (\exists Y\in \NNN)(G(Y)=n)  ].
\ee
Indeed, for the reverse direction, fix $Z: (\NNN\times \N)\di \N$ and define $G:\NNN\di \N$ as follows for $n^{0}$ and $Y^{1\di 1}$: put $G( Y )=n+1$ if $Z(\lambda f.\lambda m.Y(f)(2m+1), n)=0$ for $2n=Y(00\dots)(0)$, and $0$ otherwise. 
Let $X\subseteq \N$ be as in $\RANGE_{2}$ and note that 
\[
(\forall m \geq 1 )( m\in X \asa (\exists Y\in \NNN)(G(Y)=m)\asa (\exists W\in \NNN)(Z(W, m-1)=0)  ),
\]
which is as required for $\BOOT_{1}^{2}$ after some trivial modification. 

\smallskip

Now fix $G:\NNN\di \N$, let $o^{1\di 1}$ be such that $o(f)=00\dots$ for all $f^{1}$, and define $\NNN_{0}:= \NNN\setminus \{o \}$ using $(\exists^{3})$.  Define $d:\NNN^{2}\di \R$ as $d(Y, W):=|\frac{1}{2^{G(Y)}}-\frac{1}{2^{G(W)}}|$ for $Y, W \in \NNN_{0}$, $d(o, Y)=d( Y,o):= \frac{1}{2^{G(Y)}} $ for $Y\in \NNN_{0}$, and $d(o, o):=0$. 
Then $(M, d)$ is a metric space for $M=\NNN$ if we define the equivalence relation `$=_{M}$' as follows:
\[
Y=_{M}W\equiv\big[ [Y, W\in \NNN_{0} \wedge G(W)=G(Y)] \vee Y=_{1\di 1}W=_{1\di 1}o\big], 
\]
using $(\exists^{3})$.
As in the proof of Theorem \ref{slab}, $(M, d)$ is a sequentially/limit point compact metric space.  
Again using $(\exists^{3})$, define $F:M\di \R$ as $F(Y)=\frac{1}{2^{G(Y)}}$ for $Y\in \NNN_{0}$ and $F(o)=0$, which is Lipschitz continuous as in the proof of Theorem~\ref{slab}.   
For $x_{0}=o$, let $(y_{n})_{n\in \N}$ be a sequence such that $y_{n}=\sup_{z\in \overline{B}_{d}^{M}(x_{0}, \frac{1}{2^{n}})}F(z)$ as provided by Principle \ref{lipperz}. 
By definition, we have for any $n\in \N$ that
\be\label{soft2}\textstyle
(\exists Y\in \NNN)(G(Y)=n)\asa [y_{n}=_{\R}\frac{1}{2^{n}} \vee G(o)=n],
\ee
and $\RANGE_{2}$ follows as the right-hand side of \eqref{soft2} is arithmetical.   

\smallskip

Secondly, %For the downward implication, note that a continuous function $f:M\di \R$ on sequentially compact $(M, d)$ is bounded by Theorem \ref{zirf}, say by $1$ for simplicity.
 fix a continuous $F:M\di [0,1]$ and $x_{0}\in M$ and use $\BOOT_{1}^{2}$ to obtain $X_{0}\subset \N^{2}$ such that for all $n, k\in \N$
\[\textstyle
(n, k)\in X_{0}\asa (\exists x\in B(x_{0}, \frac{1}{2^{n}}))(F(x)>1-\frac{1}{2^{k}})
\]
Clearly, the set $X_{0}$ allows one to define the supremum required by Principle \ref{lipperz}.  
%The modifications to obtain Principle \ref{slipper} are straightforward. 
\end{proof}
The previous proof amounts to the proof of Theorem \ref{slab} with all relevant types `bumped up by one'.
For instance, we could formulate the theorem for open-cover compact spaces assuming a suitable generalisation of $\SIND$.
Moreover, one could do the same for the other results in Section \ref{main} or even replace `$1\di 1$' by `$\sigma\di \sigma$', for any finite type $\sigma$.  
For instance, $\QFAC^{0, \NNN}$ is equivalent to the statement that continuous functions are bounded on sequentially compact $(M, d)$ with $M\subset \NNN$.   
We do not prove the latter result but obtain a nicer equivalence in Section \ref{boundz}.  We do point out that the existence of the supremum norm $\|f\|_{\infty}$ on the spaces from Theorem \ref{slabllll} is equivalent to Kleene's $(\exists^{4})$, which is $(\exists^{3})$ with all relevant types bumped up by one.  The same holds for all finite types \emph{mutatis mutandis}.  We stress that there may be more optimal\footnote{An anonymous referee has kindly suggested the following modification of the proof of Theorem~\ref{slabllll} that avoids $(\exists^{3})$ : instead of only mapping $o^{1\di 1}$ to $0$, one maps all $Y$ with $Y(0...)(0) = 0$ to $0$;  
instead of mapping $Y$ that are different from $o$ to $\frac{1}{2^{G(Y)}}$, one maps all $Y$ with $Y(0...)(0) \ne 0$ to $\frac{1}{2^{G(Z)}}$ for $Z(f)(n) := Y(f)(n+1)$.
} proofs of e.g.\ Theorem \ref{slabllll}.

\subsection{Axiom of countable Choice}\label{boundz}
We show that basic statements about sequentially compact metric spaces $(M, d)$ with $M\subset \NNN$ are equivalent to $\QFAC^{0, \NNN}$.  
This includes the intermediate value theorem as in Principle \ref{IVT}.  While equivalences for $\QFAC^{0,1}$ are not unheard of, the results in 
this section are entirely new.    

\smallskip

First of all, we use the historical definition of connectedness, due to Cantor and Jordan (\cite{wilders}), as in the second item of Definition \ref{chainc}.  
\bdefi[Connectedness]\label{chainc}~
\begin{itemize}
\item A metric space $(M, d)$ is \emph{connected} in case $M$ is not the disjoint union of two non-empty open sets.  
\item A metric space $(M, d)$ is \emph{chain connected} in case for any $w, v\in M$ and $\eps>0$, there is a sequence $w=x_{0},x_{1}, \dots, x_{n-1},{x_{n}}=v\in M$ such that for all $i< n$ we have $d(x_{i}, x_{i+1})<\eps$. 
\end{itemize}
\edefi
\noindent
It is well-known that the items in Def.\ \ref{chainc} are equivalent for compact metric spaces. 
\begin{princ}\label{IVT}
Let $(M, d)$ be a sequentially compact and chain connected metric space with $M\subset \NNN$ and let $f:M\di \R$ be continuous.
If $f(w)<c<f(v)$ for some $v, w\in M$ and $c\in \R$, then there is $u\in M$ with $f(u)=c$.  
\end{princ}
We observe that the base theory in the following theorem is much weaker than in Theorem \ref{slabllll}.
\begin{thm}[$\ACAo$]\label{NNNZ}
The following statements are equivalent.
\begin{enumerate}
\renewcommand{\theenumi}{\alph{enumi}}
\item The axiom of countable choice as in $\QFAC^{0,1\di 1}$.
\item The combination of the following.
\begin{itemize} 
\item[(b.1)] The axiom $\QFAC^{0,1\di 1}_{\fin}$: for quantifier-free $\varphi$ with $(\forall n^{0})(\exists Y^{1\di 1})\varphi(Y, n)$, we have $(\forall k^{0})(\exists (Y_{n})_{n\in \N})(\forall n\leq k)\varphi(Y_{n}, n)$.
%\item For any $M\subseteq [0,1]$ forming a metric space $(M, | \cdot |_{\R})$, sequential compactness implies separability. 
%\item For any $M\subseteq [0,1]$ and metric space $(M, \| \cdot \|_{\R})$, countable compactness implies separability.  
\item[(b.2)] For any metric $d:\NNN^{2}\di \R$ and metric space $(M, d)$ with $M\subset \mathcal{N}$, sequential compactness implies boundedness. 
%\item For any metric $d:(\N^{\N}\times \N^{\N})\di \R$ and metric space $(M, d)$ with $M\subset \N^{\N}$, sequential compactness implies total boundedness. 
\end{itemize}
\item The combination of the following.
\begin{itemize} 
\item[(c.1)] The axiom $\QFAC^{0,1\di 1}_{\fin}$ from item \textup{(b.1)}.
% for quantifier-free $\varphi$ such that $(\forall n\in \N)(\exists f\in \N^{\N})\varphi(f, n)$, we have $(\forall k\in  \N)(\exists (f_{n})_{n\in \N})(\forall n\leq k)\varphi(f_{n}, n)$.
%\item For any $M\subseteq [0,1]$ forming a metric space $(M, | \cdot |_{\R})$, sequential compactness implies separability. 
%\item For any $M\subseteq [0,1]$ and metric space $(M, \| \cdot \|_{\R})$, countable compactness implies separability.  
\item[(c.2)] For any metric $d:\NNN^{2}\di \R$ and metric space $(M, d)$ with $M\subset\mathcal{N}$, sequential compactness implies countable compactness. 
%\item For any metric $d:(\N^{\N}\times \N^{\N})\di \R$ and metric space $(M, d)$ with $M\subset \N^{\N}$, sequential compactness implies total boundedness. 
\end{itemize}
\item The combination of the following.
\begin{itemize} 
\item[(d.1)] The axiom $\QFAC^{0,1\di 1}_{\fin}$ from item \textup{(b.1)}.
% for quantifier-free $\varphi$ such that $(\forall n\in \N)(\exists f\in \N^{\N})\varphi(f, n)$, we have $(\forall k\in  \N)(\exists (f_{n})_{n\in \N})(\forall n\leq k)\varphi(f_{n}, n)$.
%\item For any $M\subseteq [0,1]$ forming a metric space $(M, | \cdot |_{\R})$, sequential compactness implies separability. 
%\item For any $M\subseteq [0,1]$ and metric space $(M, \| \cdot \|_{\R})$, countable compactness implies separability.  
\item[(d.2)] The intermediate value theorem as in Principle \ref{IVT}.
\end{itemize}
\item The combination of the following.
\begin{itemize} 
\item[(e.1)] The axiom $\QFAC^{0,1\di 1}_{\fin}$ from item \textup{(b.1)}.
% for quantifier-free $\varphi$ such that $(\forall n\in \N)(\exists f\in \N^{\N})\varphi(f, n)$, we have $(\forall k\in  \N)(\exists (f_{n})_{n\in \N})(\forall n\leq k)\varphi(f_{n}, n)$.
%\item For any $M\subseteq [0,1]$ forming a metric space $(M, | \cdot |_{\R})$, sequential compactness implies separability. 
%\item For any $M\subseteq [0,1]$ and metric space $(M, \| \cdot \|_{\R})$, countable compactness implies separability.  
\item[(e.2)] For any metric $d:\NNN^{2}\di \R$ and sequentially compact metric space $(M, d)$ with $M\subset\mathcal{N}$, a \(Lipschitz\) continuous $F:M\di \R$ is bounded.
\end{itemize}
\end{enumerate}
%We can replace `sequential' by `limit point' or `countable' or `open cover'.  % assuming $\QFAC^{0,1}$.
%We can replace `separability' in the second item by `total boundedness', assuming additionally $\QFAC^{0,1}$???  
\end{thm}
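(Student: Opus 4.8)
The plan is to close the circle of equivalences by proving that $\QFAC^{0,1\di 1}$ (item (a)) implies each of the ``content'' items (b.2), (c.2), (d.2), (e.2), and conversely that each content item, together with the finite choice axiom $\QFAC^{0,1\di 1}_{\fin}$ listed in the corresponding (x.1), yields $\QFAC^{0,1\di 1}$. Since (a) trivially implies $\QFAC^{0,1\di 1}_{\fin}$, this suffices. The guiding principle throughout is that the proofs of Theorems \ref{NNZ} and \ref{zirf} go through once all relevant types are ``bumped up by one'', i.e.\ with $f^{1}\in \N^{\N}$ replaced by $Y^{1\di 1}\in \NNN$ and $\QFAC^{0,1}$ replaced by $\QFAC^{0,1\di 1}$.

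For the forward directions I would argue exactly as in the first paragraph of the proof of Theorem \ref{zirf}. Given sequentially compact $(M,d)$ with $M\subset \NNN$ and an unbounded (Lipschitz) continuous $F:M\di \R$, the statement $(\forall n\in \N)(\exists x\in M)(|F(x)|>n)$ together with $\QFAC^{0,1\di 1}$ yields a sequence $(x_{n})_{n\in \N}$ in $M\subset \NNN$; any convergent sub-sequence contradicts the continuity of $F$ at its limit, establishing (b.2) and (e.2), and (c.2) follows in the same way by applying $\QFAC^{0,1\di 1}$ to a countable open cover lacking a finite sub-cover. For the intermediate value theorem (d.2), chain connectedness provides, for each $n\in \N$, a finite $\frac{1}{2^{n}}$-chain from $w$ to $v$; as $f(w)<c<f(v)$, some adjacent pair $a_{n},b_{n}$ satisfies $d(a_{n},b_{n})<\frac{1}{2^{n}}$ and $f(a_{n})\leq c\leq f(b_{n})$. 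Applying $\QFAC^{0,1\di 1}$ assembles these into sequences $(a_{n})_{n\in \N},(b_{n})_{n\in \N}$; a convergent sub-sequence $a_{n_{k}}\to u$ forces $b_{n_{k}}\to u$ since $d(a_{n_{k}},b_{n_{k}})\to 0$, and continuity of $f$ at $u$ then gives $f(u)=c$.

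For the reverse directions of (b), (c) and (e) I would bump up the counterexample from the proof of Theorem \ref{NNZ}. Assuming $\QFAC^{0,1\di 1}$ fails, fix quantifier-free $\varphi$ with $(\forall n^{0})(\exists Y^{1\di 1})\varphi(Y,n)$ admitting no witnessing sequence, and let $M\subset \NNN$ consist of those $Y$ encoding a label $\ell=Y(00\dots)(0)$ together with a tuple $(Y_{0},\dots,Y_{\ell})$ satisfying $(\forall i\leq \ell)\varphi(Y_{i},i)$, with $d(Y,W):=|\ell(Y)-\ell(W)|$ and $Y=_{M}W$ iff $\ell(Y)=\ell(W)$. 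Here $\QFAC^{0,1\di 1}_{\fin}$ guarantees a point of each label, so $(M,d)$ is unbounded and not countably compact; meanwhile any sequence in $M$ whose labels are unbounded would, by reading off for each $i$ the $i$-th witness component of a member of label $\geq i$, define a witnessing sequence for $\varphi$, contradicting our assumption, so all labels are bounded and a constant sub-sequence exists, whence $(M,d)$ is sequentially compact. Every $M\di \R$-function is vacuously continuous on this label-metric, while $F(Y):=\ell(Y)$ is an unbounded isometry and hence Lipschitz; thus (b.2) and (e.2) fail, and the failure of countable compactness contradicts (c.2), so each of (b), (c), (e) implies $\QFAC^{0,1\di 1}$.

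The main obstacle is the reverse direction of (d), since the label-metric space above is totally disconnected and useless for a chain connected statement. Here I would instead build, from the same failing $\varphi$, a chain connected space in which the witness data is tied to the \emph{proximity to the skipped value} $c$. Concretely I would take $f$ with values in $[-1,1]$, set $c:=0$, $f(w):=-1$, $f(v):=1$, and arrange that any point whose $f$-value lies within $\frac{1}{2^{k}}$ of $0$ must encode a tuple $(Y_{0},\dots,Y_{k})$ with $(\forall i\leq k)\varphi(Y_{i},i)$, while no point has $f$-value exactly $0$. Then $\QFAC^{0,1\di 1}_{\fin}$ produces, for each fixed $k$, the finitely many witness-carrying points needed for a $\frac{1}{2^{k}}$-chain straddling $c$, so $(M,d)$ is chain connected; conversely any \emph{sequence} in $M$ whose $f$-values tend to $0$ would, by reading off its members' witness tuples, yield a witnessing sequence for $\varphi$, so no such sequence exists and $(M,d)$ is sequentially compact with $f$ continuous. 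Since $f(w)<0<f(v)$ yet no $u$ satisfies $f(u)=0$, the intermediate value theorem fails, giving $\QFAC^{0,1\di 1}$. The delicate points I expect to have to get exactly right are: making the metric separate distinct witness tuples (so that Definition \ref{donkc}(a) holds) while keeping each fixed level slice sufficiently compact that arbitrary sequences still admit convergent sub-sequences, and verifying that forming a frontier-approaching sequence genuinely requires full rather than merely finite choice.
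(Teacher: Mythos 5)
Your overall architecture matches the paper's: forward directions by applying $\QFAC^{0,1\di 1}$ to an unboundedness/covering/chain statement and extracting a convergent subsequence, reverse directions by building a counterexample space from a failing instance of choice. Your forward arguments, including the chain-straddling argument for (d.2), are exactly the paper's (which handles (b), (c), (e) "as in Theorems \ref{NNZ} and \ref{zirf}" and (d) by applying choice to chain connectedness and passing to a limit). Your label-metric space $d(Y,W)=|\ell(Y)-\ell(W)|$ likewise disposes of the reverse directions of (b), (c), (e) correctly.

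The gap is in the reverse direction of (d). Your inference "no sequence in $M$ has $f$-values tending to $0$, hence $(M,d)$ is sequentially compact" is incomplete: sequential compactness also requires that a sequence whose $f$-values tend to a \emph{nonzero} limit $p$ have a convergent subsequence, i.e.\ that $p$ be realized as the $f$-value of a point of $M$. Since your witness burden depends only on proximity to $0$, a sequence of points whose $f$-values converge to an irrational $p\neq 0$ carries a bounded witness burden, so it exists in $M$ (definably from a single fixed witness tuple), yet if the achieved $f$-values are only rationals---the natural discrete analogue of your label construction---such a sequence has no convergent subsequence and compactness fails. Repairing this forces the achieved values to exhaust $[-1,1]\setminus\{0\}$, with $\QFAC^{0,1\di 1}_{\fin}$ supplying the witness tuple of each limit point; this is precisely the "slice compactness" issue you flag but do not resolve. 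The paper sidesteps all of this with one simpler device, reused for every reverse direction: keep the witness burden tied to the label $n_{0}=Y(00\dots)(0)$ (so that, as in your (b)/(c)/(e) space, unboundedly many distinct labels would yield a witnessing sequence, and hence every sequence has a constant subsequence), but pull the metric back through an enumeration $(q_{n})_{n\in\N}$ of $\Q$, setting $d(Y,Z)=|q_{Y(00\dots)(0)}-q_{Z(00\dots)(0)}|$. Density of $\Q$ plus $\QFAC^{0,1\di 1}_{\fin}$ gives chain connectedness, and the Lipschitz function $F(Y)=\frac{1}{3}q_{Y(00\dots)(0)}$ has range inside the rationals, so it misses \emph{every} irrational intermediate value: no engineered gap at an approachable value is needed, and the same space simultaneously refutes (b.2), (c.2), (d.2) and (e.2).
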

\begin{proof}
First of all, to obtain item (a) from the other items, let $\varphi$ be quantifier-free such that $(\forall n^{0})(\exists Y\in \NNN)\varphi(Y,n)$ but there is no sequence witnessing the latter. 
Now define a metric space $(M, d)$ with $M\subset \NNN$ as follows: let $Y^{1\di 1}\in M$ in case $Y(00\dots )(0)=n_{0}$ and $(\forall i\leq n_{0})(\varphi(\lambda f.P(Y, i)(f), i))$ where 
\[
P(Y, i)(f):=g_{i} \textup{ in case $Y(f)=\langle m\rangle * g_{0}\oplus \dots\oplus g_{n_{0}} $}.
\]  
Let $(q_{n})_{n\in \N}$ be an enumeration of the rationals (without repetitions).  % define $d_{3}:M^{2}\di \R$ as follows: $d(w, v):= |q_{|w|}-q_{|v|}|$ for $w, v\in M_{0}$.
We put $Y=_{M}Z$ in case $Y(00\dots)(0)=Z(00\dots)(0)$ and define $d:\NNN^{2}\di \R$ as $d(Y, Z)=|q_{Y(00\dots)(0)}-q_{Z(00\dots)(0)}|$.  Then $(M, d)$ is readily seen to be an unbounded and not countably compact metric space, using $\QFAC^{0,1\di 1}_{\fin}$.  To show that $(M, d)$ is sequentially compact, let $(Y_{n})_{n\in \N}$ be a sequence in $M$.   
In case $(\forall n\in \N)(\exists m\in \N)(Y_{m}(00\dots)(0)>n)$, we obtain a sequence $(Z_{n})_{n\in \N}$ with $(\forall n\in \N)\varphi(Z_{n}, n)$, which contradicts our assumptions.  
Hence, we must have $(\exists n_{0}\in \N)(\forall m\in \N)(Y_{m}(0)\leq n_{0})$.  Now let $n_{1}\leq n_{0}$ be such that $Y_{m}(00\dots)(0)=n_{1}$ for infinitely many $m\in \N$.  
We thus obtain a sub-sequence of $(Y_{n})_{n\in \N}$ that is constant in $M$, and hence trivially convergent.    Thus, $(M, d)$ is a sequentially compact metric space that is unbounded and not countably compact, contradicting items (b) and (c); $\QFAC^{0,1\di 1}$ now follows from the latter.  

\smallskip

Regarding item (d), the chain connectedness of $(M, d)$ is proved using $\QFAC^{0, \NNN}_{\fin}$ as follows: fix $Y, Z\in M, \eps>0$ and consider $d(Y, Z)=|q_{Y(00\dots)(0)}-q_{Z(00\dots)(0)}|$.  
Let $q_{Y(00\dots)(0)}=r_{0},r_{1}, \dots, r_{k-1}, r_{k}=q_{Z(00\dots)(0)}\in \Q$ be a finite sequence such that $|r_{i}-r_{i+1}|<\eps$ for $i<k$.  Using $\QFAC^{0, \NNN}_{\fin}$, there are $Y_{i}\in M$ such that $q_{Y_{i}(00\dots)(0)}=r_{i}$ for $i<k$, and chain connectedness of $(M, d)$ follows. 
Now define $F:M\di \R$ by $F(Y)=\frac{1}{3}q_{Y(00\dots)(0)}$, which is Lipschitz continuous since for $Y, Z\in \NNN$:
\[\textstyle
|F(Y)-F(Z)|=|\frac{1}{3}q_{Y(00\dots)(0)}-\frac{1}{3}q_{Z(00\dots)(0)}|=\frac{1}{3}|q_{Y(00\dots)(0)}-q_{Z(00\dots)(0)}|\leq \frac{1}{3} d(Y, Z).
\]
However, the range of $F$ consists of rationals, i.e.\ it does not have the intermediate value property.  
This shows that item (d) implies $\QFAC^{0, \NNN}$.   Note that $F$ is also unbounded thanks to $\QFAC^{0, \NNN}_{\fin}$, i.e.\ item (e) also follows. 

\smallskip

For the remaining implactions, one uses the obvious proof-by-contradiction for items (b)-(c), and (e), namely as in the proofs of Theorems \ref{NNZ} and \ref{zirf}.  For item~(d), apply $\QFAC^{0, \NNN}$ to the definition of chain connectedness 
for $Y, Z\in M$ with $F(Y)<_{\R} z<_{\R} F(Z)$ for fixed $z\in \R$.  The resulting sequence readily yields a sequence with limit $W\in M$ such that $F(W)=_{\R}z$.
%For the equivalence between items (a) and (b), note that the prof of Theorem 
\end{proof}
With slight effort, one obtains the following corollary by `bumping down' all relevant types in the proof of the theorem.  
\begin{cor}[$\ACAo$] The following are equivalent. 
\begin{enumerate}
\item[(a)] The Axiom of countable Choice $\QFAC^{0,1}$.
\item[(b)] The combination of the following.
\begin{itemize} 
\item[(b.1)] The axiom $\QFAC^{0,1}_{\fin}$ from Theorem \ref{NNZ}.
\item[(b.2)] The intermediate value theorem as in Principle \ref{IVT} for $M\subset \N^{\N}$.
\end{itemize}
\end{enumerate}
\end{cor}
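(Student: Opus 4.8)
The plan is to obtain the corollary by running the equivalence between items (a) and (d) of Theorem \ref{NNNZ} with every type-$(1\di 1)$ object replaced by a type-$1$ object: wherever that proof evaluates some $Y\in \NNN$ at $00\dots$ and then at $0$ to read off a natural number, one instead reads off $f(0)$ directly for $f\in \N^{\N}$, and $\QFAC^{0,1\di 1}$ (resp.\ $\QFAC^{0,1\di 1}_{\fin}$) is replaced throughout by $\QFAC^{0,1}$ (resp.\ $\QFAC^{0,1}_{\fin}$). For the direction (a)$\di$(b), item (b.1) is immediate since $\QFAC^{0,1}$ trivially yields its finite fragment $\QFAC^{0,1}_{\fin}$. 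For item (b.2), i.e.\ Principle \ref{IVT} for $M\subset \N^{\N}$, let $(M,d)$ be sequentially compact and chain connected, let $f:M\di \R$ be continuous, and suppose $f(w)<c<f(v)$. For each $k\in \N$, chain connectedness with $\eps=\frac{1}{2^{k}}$ furnishes a finite chain from $w$ to $v$ whose successive points lie at distance $<\frac{1}{2^{k}}$; since the $f$-values start below $c$ and end above $c$, the chain contains a crossing pair, so $(\forall k)(\exists a,b\in M)(f(a)<c\leq f(b)\wedge d(a,b)<\frac{1}{2^{k}})$. Applying $\QFAC^{0,1}$ over $k$ (coding each pair as one element of $\N^{\N}$) produces sequences $(a_{k})_{k\in \N}$ and $(b_{k})_{k\in \N}$ in $M$; sequential compactness gives a sub-sequence $a_{k_{j}}\di u$, whence $b_{k_{j}}\di u$ because $d(a_{k_{j}},b_{k_{j}})\di 0$, and continuity forces $f(u)\leq c\leq f(u)$, i.e.\ $f(u)=c$.

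For the converse (b)$\di$(a), I would argue by contradiction, exactly as in the reverse implications of Theorems \ref{NNZ} and \ref{NNNZ}. Assume $\QFAC^{0,1}$ fails, so there is quantifier-free $\varphi$ with $(\forall n\in \N)(\exists f\in \N^{\N})\varphi(f,n)$ admitting no witnessing sequence. Take $M\subset \N^{\N}$ to be the set from the proof of Theorem \ref{NNZ}: put $f\in M$ when $f=\langle n_{0}\rangle*(g_{0}\oplus\dots\oplus g_{n_{0}})$ with $n_{0}=f(0)$ and $(\forall i\leq n_{0})\varphi(g_{i},i)$, and set $f=_{M}g$ iff $f(0)=g(0)$. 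Unlike Theorem \ref{NNZ}, equip $M$ with the \emph{rational} metric from the proof of item (d) of Theorem \ref{NNNZ}: fixing an injective enumeration $(q_{n})_{n\in \N}$ of $\Q$, let $d(f,g):=|q_{f(0)}-q_{g(0)}|$ and define $F:M\di \R$ by $F(f):=\frac{1}{3}q_{f(0)}$. Then $F$ is Lipschitz, since $|F(f)-F(g)|=\frac{1}{3}d(f,g)$.

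It remains to check that $(M,d)$ satisfies the hypotheses of the intermediate value theorem (b.2). Sequential compactness is verified verbatim as in Theorem \ref{NNZ}: for a sequence $(f_{n})_{n\in \N}$ in $M$, the case $(\forall n)(\exists m)(f_{m}(0)>n)$ lets $\exists^{2}$ extract from the encodings a witnessing sequence for $\varphi$, contradicting our assumption; hence the indices $f_{m}(0)$ are bounded and pigeonhole yields a constant, hence convergent, sub-sequence. Chain connectedness follows from the density of $\Q$ and $\QFAC^{0,1}_{\fin}$: given $f,g\in M$ and $\eps>0$, interpolate rationals $q_{f(0)}=r_{0},\dots,r_{k}=q_{g(0)}$ with $|r_{i}-r_{i+1}|<\eps$, and use $\QFAC^{0,1}_{\fin}$ for the largest index involved to realise each $r_{i}$ by an element of $M$, producing the required chain. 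Finally, $\QFAC^{0,1}_{\fin}$ realises every index, so the range of $F$ equals $\frac{1}{3}\Q$; choosing $w,v\in M$ with $F(w)<F(v)$ and an irrational $c$ between them shows $F$ fails the intermediate value property, contradicting (b.2). Thus $\QFAC^{0,1}$ holds.

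I expect the main obstacle to be bookkeeping rather than conceptual: one must ensure that each use of choice is genuinely at the level of $\QFAC^{0,1}$ or its finite fragment and not stronger, and that the arithmetical selections (the crossing pair in the forward direction, the witness extraction in the converse) are carried out by $\exists^{2}$/arithmetical comprehension available in $\ACAo$ rather than smuggling in full countable choice. The delicate point in the converse is that sequential compactness must be established \emph{before} invoking the intermediate value theorem, and it is precisely the failure of $\QFAC^{0,1}$ that forces the index set of any sequence in $M$ to be bounded; keeping this dependency straight, together with the switch to the rational (rather than integer) metric so that $F$ has rational range, is where care is needed.
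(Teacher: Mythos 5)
Your proposal is correct and takes essentially the same route as the paper: the paper obtains this corollary precisely by ``bumping down'' all relevant types in the proof of Theorem \ref{NNNZ}, which is exactly what you do — the same set $M$ built from $\varphi$-witnesses, the same rational metric $d(f,g)=|q_{f(0)}-q_{g(0)}|$ and Lipschitz $F(f)=\frac{1}{3}q_{f(0)}$ with range in $\Q$, the same use of $\QFAC^{0,1}_{\fin}$ for chain connectedness and of the failure of $\QFAC^{0,1}$ for sequential compactness. Your forward direction merely fills in the crossing-pair and sequential-compactness details that the paper's one-line sketch (``apply $\QFAC^{0,1}$ to the definition of chain connectedness\dots'') leaves implicit, and your uses of choice and of $\exists^{2}$ for arithmetical selections are in line with how the paper itself applies these tools over $\ACAo$.
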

The previous equivalences are merely examples.  
One can similarly obtain equivalences as in Theorem \ref{NNNZ} involving the following items for sequentially compact metric spaces $(M, d)$, some of which are obvious by now.  
\begin{itemize}
\item Chain connectedness implies connectedness for $(M, d)$.
\item The previous item with `connectedness' replaced by `has finitely many connected components' or `not totally disconnected/separated'.
\item Locally constant $M\di \R$-functions are constant.
\item Continuous $M\di \{0,1\}$-functions are constant.
\item The approximate intermediate value theorem, i.e.\ with the consequent weakened to $(\forall \eps>0)(\exists Y\in M)(|F(Y)|<\eps)$.
\item If $M$ is chain connected and $|M|\geq 2$, then it is uncountable (\cite{trekker}*{p.\ 82}).
\item The principles $\ADS$ and $\CAC$ from the RM zoo (see \cite{dsliceke}) generalised to third-order orderings. 
\end{itemize}
For instance, one readily splits $M$ from the proof of Theorem \ref{NNNZ} into two open disjoints sets using an irrational number.  
As a more challenging result, a version of Nadler's fixed point theorem (\cite{naaldendraad}) should be equivalent to $\QFAC^{0,1}$.  

\smallskip

Finally, we show that Theorem \ref{NNNZ} remains true if we restrict to well-known special classes of metric spaces, as follows.
\bdefi\label{zek}
A metric space $(M, d)$ is called: 
\begin{itemize}
\item \emph{convex} if for any $x, y\in M$ with $x\ne_{M}y$, there is $z\in M$ such that $d(x,y)=d(x, z)+d(z,y)$ \(\cite{kirk1}*{p.\ 554}\), 
\item \emph{UMP} if for any $x, y\in M$ with $x\ne_{M}y$, there is a unique $z\in M$ such that $d(x,z)=d(z, y)$ \(\cite{naadloos}*{p.\ 353}\),  
\item \emph{\(Gromov\) hyperbolic} if there is $\delta>0$ such that for all $x, y, z, w\in M$, we have
\[
{ (x,z)_{w}\geq \min \left((x,y)_{w},(y,z)_{w}\right)-\delta },
\]
where ${ (y,z)_{x}={\frac {1}{2}}\left(d(x,y)+d(x,z)-d(y,z)\right)}$ is the \emph{Gromov product} \(\cite{ghs}*{p.\ 191}\).
\item \emph{Hadamard or CAT$(0)$} if $(M, d)$ is complete and for $x, y\in M$, there is $m\in M$ such that for all $z\in M$:
\be\label{konz}\textstyle
{ d(z,m)^{2}+{d(x,y)^{2} \over 4}\leq {d(z,x)^{2}+d(z,y)^{2} \over 2}.}
\ee
If equality holds in \eqref{konz}, the space $(M, d)$ is called \emph{flat Hadamard}.
\end{itemize}
\edefi
\begin{thm}\label{vinal}
Theorem \ref{NNNZ} holds restricted to convex, UMP, hyperbolic, or flat Hadamard spaces.  
\end{thm}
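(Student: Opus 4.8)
The plan is to show that the \emph{single} metric space constructed in the proof of Theorem \ref{NNNZ} already lies in all four classes, so that the theorem follows by a verification rather than a new construction. Recall that in that proof the metric is $d(Y,Z)=|q_{Y(00\dots)(0)}-q_{Z(00\dots)(0)}|$, i.e.\ it is the pullback of the Euclidean metric on $\R$ along the value map $v:Y\mapsto q_{Y(00\dots)(0)}$. Since $(q_n)_{n\in\N}$ enumerates all of $\Q$ and, for each index $n$, the standing hypothesis $(\forall i)(\exists Y)\varphi(Y,i)$ together with $\QFAC^{0,1\di 1}_{\fin}$ produces a point of $M$ with $v$-value $q_n$ (only the finitely many witnesses for $i\le n$ are needed), the image $v[M]$ is all of $\Q$. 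Thus $(M,d)$ is isometric to $(\Q,|\cdot|)$, and the four geometric conditions become properties of this subspace of $\R$. The forward directions (a)$\Rightarrow$(b)--(e) are immediate, as restricting the hypotheses to a subclass of metric spaces only weakens the statements; so it suffices to check that $(M,d)$ lies in each class, after which the restricted hypotheses apply to it and the contradictions of Theorem \ref{NNNZ} (the space is unbounded, is not countably compact, and carries a Lipschitz $F$ whose rational range misses an intermediate value) go through verbatim, yielding $\QFAC^{0,1\di 1}$.

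I would verify the three ``metric'' conditions directly from the isometry with $\Q$. For (Menger) convexity, given $x\ne_{M}y$ any rational strictly between $v(x)$ and $v(y)$ is realised in $M$ by a point $z$ with $z\ne_{M}x,y$ and $d(x,y)=d(x,z)+d(z,y)$. For UMP, the unique real $t$ with $|v(x)-t|=|t-v(y)|$ is the rational midpoint $\tfrac{v(x)+v(y)}{2}$, which is realised in $M$ and is unique up to $=_{M}$. For (Gromov) hyperbolicity, the Gromov product of any four points depends only on their six pairwise distances, which are inherited from $\R$; since $\R$ satisfies the defining inequality with $\delta=0$, any $\delta>0$, say $\delta=1$, works for $(M,d)$.

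The main obstacle is the flat Hadamard case, since that class additionally demands completeness, whereas $\Q$ is incomplete. This is resolved by the observation that sequential compactness implies completeness by a choice-free argument already available in $\ACAo$: a Cauchy sequence with a convergent subsequence converges. Because Theorem \ref{NNNZ} establishes outright that $(M,d)$ is sequentially compact, $(M,d)$ is therefore complete, so no circularity arises from using this to place $(M,d)$ in the Hadamard class. Granting completeness, the equality form of \eqref{konz} is exactly the median (parallelogram) identity $d(z,m)^{2}+\tfrac{d(x,y)^{2}}{4}=\tfrac{d(z,x)^{2}+d(z,y)^{2}}{2}$, which holds in $\R$ for \emph{every} $z$ when $m$ is the midpoint of $x,y$; taking $m$ to be the $M$-point of value $\tfrac{v(x)+v(y)}{2}$ makes $(M,d)$ flat Hadamard. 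I would close by remarking that a single space thus simultaneously witnesses all four restricted equivalences.
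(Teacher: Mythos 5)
Your proposal is correct and takes essentially the same approach as the paper: the paper's proof likewise reduces everything to checking that the single space $(M,d)$ from the proof of Theorem \ref{NNNZ} satisfies all four properties of Definition \ref{zek}, realising the (rational) midpoints in $M$ via $\QFAC^{0,1\di 1}_{\fin}$. The only cosmetic differences are that the paper verifies hyperbolicity by a direct case distinction with $\delta=1$ (where you invoke the $0$-hyperbolicity of $\R$ inherited by subspaces) and establishes completeness by noting that, in the absence of the choice function, Cauchy sequences are eventually constant (where you instead derive completeness from the already-established sequential compactness); both are equivalent verifications of the same facts about the same space.
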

\begin{proof}
The theorem is established if we can show that the metric space $(M, d)$ from the first paragraph of the proof of Theorem \ref{NNNZ} satisfies the properties from Definition~\ref{zek}.
That this space is convex follows from the observation that for $Y, Z\in M$ with $Z\ne_{M}Y$, we have $q_{Y(00\dots)(0)}\ne q_{Z(00\dots)(0)}$. Now consider the midpoint $m=\frac{1}{2}( q_{Y(00\dots)(0)}+q_{z(00\dots)(0)} )$ and use $\QFAC^{0, 1\di 1}_{\fin}$ to find $W\in M$ with $q_{W(00\dots)(0)}=m$, i.e.\ we have 
\[
d(Z, Y)=|q_{Y(00\dots)(0)}-q_{Z(00\dots)(0)}|= |q_{Y(00\dots)(0)} -m| + |m- q_{Z(00\dots)(0)}| = d(Y, W)+d(W, Z)
\]
as required for convexity.  The midpoint $W$ is clearly unique and trivially $d(Y, W)=d(W, Z)$, i.e.\ we also obtain UMP.  
Moreover, one readily verifies that \eqref{konz} holds with equality for the midpoint $W$.  
To show that $(M, d)$ is hyperbolic, one proceeds via a tedious-but-straightforward case distinction for $\delta=1$.  
To show that $(M, d)$ is complete, one verifies that all Cauchy sequences are eventually constant in the absence of the choice function as in $\QFAC^{0, 1\di 1}$.  
\end{proof}

\smallskip

In conclusion, we have obtained equivalences for $\QFAC^{0, 1\di 1}$ involving the intermediate value theorem in Theorem \ref{NNNZ}.  
It is fairly straightforward to generalise these results for `$1\di 1$' replaced by `$\sigma\di \sigma$', for any finite type $\sigma$.

\begin{ack}\rm 
We thank the anonymous referee for providing the many helpful suggesting that improved this paper.  
Our research was supported by the \emph{Klaus Tschira Boost Fund} via the grant Projekt KT43.
The initial ideas for this paper were developed in my 2022 Habilitation thesis at TU Darmstadt (\cite{samhabil}) under the guidance of Ulrich Kohlenbach.  
The main ideas of this paper came to the fore during the \emph{Trends in Proof Theory} workshop in February 2024 at TU Vienna.  
Theorem \ref{vinal} was obtained during the \emph{Workshop on Proof mining} at TU Darmstadt in Sept.\ 2024. 
We express our gratitude towards all above persons and institutions.   
\end{ack}

%
%\section{Stuff}
%Existence of measure on compact metric space implies $(\exists^{3})$, $\BOOT$, or $\QFAC^{0,1}$???

\begin{bibdiv}
\begin{biblist}
\bibselect{allkeida}
\end{biblist}
\end{bibdiv}

\bye